\newcommand{\ignore}[1]{}
\newtheorem{theorem}{Theorem}[section]
\newtheorem{lemma}[theorem]{Lemma}
\newtheorem{corollary}[theorem]{Corollary}
\newtheorem{proposition}[theorem]{Proposition}
\newtheorem{algorithm}[theorem]{Algorithm}
\theoremstyle{definition}
\newtheorem{definition}[theorem]{Definition}
\newtheorem{example}[theorem]{Example}
\theoremstyle{remark}
\newtheorem{remark}[theorem]{Remark}
\numberwithin{equation}{section}
\newcommand{\bP}{{\mathbb{P}}}
\newcommand{\bC}{\mathbb{C}}
\newcommand{\bZ}{{\mathbb{Z}}}
\newcommand{\bQ}{{\mathbb{Q}}}
\definecolor{grey}{rgb}{0.75,0.75,0.75}
\definecolor{orange}{rgb}{1.0,0.5,0.5}
\definecolor{brown}{rgb}{0.5,0.25,0.0}
\definecolor{pink}{rgb}{1.0,0.5,0.5}
\newcommand{\adj}{\mathrm{Adj}}
\newcommand{\fM}{{\mathfrak m}}
\newcommand{\fa}{\mathfrak{a}}
\newcommand{\fb}{\mathfrak{b}}
\newcommand{\cO}{{\mathcal O}}
\newcommand{\lra}{{\longrightarrow}}
\newcommand{\A}{\mathfrak{a}}
\newcommand{\Z}{\mathbb{Z}}
\newcommand{\R}{\mathbb{R}}
\newcommand{\Oc}{\mathcal{O}}
\newcommand{\J}{\mathcal{J}}
\begin{document}

\title[Multiplier ideals in two-dimensional local rings]{Multiplier ideals in two-dimensional local rings with rational singularities }

\author[M. Alberich-Carrami\~nana]{Maria Alberich-Carrami\~nana}

\author[J. \`Alvarez Montaner]{Josep \`Alvarez Montaner}

\author[F. Dachs-Cadefau]{Ferran Dachs-Cadefau }

\address{Departament de Matem\`atiques\\
Universitat Polit\`ecnica de Catalunya\\ Av. Diagonal 647, Barcelona
08028, Spain} 
\email{Maria.Alberich@upc.edu, Josep.Alvarez@upc.edu}

\address{KU Leuven\\ Department of Mathematics \\ Celestijnenlaan 200B box 2400\\
BE-3001 Leuven, Belgium}

\email{Ferran.DachsCadefau@wis.kuleuven.be}

\thanks{All three authors were partially supported by Generalitat de Catalunya 2014 SGR-634 project  and
Spanish Ministerio de Econom\'ia y Competitividad
MTM2012-38122-C03-01/FEDER. FDC  is also supported by the
KU Leuven grant OT/11/069. MAC is also with the  Institut de Rob\`otica i Inform\`atica Industrial (CSIC-UPC) and the Barcelona Graduate School of Mathematics (BGSMath).}



\begin{abstract}
The aim of this paper is to study jumping numbers and
 multiplier ideals of any
ideal in a two-dimensional local ring with a rational singularity.
In particular we reveal which information encoded in a multiplier
ideal determines the next jumping number. This leads to an algorithm
to compute sequentially the jumping numbers and the whole chain of
multiplier ideals in any desired range. As a consequence of our
method we develop the notion of {\it jumping divisor} that allows to
describe the jump between two consecutive multiplier ideals. In
particular we find a unique minimal jumping divisor that is studied
extensively.
\end{abstract}

\maketitle

\section{Introduction}

Let $X$ be a complex algebraic variety with mild singularities and
$\cO_{X,O}$ the local ring of a point $O\in X$. To any ideal $\fa
\subseteq \cO_{X,O}$ one may associate a family of {\it multiplier
ideals} $\J(\A^\lambda)$ parametrized by positive rational numbers
$\lambda\in \bQ_{>0}$. Indeed, they form a nested sequence of ideals
$$\Oc_{X,O}\varsupsetneq\J(\A^{\lambda_1})\varsupsetneq\J(\A^{\lambda_2})
\varsupsetneq...\varsupsetneq\J(\A^{\lambda_i})\varsupsetneq...$$
and the rational numbers $0< \lambda_1 < \lambda_2 < \cdots$ where
the multiplier ideals change are called \emph{jumping numbers}. The
first jumping number $\lambda_1$ is also known as the {\it
log-canonical threshold}. Multiplier ideals and their associated
jumping numbers have proven to be a powerful tool to understand the
geometry of singularities. They are defined using a log-resolution
of the pair $(X,\fa)$. In fact, smaller or more dense jumping
numbers can be thought to correspond to ``worse`` singularities.

\vskip 2mm

The aim of this paper is to present a new approach to the
understanding of multiplier ideals and jumping numbers of any ideal
$\fa$ in the local ring $\cO_{X,O}$ of a complex surface $X$ having
at worst a rational singularity at $O$. This is a case, especially
when $X$ is smooth, that has received a lot of attention in recent
years because of the interesting properties these invariants satisfy
(see the works of Favre-Jonsson \cite{FJ04}, \cite{FJ05},
Lipman-Watanabe\cite{LW} or Tucker \cite{Tuc09}). This is also one
of the few cases where explicit computations have been done.

\vskip 2mm

For simple complete ideals or irreducible plane curves in a smooth
surface, J\"arviletho \cite{Jar11} and Naie \cite{Nai09} provide a
closed formula for the set of jumping numbers in terms of some
invariants of the singularity,  the {\it Zariski exponents}. To give
a closed formula for any general ideal is beyond the scope of this
work. A formula for the log-canonical threshold already becomes
quite complicated as one may see in the papers of Kuwata
\cite{Kuw99} and Galindo-Hernando-Monserrat \cite{GHM12}.

\vskip 2mm

For the case of any ideal in a surface with a rational singularity
we must refer to the work of Tucker \cite{Tuc10} where he gives a
simple algorithm to compute the set of jumping numbers. To such
purpose, he developed the notion of divisors that {\it (critically)
contribute}, building upon previous work of Smith-Thompson
\cite{ST07}.  We may interpret jumping numbers as being parametrized
by contributing divisors and critical divisors are more economic to
detect
 since the complete ideals they define are very close
to their corresponding multiplier ideal.  The algorithm he proposes
uses a characterization of critical divisors that allows them to be
found and consequently allows the corresponding jumping numbers to
be computed.

\vskip 2mm

A similar strategy is used by Hyry-J\"{a}rvilehto in \cite{HJ11}
where they proved that jumping numbers are parameterized by more
general complete ideals\footnote{Contributing divisors describe
complete ideals nested in between consecutive multiplier ideals. The
ideals considered in \cite{HJ11} are not necessarily nested.}.
Moreover, they provide a combinatorial criterion to detect a
suitable ideal and its corresponding jumping number.

\vskip 2mm

The aim of this paper is to understand the whole change between a
multiplier ideal to the next one, and to reveal what information
encoded in a multiplier ideal determines the next jumping number.
This is done in our main result Theorem \ref{lct_ideal} and it gives
rise to an algorithm to compute the ordered sequence of multiplier
ideals in any desired range of the real line. The algorithm avoids
considering candidates and computes sequentially at each step a
jumping number and its associated multiplier ideal. This new
algorithm improves in efficiency the computation of jumping numbers
when compared with Tucker's algorithm.

\vskip 2mm

Perhaps the most important contribution of our method lies in
finding a divisor, that we name the \emph{minimal jumping divisor},
tightly related to the aforementioned algorithm, which enables one to
obtain a multiplier ideal from the previous one, and vice versa.
This jumping divisor is studied, in particular its geometric
structure on the dual graph, and it is compared with the previously
known critically contributing divisors.

\vskip 2mm

The structure of this paper is as follows: In Section \S\ref{preli} we
introduce the basics of the theory of multiplier ideals and some of
the tools in the theory of singularities that we will need in the
rest of the paper. We pay special attention to the equivalence
between complete ideals and antinef divisors developed by Lipman in
\cite{Lip69} since this is the way we will present multiplier
ideals. In particular we provide a new method to compute the antinef
closure of any given divisor, generalizing previous versions of
Casas-Alvero \cite{Cas00} and Reguera \cite{Reg96}.

\vskip 2mm

In Section \S\ref{main_algorithm} we present the main result of this paper in Theorem
\ref{lct_ideal}. It gives a generalization of a well-known formula
for the log-canonical threshold and allows us to compute a jumping
number from the data given by the preceding a multiplier ideal. This
leads to the desired algorithm that computes sequentially the chain
of multiplier ideals.

\vskip 2mm

In Section \S\ref{jumping_divisor} we develop the theory of {\it jumping divisors} that
allows us to describe the whole jump between two consecutive
multiplier ideals. Quite surprisingly, the algorithm we develop in
Section \S\ref{main_algorithm} allows us to construct the unique {\it minimal jumping
divisor} associated to every jumping number. It is minimal in the
sense that no proper subdivisor gives the jump between consecutive
multiplier ideals. Moreover, we prove in Theorem \ref{invariance}
that  minimal jumping divisors are {\it generically} invariant with
respect to log-resolutions of the ideal and they satisfy some nice
geometric properties when viewed in the dual graph.

\vskip 2mm

Finally, in Section \S\ref{mini} we present the theory of jumping divisors in
a more general framework that we develop using the results of
Hyry-J\"{a}rvilehto \cite{HJ11} and their relation with the theory
of contributing divisors of Tucker  \cite{Tuc10}.  The main result
of this section is the fact that, among all the contributing
divisors associated to a jumping number that give the same ideal,
there is a minimal one. For example, critical divisors are of this
type. It turns out that these minimal contributing divisors are all
contained in the minimal jumping divisor and inherit the same
invariance property with respect to log-resolutions of the ideal.

\vskip 2mm

{\bf Acknowledgments:} We wish to thank V\'ictor Gonz\'alez Alonso for uncountable discussions
that we had with him during the realization of this work. The authors would also like to 
thank Pierrette Cassou-Nogu\' es and Wim Veys for the comments received and Manuel Gonz\'alez-Villa
for a careful reading of a previous version of the manuscript.


\section{Preliminaries} \label{preli}
Let $X$ be a normal surface and $O$ a point where $X$ has at worst a
rational singularity. That is, there exists a desingularization
$\pi: X' \rightarrow X$ such that the stalk at $O$  of the higher
direct image $R^1 \pi_* \cO_{X'}$ is zero. This property is then
satisfied for any desingularization. The theory of rational
singularities was introduced by Artin in \cite{Art66} and further
developed by Lipman in \cite{Lip69}. Another reference that we will
follow closely is  \cite{Reg96}.

\vskip 2mm

Let $\fa \subseteq \cO_{X}$ be an ideal sheaf. Denote $\fM =
\fM_{X,O} \subseteq \cO_{X,O}$ the maximal ideal of the local ring
$\cO_{X,O}$ at $O$. Throughout this work we will often consider the
case where $\fa \subseteq \fM$ is an $\fM$-primary ideal, which can
be identified with an ideal sheaf that equals $\cO_X$ outside the
point $O$ (we will use both languages interchangeably, depending on
the context). Recall that a {\em log-resolution} of the pair
$\left(X,\fa\right)$ (or of $\fa$, for short) is a proper birational
morphism $\pi: X' \rightarrow X$ such that
\begin{enumerate}
\item $X'$ is smooth,
\item the preimage of $\fa$ is locally principal, that is, $\fa\cdot\cO_{X'} = \cO_{X'}\left(-F\right)$
for some effective Cartier divisor $F$, and
\item $F+E$ is a divisor with simple normal crossings support where $E = Exc\left(\pi\right)$ is the exceptional locus.
\end{enumerate}

From now on, consider a given log-resolution of $\fa$. Since the
point $O$ has (at worst) a rational singularity, the exceptional
locus $E$ is a tree of smooth rational curves $E_1,\dots,E_r$.
Furthermore, the matrix of intersections $\left(E_i\cdot
E_j\right)_{1\leqslant i,j \leqslant r}$ is negative-definite.

\vskip 2mm

Let ${\rm Div}(X')$ be the group of integral divisors in $X'$, i.e.
divisors of the form $D = \sum_i d_i E_i$ where the $E_i$ are
pairwise different (non necessarily exceptional) prime divisors and
$d_i \in \bZ$. Among them, we will consider divisors in the lattice
$\Lambda:= \bZ E_1 \oplus \cdots \oplus \bZ E_r$ of exceptional
divisors and we will simply refer them as divisors with {\it
exceptional support}. Any divisor $D\in {\rm Div}(X')$ has  a
decomposition $D=D_{\rm exc} + D_{\rm aff}$ into its  {\it
exceptional} and  {\it affine} part\footnote{We follow the
terminology of Lipman-Watanabe \cite{LW}} according to its support.
Our main example is the divisor $F$ such that $\fa\cdot\cO_{X'} =
\cO_{X'}\left(-F\right)$. In this case we will denote its
exceptional and affine  part as
$$ F_{\rm exc}= \sum_{i=1}^r e_i E_i \hskip 5mm {\rm and} \hskip 5mm  F_{\rm aff}= \sum_{i=r+1}^s e_i E_i$$
where, by definition, the $e_i$ are non-negative integers. Whenever
$\fa$ is an $\fM$-primary ideal, the divisor $F$ is just supported
on the exceptional locus. i.e. $F=F_{\rm exc}$.

\vskip 2mm

\begin{remark}
 Let $C:f=0$ be a curve
defined by an element $f \in \cO_{X,O}$. The \emph{total transform}
of $C$ is the pull-back $\overline{C}:=\pi^*C$ and its \emph{strict
transform} $C'$ is the closure of $\pi^{-1}(C-\{O\})$. The total
transform has  a
 presentation $\overline{C}=C'+\overline{C}_{\rm exc}=C'+\sum d_iE_i$ where the weights $v_i(f):=d_i$ are
 the \emph{values}  of the curve $C$ at $E_i$.  Recall that $f\in \fa$ whenever ${C'}+\overline{C}_{\rm exc} \geqslant F$ and
 $f$ is {\it generic} in $\fa$  if  $\overline{C}_{\rm exc}=F_{\rm exc}$ and $C'- F_{\rm aff}$ has no singular points.
\end{remark}

\vskip 2mm

More generally, we will also consider $\bQ$-divisors in ${\rm
Div}_{\bQ}(X')= {\rm Div}(X')\otimes_{\bZ} \bQ$ or divisors in the
$\bQ$-vector space $\Lambda_\bQ:= \bQ E_1 \oplus \cdots \oplus \bQ
E_r$. The main example  will be the {\it relative canonical divisor}
$K_{\pi}$. Indeed, the definition of $K_{\pi}$ is quite subtle if
$O$ is singular, because at first sight one can only define a
canonical divisor $K_X$ of $X$ as a Weil divisor. Since rational
singularities are in particular $\bQ$-factorial, there exists a
positive integer $m$ such that $mK_X$ is Cartier, which can be
pulled back to $X'$ and allows to define $K_{\pi} = K_{X'} -
\frac{1}{m}\pi^*\left(mK_X\right)$. Alternatively,  $$ K_{\pi}=
\sum_{i=1}^r k_i E_i$$ is supported on the exceptional locus $E$,
and must satisfy
\begin{equation} \label{eq-relative-canonical}
\left(K_{\pi}+E_i\right)\cdot E_i = \left(\sum_{j=1}^r k_j E_j \cdot
E_i\right) + E_i^2 = -2
\end{equation}
for every exceptional component $E_i$ because of the adjunction
formula. This property indeed characterizes $K_{\pi}$ because the
intersection form on $E$ is negative-definite, and therefore the
system defined by equations (\ref{eq-relative-canonical}) has a
unique solution $\left(k_1,\ldots,k_r\right)$. However, the $k_i$
are not necessarily integral, and can even be negative. In the case
that $k_i > -1$ (resp. $k_i \geqslant -1$) for all $E_i$, one says
that $X$ has a {\it log-terminal singularity} (resp. {\it
log-canonical singularity}) at $O$.

\vskip 2mm

For any $\bQ$-divisor $D = \sum_i d_i E_i \in {\rm Div}_{\bQ}(X')$,
we denote its {\em round-down} and {\em round-up} as
$$\left\lfloor D \right\rfloor = \sum_i \left\lfloor d_i \right\rfloor E_i \quad \text{and} \quad \left\lceil D \right\rceil = \sum_i \left\lceil d_i \right\rceil E_i.$$
The {\em fractional part} of $D$ is then $\left\{D\right\} =
D-\left\lfloor D \right\rfloor = \sum_i \left\{d_i\right\} E_i$. In
the sequel we will denote the value of each component $E_i$ of $D$
as $v_{E_i}(D):=d_i$ . If no confusion arises, we will simply denote
the value of the components as $v_i(D):=v_{E_i}(D)$.

\vskip 2mm

\subsection{Dual graph}

The combinatorics of the log-resolution of $\fa$ can be encoded
using the so-called {\it dual graph}. This is a rooted tree where
the vertices represent the irreducible components $E_i\leqslant F$
and two vertices are joined by an edge if the corresponding divisors
intersect.

\vskip 2mm

Given any component $E_i$, we will denote by $\adj\left(E_i\right)$
the set of  components $E_j$, $j\neq i$, sharing an edge with $E_i$,
i.e. $E_i \cdot E_j = 1$, and by
$$a\left(E_i\right) = \#\adj\left(E_i\right) = E_i \cdot \left(F^{\rm red}-E_i\right)$$
the number of such components which is the {\it valence} of the
vertex representing $E_i$, where $F^{\rm red}$ denotes de reduced
divisor with the same support as $F$. An {\it end} of the dual graph
is nothing but a vertex with {valence} $1$, i.e. a vertex $E_i$ such
that $a\left(E_i\right)=1$. More generally, for any effective
subdivisor $D=E_{i_1}+\cdots+E_{i_m} \leqslant F$ we define
$$\adj_D\left(E_i\right) = \left\{E_j \leqslant D \hskip 2mm | \hskip 2mm E_i \cdot E_j = 1\right\}$$
and $a_D\left(E_i\right) = \#\adj_D\left(E_i\right)$. We denote by
$v_D = m$ (resp. $a_D$) the number of components of $D$ (resp. the
number of intersections between two components of $D$). Since the
dual graph is a tree it is clear that $$\sum_{E_i \leqslant
D}a_D\left(E_i\right) = 2 a_D$$ and that $v_D - a_D$ equals the
number of connected components of $D$. An end of the subgraph
associated to $D$ is a vertex with valence $1$ or $0$. The later
meaning that $E_i$ is an isolated component of $D$.

\vskip 2mm

For any exceptional component $E_i$, we define the {\em excess} (of
$\fa$) at $E_i$ as $\rho_i = - F \cdot E_i$. It can be interpreted
as the number of branches of the strict transform of a curve defined
by a generic element $f \in \fa$ that intersect the component $E_i$.
 Indeed, if its total transform is $\overline{C}=C'+F$ then $0 = \overline{C} \cdot E_i = C' \cdot E_i +
F \cdot E_i = C'\cdot E_i - \rho_i$, which proves the claim.

\vskip 2mm

There are two kinds of exceptional divisors that will play a special
role:
\begin{enumerate}
\item[$\bullet$] A component $E_i$ of $E$ is a {\em rupture} component if $a\left(E_i\right) \geqslant 3$,
that is, it intersects at least three more components of $E$
(different from $E_i$).

\item[$\bullet$] We say that $E_i$ is {\em dicritical} if $\rho_i > 0$. By \cite{Lip69},  dicritical components correspond to
{\it Rees valuations}.
\end{enumerate}

We also mention that non-exceptional components also correspond to
{\it Rees valuations}.

\subsection{Complete ideals and antinef divisors}\label{unloading}
Given an effective $\bQ$-divisor $D=\sum d_i E_i \in {\rm
Div}_{\bQ}(X')$  we may consider its associated (sheaf) ideal
$\pi_{\ast}\Oc_{X'}(-D):= \pi_{\ast}\Oc_{X'}(-\lceil D\rceil)$. Its
stalk at $O$ is
$$I_D:=\{ f\in \Oc_{X,O} \hskip 2mm | \hskip 2mm v_i(f)\geqslant \lceil d_i \rceil \hskip 2mm \text{for all} \hskip 2mm  E_i \leqslant D \}.$$
This is a complete ideal of $\Oc_{X,O}$ that is $\fM$-primary
whenever $D$ has exceptional support, i.e. $D\in \Lambda_\bQ$. Any
two divisors $D,D'\in {\rm Div}_{\bQ}(X')$ defining the same
complete ideal $\pi_{\ast}\Oc_{X'}(-D)= \pi_{\ast}\Oc_{X'}(-{D'})$
are called {\it equivalent divisors}.

\vskip 2mm

In the equivalence class of a given divisor one may find a unique
maximal representative. First, recall that an effective divisor with
integral coefficients $D \in {\rm Div}(X')$   is called
\emph{antinef} if $-D\cdot E_i \geqslant 0$,  for every exceptional
prime divisor $E_i$. It is worth to point out that the affine part
of  $D=D_{\rm exc} + D_{\rm aff}$ satisfies $ D_{\rm aff}\cdot E_i
\geqslant 0$. Therefore $D$ is antinef whenever $- D_{\rm exc}\cdot
E_i \geqslant D_{\rm aff}\cdot E_i$.

\vskip 2mm

In the work of Lipman (see \cite[\S 18]{Lip69}) one may find the
following correspondence that we will heavily use throughout this
work.

\begin{theorem}
There is a one to one correspondence between antinef divisors in
${\rm Div}(X')$ and complete ideals in $\Oc_{X,O}$.  In particular,
antinef divisors in $\Lambda$ correspond to $\fM$-primary complete
ideals.
\end{theorem}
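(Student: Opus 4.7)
The plan is to construct the two arrows of the purported bijection and verify they are mutually inverse. In one direction, to an antinef divisor $D \in {\rm Div}(X')$ I would associate the complete ideal $I_D = (\pi_\ast \cO_{X'}(-D))_O$ already introduced in \S\ref{preli}; it is integrally closed because $\cO_{X'}(-D)$ is locally principal (hence integrally closed as an $\cO_{X'}$-submodule) and the direct image under a proper morphism of an integrally closed coherent ideal sheaf remains integrally closed. In the opposite direction, given a complete ideal $I \subseteq \cO_{X,O}$, I would pick a log-resolution on which $I \cdot \cO_{X'} = \cO_{X'}(-F)$ for a uniquely determined effective Cartier divisor $F$, and set $I \mapsto F$.

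The first item to verify is that the divisor $F$ just produced is automatically antinef. The invertible sheaf $\cO_{X'}(-F) = I \cdot \cO_{X'}$ is the image of the surjection $\pi^{-1}I \otimes \cO_{X'} \twoheadrightarrow I \cdot \cO_{X'}$, hence it is globally generated by pullbacks of sections of $I$. Since each exceptional $E_i$ is isomorphic to $\bP^1$, the restriction of a globally generated line bundle to $E_i$ has non-negative degree, yielding $-F \cdot E_i \geqslant 0$.

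Next I would check that the composition $I \mapsto F \mapsto I_F$ is the identity on complete ideals. The inclusion $I \subseteq \pi_\ast(I \cdot \cO_{X'}) = I_F$ is automatic, while the reverse containment relies on the Rees--Zariski description of integral closure $\overline{I} = \pi_\ast(I \cdot \cO_{X'})$, valid for any proper birational morphism rendering $I \cdot \cO_{X'}$ invertible; combined with $I = \overline{I}$, this gives $I_F = I$.

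The main obstacle lies in the reverse composition: if $D$ is antinef, one must show that the divisor attached to $I_D$ is again $D$, or equivalently that $I_D \cdot \cO_{X'} = \cO_{X'}(-D)$. This is where the rational singularity hypothesis becomes essential. Following Lipman \cite[\S18]{Lip69}, the strategy is to establish the vanishing $R^1\pi_\ast \cO_{X'}(-D) = 0$ for every antinef $D$. Through the long exact sequence attached to $0 \to \cO_{X'}(-D) \to \cO_{X'} \to \cO_D \to 0$, this vanishing is equivalent to the surjectivity of $\pi_\ast \cO_{X'} \twoheadrightarrow \pi_\ast \cO_D$, which one proves by induction on the number of components of $D$ using the antinef condition $-D\cdot E_i \geqslant 0$, the exact sequences of structure sheaves obtained by peeling off one component at a time, and Artin's characterization of rational singularities via the vanishing $H^1(\cO_Z)=0$ for every effective exceptional $Z$. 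Once the vanishing is in hand, the evaluation map $\pi^\ast\pi_\ast \cO_{X'}(-D) \to \cO_{X'}(-D)$ is surjective on a neighbourhood of $\pi^{-1}(O)$, which is precisely the statement that $I_D$ generates $\cO_{X'}(-D)$, and therefore $D$ is recovered from $I_D$. The final assertion of the theorem is then clear: for $D \in \Lambda$ the cosupport of $I_D$ is contained in $\{O\}$, so $I_D$ is $\fM$-primary, and conversely any $\fM$-primary complete ideal is locally resolved by a divisor supported on the exceptional locus.
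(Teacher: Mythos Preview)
The paper does not supply its own proof of this theorem: it is stated in the preliminaries and attributed to Lipman \cite[\S18]{Lip69}. Your sketch is a faithful outline of the standard argument found there---construct the two maps, use global generation of $I\cdot\cO_{X'}$ to see that the associated divisor is antinef, use the valuative description of integral closure for $I\mapsto F\mapsto I_F=\overline I=I$, and invoke the vanishing $R^1\pi_\ast\cO_{X'}(-D)=0$ (the heart of Lipman's \S18) to obtain surjectivity of $\pi^\ast\pi_\ast\cO_{X'}(-D)\to\cO_{X'}(-D)$ for the reverse composition. So there is nothing to compare: you have reconstructed precisely the argument the paper cites.

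One small caveat worth tightening. The theorem, as stated, fixes the resolution $\pi:X'\to X$, so the inverse map must send a complete ideal $I$ to the divisor $F$ with $I\cdot\cO_{X'}=\cO_{X'}(-F)$ on \emph{that} $X'$. Your phrase ``pick a log-resolution'' reads as though you are free to pass to a further blow-up, which would take you out of ${\rm Div}(X')$. For the bijection on a fixed $X'$ one needs $I\cdot\cO_{X'}$ to already be invertible; the paper is tacitly restricting to complete ideals resolved by $\pi$ (equivalently, in the $\fM$-primary case, to those whose base points lie among the centres already blown up). With that understood, your $R^1$-vanishing step gives exactly the surjectivity onto this class.
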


In order to find the representative in the equivalence class of a
given divisor $D\in {\rm Div}_{\bQ}(X')$ we will consider its
so-called {\it antinef closure} $\widetilde{D}$. The existence of
such divisor is a consequence of the following results that can be
found in \cite[\S 18]{Lip69}, but we also refer to \cite{Tuc09} and
\cite{LW} for more insight.

\begin{lemma} \label{def_antinef}
For any effective $\bQ$-divisor $D\in {\rm Div}_{\bQ}(X')$ there
exists a unique minimal integral antinef divisor $\widetilde{D} \in
{\rm Div}(X')$ satisfying $\widetilde{D}\geqslant D$ that is called
the \emph{antinef closure} of $D$. In particular, any antinef
divisor $D'$ such that $D'\geqslant D$ must satisfy $D'\geqslant
\widetilde{D}\geqslant D$
\end{lemma}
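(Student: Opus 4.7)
The plan is to reduce to the integral case, establish existence of some integral antinef divisor dominating $D$, and then extract the minimum by a short well-ordering argument. First, since any integral divisor $\widetilde{D}$ with $\widetilde{D}\geqslant D$ must satisfy $\widetilde{D}\geqslant \lceil D\rceil$, I may replace $D$ by $\lceil D\rceil$ and assume $D\in{\rm Div}(X')$ is effective and integral. Let
$$\mathcal{S} \ =\ \{\, D' \in {\rm Div}(X') \mid D' \geqslant D,\ D' \text{ effective and antinef}\,\},$$
so the goal is to prove that $\mathcal{S}$ admits a unique coefficientwise minimum.

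For the existence of an element in $\mathcal{S}$, I would exploit the negative-definiteness of the exceptional intersection form. By a standard $M$-matrix argument applied to $-(E_i\cdot E_j)_{ij}$---which has positive diagonal, non-positive off-diagonal entries and is positive definite---there is an exceptional divisor $Z=\sum_i z_iE_i$ with every $z_i>0$ and $Z\cdot E_i<0$ for every exceptional $E_i$. After clearing denominators I may assume $Z$ is integral, and then for every sufficiently large integer $\lambda$ one has $D+\lambda Z\in\mathcal{S}$, since $(D+\lambda Z)\cdot E_i = D\cdot E_i-\lambda\,|Z\cdot E_i|\leqslant 0$ for each exceptional $E_i$.

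The crucial step is that $\mathcal{S}$ is closed under coefficientwise minimum. Given $D_1,D_2\in\mathcal{S}$, put $D'=\min(D_1,D_2)$; clearly $D'$ is effective and $D'\geqslant D$. To check antinef-ness, fix an exceptional component $E_i$ and, without loss of generality, assume $v_i(D')=v_i(D_1)$, so that the effective divisor $A:=D_1-D'$ does not contain $E_i$ in its support. Since every intersection $E_j\cdot E_i$ with $j\ne i$ is non-negative, $A\cdot E_i\geqslant 0$, and hence
$$D'\cdot E_i \ =\ D_1\cdot E_i - A\cdot E_i \ \leqslant\ 0.$$

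To finish, fix any $D_0\in\mathcal{S}$ and let $\mathcal{S}'=\{D'\in\mathcal{S}\mid D'\leqslant D_0\}$. This set is nonempty, finite (each coefficient of a member $D'$ is an integer bounded below and above by the corresponding coefficients of $D$ and $D_0$) and closed under minimum by the previous step, so its coefficientwise minimum $\widetilde{D}$ itself lies in $\mathcal{S}'$. For an arbitrary $D'\in\mathcal{S}$ one has $\min(D',D_0)\in\mathcal{S}'$, whence $\min(D',D_0)\geqslant\widetilde{D}$ and therefore $D'\geqslant\widetilde{D}$. The step requiring the most care is the initial existence of an antinef divisor above $D$: the $M$-matrix argument needs the connectedness of the exceptional tree granted by the rational singularity hypothesis in order to conclude that $z_i>0$ for every $i$; once that is in hand, the closure under minimum and the finiteness argument are essentially formal.
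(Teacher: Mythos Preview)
Your argument is correct. The paper itself does not prove this lemma but cites it from Lipman \cite[\S18]{Lip69} (see also \cite{Tuc09}, \cite{LW}); your proof---existence of an antinef majorant via the negative-definite (hence $M$-matrix) exceptional intersection form, followed by closure of the set of antinef majorants under componentwise minimum and a finiteness argument---is essentially the standard argument in that literature. One minor point: the connectedness of the exceptional locus you invoke is a consequence of Zariski's main theorem (normality of $X$) rather than of the rational-singularity hypothesis per se, and in any event the $M$-matrix step could be carried out on each connected component separately, so this is not a genuine obstacle. Note also that the paper's later unloading procedure, while it constructs $\widetilde{D}$ algorithmically, \emph{presupposes} this lemma in its termination proof (it bounds the unloading sequence above by $\widetilde{D}$), so it does not furnish an independent alternative argument.
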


\begin{proposition}\label{equiv_antinef}
An effective $\bQ$-divisor $D\in {\rm Div}_{\bQ}(X')$ and its
antinef closure $\widetilde{D}\in {\rm Div}(X')$ are equivalent,
i.e. $$\pi_{\ast}\Oc_{X'}(-D)= \pi_{\ast}\Oc_{X'}(-\widetilde{D}).
$$
\end{proposition}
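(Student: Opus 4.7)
The plan is to verify equality of the sheaf ideals $\pi_{\ast}\Oc_{X'}(-D)$ and $\pi_{\ast}\Oc_{X'}(-\widetilde{D})$ by comparing stalks at $O$; that is, to show that the complete ideals $I_D$ and $I_{\widetilde{D}}$ of $\Oc_{X,O}$ coincide. The inclusion $I_{\widetilde{D}}\subseteq I_D$ is essentially tautological: since $\widetilde{D}\geqslant D$, the support of $D$ is contained in that of $\widetilde{D}$, and the defining valuation conditions for $I_{\widetilde{D}}$ are at least as stringent as those defining $I_D$ on the components of $D$. So the whole content of the statement lies in the reverse inclusion $I_D \subseteq I_{\widetilde{D}}$.

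For that inclusion I would pick an arbitrary nonzero $f\in I_D$ and consider the curve $C_f:f=0$ together with its total transform $\overline{C_f} = \pi^{\ast}C_f$ on $X'$. Decompose this total transform as $\overline{C_f} = C_f' + \sum_i v_i(f)\, E_i$, where $C_f'$ denotes the strict transform and the sum runs over all prime components where $f$ vanishes (both exceptional and not). The key claim is that $\overline{C_f}$ is an effective integral antinef divisor on $X'$: since $\pi^{\ast}f$ is a principal divisor, $\overline{C_f}\cdot E_j = 0$ for every exceptional $E_j$, so $-\overline{C_f}\cdot E_j = 0\geqslant 0$, which is precisely the antinef condition. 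Moreover $\overline{C_f}\geqslant D$: for each component $E_i\leqslant D$ the assumption $f\in I_D$ gives $v_i(f)\geqslant \lceil d_i\rceil \geqslant d_i$, and on the remaining components the coefficient of $D$ is zero while $\overline{C_f}$ is effective.

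Once antinefness and $\overline{C_f}\geqslant D$ are in place, Lemma \ref{def_antinef} applies and yields $\overline{C_f}\geqslant \widetilde{D}$ by the minimality characterization of the antinef closure. In particular $v_i(f)\geqslant v_i(\widetilde{D})$ for every $E_i \leqslant \widetilde{D}$, which is exactly the defining condition for $f\in I_{\widetilde{D}}$. This gives the missing inclusion and completes the argument.

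The only place that requires genuine geometric input is the antinefness of $\overline{C_f}$, which ultimately relies on the fact that the total transform of a Cartier divisor pulled back from $X$ is numerically trivial on every exceptional curve; this is the foundational observation behind Lipman's correspondence between complete ideals and antinef divisors, so after invoking it the proposition becomes a one-line application of the minimality in Lemma \ref{def_antinef}. I expect no further obstacles, since the rounding up in the definition of $I_D$ is absorbed harmlessly by the fact that $\overline{C_f}$ already has integer coefficients.
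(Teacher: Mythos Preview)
Your argument is correct. The paper does not actually prove Proposition~\ref{equiv_antinef} directly; it is stated there as a result due to Lipman \cite{Lip69}. What the paper does provide, a few paragraphs later, is an alternative constructive proof via the unloading procedure: one shows that a single unloading step $D \mapsto D'$ preserves the ideal ($I_{D'}=I_D$), and that finitely many such steps reach $\widetilde{D}$. Your route is instead the classical direct one: the total transform $\overline{C_f}$ of any nonzero $f\in I_D$ is an integral antinef divisor (being the pullback of a Cartier divisor, it is numerically trivial against every exceptional curve) that dominates $D$, so the minimality in Lemma~\ref{def_antinef} forces $\overline{C_f}\geqslant \widetilde{D}$ and hence $f\in I_{\widetilde{D}}$. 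Both approaches are valid; yours is shorter and uses only the abstract characterization of $\widetilde{D}$, while the paper's unloading argument is constructive and doubles as the algorithm they need for the rest of the work.
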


One of the advantages of working with antinef divisors is that they
provide the following characterization for the inclusion (or strict
inclusion) of two given complete ideals.

\begin{proposition}\label{semicont_antinef}
Let $D_1,D_2$ be two antinef divisors in ${\rm Div}(X')$. Then:

\begin{itemize}

\item[i)] $\pi_* \Oc_{X'}(-D_1)\supseteq \pi_* \Oc_{X'}(-D_2)$ if and only
if ${D_1}\leqslant D_2$.

\item[ii)] $\pi_* \Oc_{X'}(-D_1) \varsupsetneq \pi_* \Oc_{X'}(-D_2)$ if and only if
${D_1} < D_2$.
\end{itemize}

\end{proposition}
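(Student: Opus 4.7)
The plan is to deduce both statements from Lipman's bijection between antinef integral divisors and complete ideals, together with Lemma \ref{def_antinef} and Proposition \ref{equiv_antinef}.

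For the reverse implication in (i), the containment $I_{D_1} \supseteq I_{D_2}$ when $D_1 \leqslant D_2$ is immediate from the definition of $I_D$: if $D_1 \leqslant D_2$ then the support of $D_1$ is contained in that of $D_2$, so any $f$ satisfying $v_i(f) \geqslant v_i(D_2)$ for every $E_i \leqslant D_2$ automatically satisfies $v_i(f) \geqslant v_i(D_1)$ for every $E_i \leqslant D_1$.

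For the forward implication, the key device is the componentwise maximum $D := \max(D_1,D_2)$, which is an effective integral divisor with $v_i(D) = \max(v_i(D_1),v_i(D_2))$. Directly from the definition of $I_D$ one checks that $I_D = I_{D_1} \cap I_{D_2}$, and the hypothesis $\pi_*\Oc_{X'}(-D_1) \supseteq \pi_*\Oc_{X'}(-D_2)$ therefore gives $I_D = I_{D_2}$. Letting $\widetilde{D}$ be the antinef closure of $D$, Proposition \ref{equiv_antinef} yields $I_{\widetilde{D}} = I_D = I_{D_2}$, and since both $\widetilde{D}$ and $D_2$ are antinef, Lipman's bijection forces $\widetilde{D} = D_2$. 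Combining with Lemma \ref{def_antinef} we conclude $D_1 \leqslant D \leqslant \widetilde{D} = D_2$, as desired.

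Part (ii) then follows formally from (i): if $D_1 < D_2$ then (i) gives $I_{D_1} \supseteq I_{D_2}$, and equality of the two ideals would contradict injectivity of the bijection; the converse is identical, applying (i) to get $D_1 \leqslant D_2$ and then again invoking the bijection to rule out $D_1 = D_2$. The only point requiring genuine verification is the identity $I_D = I_{D_1} \cap I_{D_2}$ for the componentwise maximum, which is routine since all divisors in sight are integral and the vanishing conditions are imposed component by component; beyond that, the argument is driven entirely by the antinef closure machinery and the uniqueness part of Lipman's correspondence that have already been established.
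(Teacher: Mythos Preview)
Your proof is correct. Note, however, that the paper does not actually supply its own proof of this proposition: it is stated in Section~\ref{unloading} as a known fact from Lipman's theory of complete ideals and antinef divisors, alongside Lemma~\ref{def_antinef}, Proposition~\ref{equiv_antinef}, and Proposition~\ref{prop_clausura}, all of which are quoted without argument and attributed to \cite{Lip69} (with further references to \cite{Tuc09} and \cite{LW}). So there is no in-paper proof to compare against.

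That said, your argument is sound and self-contained relative to the tools the paper does make available. The componentwise maximum $D=\max(D_1,D_2)$ indeed satisfies $I_D=I_{D_1}\cap I_{D_2}$ (the verification is componentwise, as you note, and works for arbitrary effective integral divisors regardless of whether the support is purely exceptional), and the chain $D_1\leqslant D\leqslant \widetilde{D}=D_2$ via Proposition~\ref{equiv_antinef} and Lipman's bijection is exactly the right mechanism. Part (ii) is then a formal consequence, using that $D_1<D_2$ in this paper means $D_1\leqslant D_2$ and $D_1\neq D_2$ (cf.\ the corollary following Lemma~\ref{Lem1}). Nothing is missing.
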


For non-antinef divisors we can only claim the following
implication:

\begin{proposition}\label{prop_clausura}
Let $D_1,D_2$ be two  divisors in ${\rm Div}_{\bQ}(X')$ such that
$D_1\leqslant D_2$. Then:

\begin{itemize}

\item[i)] $\pi_* \Oc_{X'}(-D_1) \supseteq \pi_* \Oc_{X'}(-D_2)$.

\item[ii)] $\widetilde{D_1}\leqslant \widetilde{D_2}$.
\end{itemize}
The converses to these results are no longer true.

\end{proposition}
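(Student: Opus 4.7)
The plan is to deduce both assertions directly from the pointwise definition of $I_D$ and from the minimality built into Lemma \ref{def_antinef}, and then exhibit simple counterexamples for the failing converses using the identification in Proposition \ref{equiv_antinef}.

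First, I would prove (i) by unwinding the description of the stalk at $O$. Writing $D_j = \sum_i d_{j,i}E_i$ for $j=1,2$, the hypothesis $D_1 \leqslant D_2$ gives $d_{1,i} \leqslant d_{2,i}$ and therefore $\lceil d_{1,i}\rceil \leqslant \lceil d_{2,i}\rceil$ for each $i$. Any germ $f\in\cO_{X,O}$ with $v_i(f)\geqslant \lceil d_{2,i}\rceil$ for every component also satisfies $v_i(f)\geqslant \lceil d_{1,i}\rceil$, so $I_{D_2}\subseteq I_{D_1}$ and hence $\pi_*\cO_{X'}(-D_2)\subseteq \pi_*\cO_{X'}(-D_1)$, as sheaves agreeing outside $O$ via the same construction.

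Second, I would handle (ii) by invoking the minimality clause of Lemma \ref{def_antinef} for $D_1$. The divisor $\widetilde{D_2}$ is integral, antinef and satisfies $\widetilde{D_2}\geqslant D_2\geqslant D_1$; being the \emph{minimal} such divisor dominating $D_1$, the antinef closure $\widetilde{D_1}$ must therefore be bounded above by it, i.e. $\widetilde{D_1}\leqslant \widetilde{D_2}$.

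Finally, for the failure of the converses, I would lean on Proposition \ref{equiv_antinef}: since a non-antinef effective $\bQ$-divisor $D$ and its antinef closure $\widetilde{D}$ produce the same complete ideal, one gets pairs $(D_1,D_2)$ that share an ideal (or even satisfy the inclusion in (i)) without being comparable as divisors. Concretely, I would choose $D_1$ to be a single exceptional prime $E_i$ whose antinef closure is forced by the unloading step to gain multiplicity along some adjacent $E_j$, and take $D_2$ to be a divisor that is larger on $E_i$ but smaller on $E_j$; this simultaneously disproves the converses of both (i) and (ii). The only delicate point in the plan is calibrating this counterexample so that the unloading procedure genuinely breaks the inequality, but any rupture or end configuration in the dual graph suffices, and the rest of the argument is formal.
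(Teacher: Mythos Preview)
Your argument is correct. The paper states this proposition without proof, treating it as a routine consequence of the definitions; what you have written is precisely the natural justification: part (i) from the explicit description of the stalk $I_D$, part (ii) from the minimality clause in Lemma \ref{def_antinef}, and the failure of the converses from Proposition \ref{equiv_antinef} (any non-antinef $D$ and its antinef closure $\widetilde{D}$ already furnish a pair with equal ideals and equal antinef closures but $D<\widetilde{D}$, so $\widetilde{D}\not\leqslant D$). Your more elaborate counterexample with incomparable divisors is not strictly necessary, though it does the job as well.
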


\vskip 2mm

In general, the divisors that will be considered in this work are
not antinef. In order to compute their antinef closure we will use
an inductive procedure called {\it unloading} that was already
described in the work of Enriques \cite[IV.II.17]{EC15} (for the
case of smooth varieties) and Laufer's procedure to compute the
fundamental cycle \cite{Lau72} (for varieties with rational
singularities). Here we will present a new version that is a
generalization of both the unloading procedures described by
Casas-Alvero \cite[\S 4.6]{Cas00} (for smooth varieties) and Reguera
\cite{Reg96} (for varieties with rational singularities).

\vskip 2mm

{\bf Unloading procedure:}   Let $D\in {\rm Div}_{\bQ}(X')$ be any
$\bQ$-divisor. Its {\it excess} at the exceptional prime divisor
$E_i$ is the integer $\rho_i= -\lceil D \rceil \cdot E_i$. Denote by
$\Theta$ the set of exceptional components $E_i \leqslant D$ with
negative excesses, i.e. $$\Theta:= \{E_i \leqslant D_{\rm exc}
\hskip 2mm | \hskip 2mm  \rho_i= -\lceil D \rceil \cdot E_i <0 \}.$$
To {\it unload values} on this set is to consider the new divisor
$$D'= \lceil D \rceil + \sum_{E_i \in \Theta} n_i E_i,$$ where $n_i=
\left \lceil \frac {\rho_i}{E_i^2} \right \rceil$. Notice that $n_i$
is the least integer number such that
$$(\lceil D\rceil+n_i E_i)\cdot E_i= -\rho_i + n_i E_i^2 \leqslant 0.$$

\begin{remark}
Casas-Alvero considered at each step just one component with
negative excess. Reguera also considered one component with negative
excess but in her case she also imposed $n_i=1$ at each step. In
this sense, our approach is more economic from a computational point
of view. Furthermore, our procedure allows unloading on divisors
with affine part\footnote{Our method also differs from the one
considered by Lipman-Watanabe in \cite{LW}.}, which will enable us
to treat in a unified way multiplier ideals of both curves and not
necessarily  $\fM$-primary complete ideals.
\end{remark}

The correctness of the unloading procedure is a consequence of the
following results.

\begin{proposition}
Let $D'$ be the divisor obtained from a divisor $D\in {\rm
Div}_{\bQ}(X')$ after one single unloading step. Then $I_{D'}=I_D$.
\end{proposition}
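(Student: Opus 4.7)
The plan is to prove the two inclusions $I_{D'} \subseteq I_D$ and $I_D \subseteq I_{D'}$ separately. For the easy direction, observe that by construction $D' = \lceil D \rceil + \sum_{E_i \in \Theta} n_i E_i \geq \lceil D \rceil \geq D$, and recall that $I_D = I_{\lceil D \rceil}$ by the very definition of $I_D$; hence Proposition \ref{prop_clausura}(i), or simply the definition $I_D = \{f \in \cO_{X,O} : v_i(f) \geq \lceil d_i\rceil \text{ for all } E_i \leq D\}$, immediately yields $I_{D'} \subseteq I_{\lceil D \rceil} = I_D$.

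For the non-trivial inclusion $I_D \subseteq I_{D'}$, I take $f \in I_D$ and look at the associated total transform $\overline{C} = \sum_i v_i(f) E_i$ of the curve $C = V(f) \subseteq X$. It suffices to verify that $v_j(f) \geq \lceil d_j \rceil + n_j$ for each $E_j \in \Theta$, because on components outside $\Theta$ the divisors $D'$ and $\lceil D \rceil$ agree. The key input is that the total transform of a function satisfies $\overline{C} \cdot E_j = 0$ for every exceptional $E_j$, which rearranges to
$$v_j(f)\, E_j^2 \;=\; -\sum_{i\neq j} v_i(f)\, E_i\cdot E_j.$$
Using $v_i(f) \geq \lceil d_i \rceil$ for $E_i \leq D$ (extending by $\lceil d_i\rceil = 0$ on components outside $D$) together with $E_i \cdot E_j \geq 0$ for $i \neq j$ (which follows from the SNC hypothesis on the log-resolution), I bound
$$\sum_{i\neq j} v_i(f)\, E_i\cdot E_j \;\geq\; \sum_{i\neq j}\lceil d_i\rceil\, E_i\cdot E_j \;=\; \lceil D\rceil \cdot E_j - \lceil d_j\rceil\, E_j^2 \;=\; -\rho_j - \lceil d_j\rceil\, E_j^2.$$
Substituting back and dividing by $E_j^2 < 0$ (which reverses the inequality) gives $v_j(f) \geq \rho_j/E_j^2 + \lceil d_j \rceil$. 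Since $v_j(f)$ is an integer and $\lceil d_j \rceil$ is already integral, this forces $v_j(f) \geq \lceil d_j \rceil + \lceil \rho_j/E_j^2\rceil = \lceil d_j \rceil + n_j$, as required.

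The only delicate point in the argument is the reversal of the inequality upon dividing by $E_j^2$, which is precisely where the negative-definiteness of the exceptional intersection form enters; the rest is a weighted intersection identity combined with the simple observation that $\lceil a + m\rceil = m + \lceil a\rceil$ for an integer $m$. I do not anticipate any genuine obstacle: this is essentially the classical unloading argument of Enriques, cleaned up so that it applies simultaneously to the exceptional and affine parts of $D$ by using only the non-negativity of $v_i(f)$ on components outside $D$.
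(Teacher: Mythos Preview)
Your proof is correct and follows essentially the same route as the paper's. The paper phrases the key step using $\overline{C}_{\rm exc}\cdot E_j\leqslant 0$ (together with $(\overline{C}_{\rm exc}-v_j(f)E_j)\cdot E_j\geqslant (\lceil D\rceil-\lceil d_j\rceil E_j)\cdot E_j$), whereas you use the equivalent identity $\overline{C}\cdot E_j=0$ for the full total transform; both unwind to the same inequality $v_j(f)-\lceil d_j\rceil\geqslant \rho_j/E_j^2$ and hence $v_j(f)\geqslant \lceil d_j\rceil+n_j$.
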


\begin{proof}
It is clear from its construction that  $I_{D'}\subseteq I_D$. Pick
$f\in I_D$ and let $\overline{C}=C'+\overline{C}_{\rm exc}$ be the
total transform of the curve $C$ defined by $f=0$. We have $v_i(f)
\geqslant v_i(\lceil D \rceil) \geqslant v_i(D)$ for all $E_i$.
Consider any exceptional divisor $E_i$ where $D$ has negative
excess, from the inequality $(\overline{C}_{\rm exc}-v_i(f)E_i)\cdot
E_i \geqslant (\lceil D \rceil-v_i(\lceil D \rceil) E_i)\cdot E_i$
we deduce
$$-v_i(f)E_i \cdot E_i \geqslant (\lceil D \rceil-v_i(\lceil D \rceil) E_i)\cdot E_i $$
just because $\overline{C}_{\rm exc}\cdot E_i \leqslant 0$.
Equivalently $(\lceil D \rceil+(v_i(f)-v_i(\lceil D \rceil))E_i)
\cdot E_i \leqslant 0 $ so it follows that $n_i \leqslant
v_i(f)-v_i(\lceil D \rceil)$. In particular $n_i +v_i(\lceil D
\rceil) \leqslant v_i(f)$ and $f\in I_{D'}$.

\end{proof}

\begin{proposition}
The antinef closure $\widetilde{D}$ of a divisor $D\in {\rm
Div}_{\bQ}(X')$ is achieved after finitely many unloading steps.
\end{proposition}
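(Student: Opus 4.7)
The plan is to show that the iterated unloading sequence $D = D^{(0)} \leqslant D^{(1)} \leqslant D^{(2)} \leqslant \cdots$ is integer-valued from step one on, weakly increasing by construction, and bounded above by $\widetilde{D}$. Since a componentwise weakly increasing bounded sequence in the free abelian group ${\rm Div}(X')$ must stabilize, this forces termination at some $D^{(N)}$; the definition of an unloading step then forces $D^{(N)}$ to be antinef, and the minimality of $\widetilde{D}$ from Lemma~\ref{def_antinef} closes the argument.

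\textbf{Setup.} After the first step every $D^{(k)}$ is integral, being of the form $\lceil D^{(k-1)}\rceil + \sum n_i E_i$ with $n_i\in\bZ_{\geqslant 0}$, and only the exceptional part is modified along the way.

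\textbf{Key inductive bound.} I would prove by induction on $k$ that $D^{(k)}\leqslant\widetilde{D}$. The base case $D\leqslant\widetilde{D}$ is the defining property of $\widetilde{D}$. For the inductive step, let $R := \widetilde{D} - \lceil D^{(k)}\rceil$, which is effective because $\widetilde{D}$ is integral and dominates $D^{(k)}$. For each $E_i\in\Theta_k$ the excess $\rho_i = -\lceil D^{(k)}\rceil\cdot E_i$ is negative, and antinefness of $\widetilde{D}$ gives
\[
0 \;\geqslant\; \widetilde{D}\cdot E_i \;=\; \lceil D^{(k)}\rceil\cdot E_i + R\cdot E_i \;=\; -\rho_i + R\cdot E_i,
\]
so $R\cdot E_i \leqslant \rho_i$. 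Writing $R=\sum_j m_j E_j$ with $m_j\in\bZ_{\geqslant 0}$ and using that $E_j\cdot E_i\geqslant 0$ for $j\neq i$, we obtain $R\cdot E_i\geqslant m_i E_i^2$, hence $m_i E_i^2\leqslant \rho_i$. Dividing by the negative number $E_i^2$ reverses the inequality to $m_i\geqslant \rho_i/E_i^2$, and integrality of $m_i$ forces $m_i\geqslant \lceil \rho_i/E_i^2\rceil = n_i$. Summing over $\Theta_k$ yields $D^{(k+1)} = \lceil D^{(k)}\rceil + \sum n_i E_i \leqslant \lceil D^{(k)}\rceil + R = \widetilde{D}$.

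\textbf{Termination and identification.} The sequence $\{D^{(k)}\}_{k\geqslant 1}$ is weakly increasing, integral, and bounded by $\widetilde{D}$, hence stabilizes at some $D^{(N)}$. If $\Theta_N$ were non-empty some $n_i\geqslant 1$ would still be added, contradicting stabilization, so every exceptional excess of $D^{(N)}$ is non-positive and $D^{(N)}$ is antinef. Since $D^{(N)}\geqslant D$, the minimality clause in Lemma~\ref{def_antinef} gives $\widetilde{D}\leqslant D^{(N)}$; combined with $D^{(N)}\leqslant\widetilde{D}$ this yields $D^{(N)}=\widetilde{D}$. The main obstacle is the inductive bound $m_i\geqslant n_i$: it is exactly the point where antinefness of $\widetilde{D}$, negative-definiteness ($E_i^2<0$), and non-negative off-diagonal intersections of distinct exceptional components combine. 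Everything else is bookkeeping for a monotone bounded sequence.
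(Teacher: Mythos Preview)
Your proof is correct and follows essentially the same approach as the paper: you bound the increasing integral sequence above by $\widetilde{D}$ via the same inductive computation (using antinefness of $\widetilde{D}$, the sign of $E_i^2$, and nonnegativity of the off-diagonal intersections to get $m_i\geqslant n_i$), and then conclude by finiteness of a bounded increasing integral sequence. Your wrap-up is slightly more explicit than the paper's in verifying that the stable term is antinef and equals $\widetilde{D}$, but the substance is identical.
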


\begin{proof}
We want to show that the divisors in the sequence
$$D\leqslant D_1=\lceil D \rceil <\cdots < D_t < D_{t+1} < \cdots$$
obtained during the unloading procedure are all contained in the
antinef closure $\widetilde{D}$, then the result will follow since
both $D_1$ and $\widetilde{D}$ have integral coefficients and the
inequalities in the unloading sequence are strict. Clearly
$D_1\leqslant \widetilde{D}$ and suppose that $D_t\leqslant
\widetilde{D}$. Notice that for any component $E_i \leqslant D_t$
with negative excess we have
 $(\widetilde{D} - D_t) \cdot E_i \leqslant - D_t \cdot E_i $.
 Then, if we denote $\widetilde{D} - D_t= \sum_{i} m_iE_i$,
the previous inequality becomes $$\begin{array}{rclcl}
(\widetilde{D}-D_t)\cdot E_i&=& (m_iE_i+\sum_{j\neq i} m_jE_j)\cdot
E_i\\&=&m_iE_i^2+\sum_{j\neq i} m_jE_j\cdot E_i&\leqslant&- D_t
\cdot E_i.
\end{array}\,$$
Then, using that $\sum_{j\neq i} m_jE_j\cdot E_i\geqslant 0$, we get
$$m_i\geqslant \left\lceil\frac{- D_t \cdot E_i}{E_i^2}\right\rceil\,,$$
where we used the fact that $D_t$ and $\widetilde{D}$ have integer
coefficients. It follows that $D_{t+1}$ is also contained in
$\widetilde{D}$.
\end{proof}

\subsection{Multiplier ideals}\label{mult_ideals}

Let $\pi: X' \rightarrow X$ be a log-resolution of an ideal $\fa
\subseteq \cO_{X}$ and let  $F$ be the divisor such that
$\fa\cdot\cO_{X'} = \cO_{X'}\left(-F\right)$. The {\em multiplier
ideal (sheaf)} associated to $\fa $ and some rational number
$\lambda \in \bQ_{>0}$ is defined as\footnote{By an abuse of
notation, we will also denote $\J\left(\fa^\lambda\right)$ its stalk
at $O$ so we will omit the word ''sheaf'' if no confusion arises.}
$$\J\left(\fa^\lambda\right) = \pi_*\cO_{X'}\left(\left\lceil K_{\pi} - \lambda F \right\rceil\right).$$
For a detailed overview of the theory of multiplier ideals and the
properties they satisfy, we must refer to the book of Lazarsfeld
\cite{Laz04}. For more details in the case that $X$ has rational
singularities we also recommend to take a look at \cite{Tuc09} and
\cite{Tuc10}.

\vskip 2mm

The definition of multiplier ideals is independent of the choice of
log resolution. For simplicity, we will always fix a given
resolution. Multiplier ideals are complete and they are invariants
up to integral closure, i.e.
$\J(\fa^\lambda)=\J(\overline{\fa}^\lambda)$, therefore, without
loss of generality, we may always assume that the ideal $\A$ is
complete.  Moreover,  if $\fa $ is $\fM$-primary it follows that its
associated multiplier ideals $\J\left(\fa^{\lambda}\right)$ are
$\fM$-primary as well.

\vskip 2mm

Some other important properties of multiplier ideals that we will
use in this work are:

\begin{itemize}
 \item {\it Local vanishing theorem:}  $R^i \pi_*\cO_{X'}\left(\left\lceil K_{\pi} - \lambda F \right\rceil\right)= 0$ for all $i>0$ and all $\lambda\in \bQ_{>0}$.

 \item {\it Skoda's theorem:} $\J\left(\fa^\lambda\right) = \fa \cdot \J\left(\fa^{\lambda-1}\right) $ for all $\lambda > \dim \cO_{X,O}=2$.

\end{itemize}

For the case of principal ideals  there is another version of
Skoda's theorem that states that $\J\left(\fa^\lambda\right) = \fa
\cdot \J\left(\fa^{\lambda-1}\right) $ for all $\lambda \geqslant
1$. In particular, we have peridiocity of jumping numbers.

\vskip 2mm

Multiplier ideals come with an attached set of invariants that were
studied systematically by Ein-Lazarsfeld-Smith-Varolin in
\cite{ELSV04}. Clearly
$$\left\lceil K_{\pi}- \lambda F \right\rceil \geqslant \left\lceil K_{\pi}-\left(\lambda +\varepsilon\right)F\right\rceil$$
for any $\varepsilon > 0$, with equality if $\varepsilon$ is small
enough. Therefore the multiplier ideals form a discrete nested
sequence of ideals
$$\Oc_{X,O}\supseteq\J(\A^{\lambda_0})\varsupsetneq\J(\A^{\lambda_1})\varsupsetneq \J(\A^{\lambda_2})\varsupsetneq...\varsupsetneq\J(\A^{\lambda_i})\varsupsetneq...$$
indexed by an increasing sequence of rational numbers $0=\lambda_0 <
\lambda_1 < \lambda_2 < \ldots$ such that for any $c \in
[\lambda_i,\lambda_{i+1})$ it holds
$$\J(\A^{\lambda_i})=\J(\A^c)\varsupsetneq\J(\A^{\lambda_{i+1}}).$$
The $\lambda_i$ are the so-called {\em jumping numbers} of the ideal
$\fa$ and the first jumping number $\lambda_1={\rm lct}(\fa)$ is the
\emph{log-canonical threshold} of $\A$.

\subsection{Contributing divisors}\label{Tucker}

The jumps between multiplier ideals necessarily must occur at
rational numbers $\lambda\in \bQ$ which cause the strict inclusion
of divisors $$\left\lceil K_{\pi}- \lambda F \right\rceil <
\left\lceil K_{\pi}-\left(\lambda-\varepsilon\right)F\right\rceil $$
for any $\varepsilon$. If we take a close look at
 $F= F_{\rm exc}+ F_{\rm aff}$  these rational numbers must belong to
the set of {\it candidate jumping numbers}
$$\left \{  \frac{k_i + m}{e_i} \hskip 2mm | \hskip 2mm m\in \bZ_{>0}  \right\}$$
Notice that for non-exceptional components $E_i \leqslant F_{\rm
aff}$ we have $k_i=0$ and their corresponding candidates $\left \{
\frac{m}{e_i} \hskip 2mm | \hskip 2mm m\in \bZ_{>0}  \right\}$ are
indeed jumping numbers.

\vskip 2mm

It is easy to check that not every candidate jumping number (coming
from the exceptional part) is necessarily a jumping number. To
separate the wheat from the chaff, Tucker \cite{Tuc10} developed the
notion of {\it divisor that contributes} to a jumping number,
building upon previous work by Smith-Thompson \cite{ST07}.

\begin{definition}
 A positive rational number $\lambda$ is a \emph{candidate jumping number} for a reduced divisor $G\leqslant F$ if it
satisfies $\lambda e_i - k_i \in \bZ_{>0}$ for any component
$E_i\leqslant G$.

\end{definition}

\begin{definition}\cite[Def. 3.1]{Tuc10}
A reduced divisor $G \leqslant F$ for which $\lambda$ is a candidate
jumping number is said to {\it contribute} to $\lambda$ if
$$ \pi_{*}\Oc_{X'}(\lceil K_{\pi} -\lambda F\rceil+G)\varsupsetneq \J({\A}^{\lambda})$$
Moreover, this contribution is {\it critical} if
 for any divisor $0 \leqslant G'< G  $ we have
$$ \pi_{*}\Oc_{X'}(\lceil K_{\pi} -\lambda F\rceil+G') = \J({\A}^{\lambda}).$$
\end{definition}

Most often we will simply say that $G$ is just a {\it contributing}
or a {\it critical divisor} associated to $\lambda$. Critical
divisors define complete ideals very close to a multiplier ideal in
a precise sense that will be explained in the forthcoming Corollary
\ref{critical_min} in Section \S\ref{mini}. One may identify critical
divisors with exceptional support through the following numerical
characterization.

\begin{proposition} \label{T1}\cite[Thm. 4.3]{Tuc10}
Let $\lambda$ be a candidate jumping number for a reduced divisor
$G\in \Lambda$ with connected support.

\begin{itemize}
 \item[$\cdot$]  If $G=E_i$ is prime, then $E_i$ is a critical divisor for $\lambda$  if and only if
 $$(\lceil K_{\pi} - \lambda F\rceil + E_i)\cdot E_i \geqslant 0.$$

 \item[$\cdot$] If $G$ is reducible, then  $G$ is a critical divisor for $\lambda$ if and only if
 $$(\lceil K_{\pi} - \lambda F\rceil + G )\cdot E_i = 0 $$ for all divisors $E_i$ in the support of $G$.
\end{itemize}
\end{proposition}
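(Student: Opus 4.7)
My approach is cohomological, translating ideal inclusions into (non-)vanishing of global sections on the curve $G$ via the local vanishing theorem. Setting $D := \lceil K_\pi - \lambda F \rceil$, for any reduced effective subdivisor $H \leqslant F$ I would apply $\pi_*$ to the short exact sequence
$$0 \lra \cO_{X'}(D) \lra \cO_{X'}(D+H) \lra \cO_H(D+H) \lra 0$$
and use $R^1\pi_*\cO_{X'}(D) = 0$ to obtain
$$\pi_*\cO_{X'}(D+H)/\J(\A^\lambda) \;\cong\; H^0(H, \cO_H(D+H)).$$
Consequently, $H$ contributes to $\lambda$ precisely when $H^0(H, \cO_H(D+H)) \neq 0$, and $G$ is critical iff this non-vanishing holds for $H = G$ while $H^0(G', \cO_{G'}(D+G')) = 0$ for every proper $0 \leqslant G' < G$.

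In the prime case $G = E_i \cong \Pj$, the question collapses to the degree $(D+E_i) \cdot E_i$ of a line bundle on $\Pj$, which has non-zero sections iff this degree is non-negative; criticality is automatic, the only proper subdivisor being zero.

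For the reducible case with connected support, the geometric key is that $G$ is a tree of smooth rational curves, so $p_a(G) = 0$ and $\chi(\cO_G) = 1$. For the ``if'' direction, assuming $(D+G) \cdot E_i = 0$ on every $E_i \leqslant G$, the line bundle $\cO_G(D+G)$ has degree zero on each component of the tree, hence is isomorphic to $\cO_G$, so $H^0 = \C$ and $G$ contributes. For any proper $G' < G$, I would decompose $G' = \bigsqcup_\ell G'_\ell$ into connected components; since $G$ is connected, each $G'_\ell$ contains a component $E$ adjacent in $G$ to $G - G'$, and there $\cO_{G'}(D+G')$ has degree $-(G-G') \cdot E < 0$. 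An induction on the number of components of $G'_\ell$, peeling off a leaf of negative degree via the short exact sequence
$$0 \lra \cO_{G'_\ell - E_j}(D + G' - E_j) \lra \cO_{G'_\ell}(D+G') \lra \cO_{E_j}(D+G') \lra 0,$$
forces $H^0(G'_\ell, \cO_{G'_\ell}(D+G')) = 0$, so no proper subdivisor contributes.

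The converse is the delicate direction, where I expect the main obstacle. Assuming $G$ critical, I would rule out both $(D+G) \cdot E_j < 0$ and $(D+G) \cdot E_j > 0$. The negative case is quick: $H^0(E_j, \cO_{E_j}(D+G)) = 0$ forces the restriction $H^0(G, \cdot) \to H^0(E_j, \cdot)$ to vanish, so the non-zero section of $\cO_G(D+G)$ lifts from a section of $\cO_{G-E_j}(D+G-E_j)$, meaning $G - E_j$ contributes and contradicting criticality. The harder case is $(D+G) \cdot E_j > 0$ with all other intersections $\geqslant 0$: Riemann--Roch on the tree gives $h^0(G, \cO_G(D+G)) = 1 + \sum_k (D+G) \cdot E_k \geqslant 2$, and the crux is to exhibit some $E_k \neq E_j$ for which $G - E_k$ still has a non-zero section. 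I would attempt this by choosing $E_k$ to be a leaf of $G$ in a subtree that avoids $E_j$ (or a component sufficiently far from $E_j$ in the dual graph), and then tracking how the surplus degree at $E_j$ propagates through the cohomology of the connected pieces of $G - E_k$ despite the $-1$ adjustments at components adjacent to $E_k$. Pinning down this selection and verifying that the resulting subdivisor indeed has a non-trivial section constitutes the delicate core of the proof.
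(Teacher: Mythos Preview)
The paper does not prove this proposition; it is quoted from Tucker \cite[Thm.~4.3]{Tuc10} as background in Section~\ref{Tucker}, so there is no in-paper argument to compare against. Your cohomological approach via local vanishing and the identification $\pi_*\cO_{X'}(D+H)/\J(\fa^\lambda) \cong H^0\big(H,\cO_H(D+H)\big)$ is the standard line of attack and is essentially how Tucker proceeds.

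Your treatment of the prime case and of the ``if'' direction in the reducible case is correct: a line bundle of multidegree zero on a tree of $\bP^1$'s is trivial, and the leaf-peeling induction kills $H^0$ on every proper subdivisor. In the converse, the negative-degree case is also fine (and note that if $G-E_j$ is disconnected, the nonzero section lives on one connected piece, which is itself a proper contributing subdivisor). The only part left imprecise is the one you flag yourself: all $(D+G)\cdot E_i \geqslant 0$ with some strict inequality. Here a cleaner dichotomy than ``choose a distant leaf'' closes the gap immediately. If some \emph{leaf} $E_k$ of $G$ has $(D+G)\cdot E_k>0$, then $(D+E_k)\cdot E_k=(D+G)\cdot E_k-1\geqslant 0$, so the single component $E_k$ already contributes and $G$ is not critical. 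Otherwise every leaf satisfies $(D+G)\cdot E_k=0$; picking any such leaf, your exact sequence gives
\[
h^0\big(G-E_k,\cO_{G-E_k}(D+G-E_k)\big)\;\geqslant\; h^0\big(G,\cO_G(D+G)\big)-1\;\geqslant\;1
\]
by your Riemann--Roch count $h^0=1+\sum_i (D+G)\cdot E_i\geqslant 2$, so $G-E_k$ contributes. Either way $G$ fails to be critical, and the propagation argument you anticipated is unnecessary.
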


Moreover, critical divisors with exceptional support satisfy a nice
geometric property when viewed in the dual graph.

\begin{proposition} \label{T2}\cite[Cor. 4.2 \& Thm 5.1]{Tuc10}
Let $G$ be a critical divisor for a jumping number $\lambda$. Then
$G$ is a connected chain in the dual graph of the log-resolution of
$\A$ whose ends must be either rupture or dicritical divisors.
\end{proposition}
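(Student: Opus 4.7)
My plan is to prove the three conclusions---connectedness, the chain structure, and the classification of the ends---in that order. I will use local vanishing for the first, and the numerical criterion in Proposition~\ref{T1} for the latter two. Throughout, set $D := \lceil K_\pi - \lambda F\rceil$ and write
$$D = K_\pi - \lambda F + \epsilon, \qquad \epsilon = \sum_j \epsilon_j E_j, \qquad \epsilon_j \in [0,1).$$
The fundamental bookkeeping fact is that, because $\lambda$ is a candidate jumping number for $G$, one has $\lambda e_j - k_j \in \bZ_{>0}$ and hence $\epsilon_j = 0$ for every $E_j \leqslant G$.

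For connectedness, suppose that $G = G_1 + G_2$ with $G_1 \cdot G_2 = 0$ in the dual graph. Pushing forward the short exact sequence
\begin{equation*}
0 \longrightarrow \cO_{X'}(D) \longrightarrow \cO_{X'}(D+G) \longrightarrow \cO_G(D+G) \longrightarrow 0
\end{equation*}
and using $\pi_*\cO_{X'}(D) = \J(\A^\lambda)$ together with the local vanishing $R^1\pi_*\cO_{X'}(D) = 0$, we deduce $\pi_*\cO_{X'}(D+G)/\J(\A^\lambda) \cong H^0(G,\cO_G(D+G))$. Since $G_1$ and $G_2$ share no components and no intersection points, $\cO_G = \cO_{G_1}\oplus\cO_{G_2}$ and $(D+G)|_{G_k} = (D+G_k)|_{G_k}$, so running the same argument with $G_k$ in place of $G$ yields the splitting
\begin{equation*}
\pi_*\cO_{X'}(D+G)/\J(\A^\lambda) \;\cong\; \bigoplus_{k=1,2} \pi_*\cO_{X'}(D+G_k)/\J(\A^\lambda).
\end{equation*}
Criticality of $G$ makes the left-hand side non-zero, so at least one subdivisor $G_k < G$ still contributes to $\lambda$, contradicting criticality. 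Hence $G$ is connected.

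With connectedness in hand, Proposition~\ref{T1} yields $(D+G)\cdot E_i = 0$ for every $E_i \leqslant G$ in the reducible case, and $(D+E_i)\cdot E_i \geqslant 0$ in the prime case $G = E_i$. Using $K_\pi\cdot E_i = -E_i^2 - 2$, $F\cdot E_i = -\rho_i$ and $G\cdot E_i = E_i^2 + a_G(E_i)$, a direct computation gives
\begin{equation*}
(D+G)\cdot E_i \;=\; -2 + \lambda\rho_i + \epsilon\cdot E_i + a_G(E_i),
\end{equation*}
where $\epsilon\cdot E_i = \sum_{E_j \in \adj(E_i)\setminus G}\epsilon_j \in [0,\, a(E_i)-a_G(E_i))$, the upper bound being strict whenever the indexing set is non-empty because each $\epsilon_j < 1$. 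In the reducible case the identity $\lambda\rho_i + \epsilon\cdot E_i = 2 - a_G(E_i)$ together with the non-negativity of both left-hand summands immediately forces $a_G(E_i) \leqslant 2$ for every $E_i \leqslant G$; since $G$ is a connected reduced subgraph of a tree with all valences at most $2$, it is a chain. For an end $E_i$ (so $a_G(E_i)=1$), if $E_i$ is not a rupture vertex ($a(E_i)\leqslant 2$) then $\epsilon\cdot E_i < 1$, whence $\lambda\rho_i > 0$, so $\rho_i > 0$ and $E_i$ is dicritical. The prime case $G=E_i$ is analogous: the inequality $\lambda\rho_i + \epsilon\cdot E_i \geqslant 2$ combined with $\epsilon\cdot E_i < a(E_i) \leqslant 2$ when $E_i$ is not a rupture vertex forces $\rho_i > 0$.

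The main obstacle, and the technical heart of the argument, is the delicate bookkeeping of the ceiling operation in $\lceil K_\pi - \lambda F\rceil$: Proposition~\ref{T1} is a soft intersection identity, and extracting from it the sharp combinatorial conclusions $a_G(E_i)\leqslant 2$ and ``rupture or dicritical'' at the ends requires the correction $\epsilon\cdot E_i$ to obey strict upper bounds, which in turn holds only because the candidacy hypothesis on $\lambda$ gives $\epsilon_j = 0$ on $G$ while the ceiling construction forces $\epsilon_j < 1$ off $G$.
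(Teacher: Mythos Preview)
The paper does not prove this proposition; it is quoted from Tucker \cite[Cor.~4.2 \& Thm~5.1]{Tuc10}, so there is no in-paper argument to compare against directly. Your proof is correct, and in fact it parallels very closely the techniques the paper \emph{does} develop later for the analogous statement about minimal jumping divisors (Theorem~\ref{geo_dual_graph}): your intersection identity
\[
(\lceil K_\pi-\lambda F\rceil+G)\cdot E_i \;=\; -2 + \lambda\rho_i + \sum_{E_j\in\adj(E_i)\setminus G}\{\lambda e_j-k_j\} + a_G(E_i)
\]
is exactly Lemma~\ref{num} specialized to $G$, and your short-exact-sequence argument is the same mechanism used in Proposition~\ref{Num_condition}. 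The one genuinely additional ingredient you supply beyond what the paper proves for $G_\lambda$ is connectedness, which you obtain cleanly from local vanishing and the splitting $H^0(G,\cO_G(D+G))\cong\bigoplus_k H^0(G_k,\cO_{G_k}(D+G_k))$; this is the standard route and is essentially Tucker's own argument. The chain conclusion $a_G(E_i)\leqslant 2$ then drops out of the \emph{equality} $(\lceil K_\pi-\lambda F\rceil+G)\cdot E_i=0$ from Proposition~\ref{T1} in the reducible case, which is sharper information than the inequality available for $G_\lambda$; this is why critical divisors are chains while minimal jumping divisors need not be. One small remark: your appeal to Proposition~\ref{T1} presupposes that $G$ has exceptional support (that proposition is stated for $G\in\Lambda$), which is also the implicit hypothesis under which the paper states Proposition~\ref{T2}; it would do no harm to make this explicit.
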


Using all these properties, Tucker provides a simple  algorithm to
compute the set of all jumping numbers (see \cite[\S 6]{Tuc10}). It
boils down to the following steps:

\begin{algorithm} { (Jumping Numbers)} \label{A1}

\vskip 2mm

\noindent {\tt Input:} { A log-resolution of an 
 ideal} $\fa \subseteq \cO_{X,O}$.  

\noindent {\tt Output:} List of Jumping Numbers of $\fa$.

 \vskip 2mm

 \begin{enumerate}
  \item[$\bullet$] {\bf Jumping number}:

  \begin{itemize}
  \item[$\cdot$]  Compute the candidate jumping numbers for $F_{\rm exc}$.
  \item[$\cdot$]  Find all possible critical divisors using Prop. \ref{T2}.
  \item[$\cdot$]  Find which candidate jumping numbers can be realized as jumping number associated to these critical divisors
  using Prop. \ref{T1}.
  \item[$\cdot$] Plug in those jumping numbers coming from $F_{\rm aff}$.

  \end{itemize}

 \end{enumerate}

\end{algorithm}

\section{An algorithm to compute jumping numbers and multiplier ideals} \label{main_algorithm}

The aim of this section is to compute the jumping numbers and their
corresponding multiplier ideals of any given ideal  $\fa \subseteq
\cO_{X,O}$. To such purpose, we fix a log-resolution $\pi: X' \lra
X$ of our ideal $\fa$. The main ingredients we will have to deal
with are the relative canonical divisor $K_\pi = \sum_{i=1}^r k_i
E_i \in \Lambda_{\bQ}$, and the divisor $F \in {\rm Div}(X')$ such
that $\fa \cO_{X'}= \cO_{X'}(-F)$. Recall that we have a
decomposition
$$ F=F_{\rm exc}+F_{\rm aff}= \sum_{i=1}^r e_i E_i + \sum_{i=r+1}^s e_i E_i $$ in terms of its exceptional and affine support.

\vskip 2mm

We will provide a very simple algorithm that allows one to construct
sequentially the chain of multiplier ideals\footnote{In fact, we can
compute the chain inside any desired fixed range $[c, c']\subseteq
\mathbb{R}$:
$$ \J(\A^{c})= \J(\A^{\lambda_0})\varsupsetneq\J(\A^{\lambda_1})\varsupsetneq...\varsupsetneq\J(\A^{\lambda_r})= \J(\A^{c'})\, . $$ }
$$\Oc_{X,O}\supseteq\J(\A^{\lambda_0})\varsupsetneq\J(\A^{\lambda_1})\varsupsetneq \J(\A^{\lambda_2})\varsupsetneq...\varsupsetneq\J(\A^{\lambda_i})\varsupsetneq...$$

When $X$ is a smooth surface, or even when $X$ has a log-terminal
singularity at $O$, the multiplier ideal associated to $\lambda_0=0$
is the whole ring, i.e. $\Oc_{X,O}= \J(\A^{\lambda_0})$. In general,
when $X$ has a rational singularity  we may have an strict inclusion
$\Oc_{X,O}\varsupsetneq\J(\A^{\lambda_0})$. The starting point of
our method will be describing this ideal by means of the antinef
closure $D_{\lambda_0}=\sum e_i^{\lambda_0} E_i$ of $\lfloor  -
K_\pi \rfloor$ that we compute using the unloading procedure
described in Section \S\ref{unloading}.

\vskip 2mm

As a consequence of our main result (see Theorem \ref{lct_ideal}),
the log-canonical threshold satisfies the following
formula\footnote{When $X$ is smooth, or even when it has
log-terminal singularities, we have $D_{\lambda_0}=0$ so one
recovers the well-known formula for the log-canonical threshold.}
\begin{equation} \label{lct-min}
\lambda_1= {\rm lct}({\A})= \min_i\left\{\frac{k_i+1 +
e_i^{\lambda_0}}{e_i}\right\}.
\end{equation}

\vskip 2mm

Then we describe its associated multiplier ideal
$\J(\A^{\lambda_1})$  just computing the antinef closure
$D_{\lambda_1}$ of $\lfloor \lambda_1 F - K_\pi \rfloor$ using the
unloading procedure. Once we have the divisor $D_{\lambda_1}$, we
use an extension of Formula \ref{lct-min}  given by Theorem
\ref{lct_ideal},  that computes the next jumping number $\lambda_2$.
Then we only have to follow the same strategy: the antinef closure
$D_{\lambda_2}$ of $\lfloor \lambda_2 F - K_\pi \rfloor$, i.e. the
multiplier ideal $\J(\A^{\lambda_2})$, will allow us to compute
$\lambda_3$ and so on.

\vskip 2mm

The main idea behind our method is a simple comparison between
complete ideals. Whenever we have two antinef divisors it is easy to
check  whether their corresponding complete  ideals satisfy a strict
inclusion (see Proposition \ref{semicont_antinef}). To compare the
ideals associated to an antinef and a non-antinef divisor is more
subtle and this is the situation that we will have to deal with in
this section.

\vskip 2mm

To address this problem we will need some preliminary technical
results.

\begin{lemma}\label{Lem1}
Let $D_1,D_2$ be two divisors in ${\rm Div}(X')$ such that
$D_1\leqslant D_2$. Then, they have the same antinef closure
$\widetilde{D_1}=\widetilde{D_2}$ if and only if
$\widetilde{D_1}\geqslant D_2$.

\end{lemma}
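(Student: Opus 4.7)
The plan is to treat both directions separately, relying essentially on the minimality characterization of the antinef closure in Lemma \ref{def_antinef} and the monotonicity statement of Proposition \ref{prop_clausura}.

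For the forward implication ($\widetilde{D_1}=\widetilde{D_2}\Rightarrow \widetilde{D_1}\geqslant D_2$), I would simply invoke Lemma \ref{def_antinef}, which guarantees $\widetilde{D_2}\geqslant D_2$; combined with the hypothesis $\widetilde{D_1}=\widetilde{D_2}$ this yields $\widetilde{D_1}\geqslant D_2$ at once, so nothing more is needed.

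For the reverse implication, the strategy is to establish the two inequalities $\widetilde{D_1}\geqslant \widetilde{D_2}$ and $\widetilde{D_1}\leqslant \widetilde{D_2}$ separately. For the first, I would use the hypothesis $\widetilde{D_1}\geqslant D_2$: since $\widetilde{D_1}$ is an integral antinef divisor dominating $D_2$, the minimality clause of Lemma \ref{def_antinef} forces $\widetilde{D_1}\geqslant \widetilde{D_2}$. For the opposite inequality, I would apply Proposition \ref{prop_clausura}(ii) to the hypothesis $D_1\leqslant D_2$, which gives directly $\widetilde{D_1}\leqslant \widetilde{D_2}$. Combining both inequalities yields the equality of antinef closures.

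I do not foresee a genuine obstacle here: the lemma is a short formal consequence of the minimality property of antinef closures plus the monotonicity $D_1\leqslant D_2 \Rightarrow \widetilde{D_1}\leqslant \widetilde{D_2}$, both of which are already available in the preliminaries. The only care needed is to make sure we apply the minimality statement in the correct direction — namely, to conclude that any antinef divisor dominating $D_2$ must also dominate $\widetilde{D_2}$ — and not to confuse this with the converse inequality, which does not hold in general for non-antinef divisors, as warned at the end of Proposition \ref{prop_clausura}.
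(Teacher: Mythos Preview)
Your proposal is correct and follows essentially the same argument as the paper: both directions rest on the minimality clause of Lemma~\ref{def_antinef} together with the monotonicity $\widetilde{D_1}\leqslant\widetilde{D_2}$ from Proposition~\ref{prop_clausura}(ii). The only cosmetic difference is that the paper records the monotonicity inequality once at the outset and treats the two implications in the opposite order.
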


\begin{proof}
Recall that, by Proposition \ref{prop_clausura}, we already have
$\widetilde{D_1}\leqslant\widetilde{D_2}$ just because $D_1\leqslant
D_2$.

\vskip 2mm

Assume $\widetilde{D_1}\geqslant D_2$ then, by the definition of
antinef closure (see Lemma \ref{def_antinef}), we also have
$\widetilde{D_1}\geqslant \widetilde{D_2}\geqslant  D_2$ and thus
$\widetilde{D_1}=\widetilde{D_2}$. On the other hand, assume that
$\widetilde{D_1}=\widetilde{D_2}$. Then, since the antinef closure
of a divisor always contains it, we have
$\widetilde{D_1}=\widetilde{D_2} \geqslant D_2$ as desired.
\end{proof}

\begin{corollary}
Let $D_1,D_2$ be two divisors in ${\rm Div}(X')$ such that
$D_1\leqslant D_2$. Then, $\widetilde{D_1}<\widetilde{D_2}$ if and
only if $v_i(\widetilde{D_1}) < v_i(D_2)$ for some $E_i$.
\end{corollary}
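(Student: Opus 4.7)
The plan is to read this corollary as the contrapositive of Lemma \ref{Lem1} combined with the monotonicity statement from Proposition \ref{prop_clausura}. Since the lemma just above gives a clean characterization of when two antinef closures coincide, and since the hypothesis $D_1 \leqslant D_2$ already forces $\widetilde{D_1} \leqslant \widetilde{D_2}$, the only room for failure of equality is the strict inequality $\widetilde{D_1} < \widetilde{D_2}$. So the whole argument reduces to negating the condition $\widetilde{D_1} \geqslant D_2$ componentwise.

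More precisely, first I would invoke Proposition \ref{prop_clausura}(ii) to get $\widetilde{D_1} \leqslant \widetilde{D_2}$ from $D_1 \leqslant D_2$. This turns the dichotomy into: either $\widetilde{D_1} = \widetilde{D_2}$ or $\widetilde{D_1} < \widetilde{D_2}$. Then Lemma \ref{Lem1} characterizes the equality case as $\widetilde{D_1} \geqslant D_2$. Taking contrapositives, $\widetilde{D_1} < \widetilde{D_2}$ is equivalent to $\widetilde{D_1} \not\geqslant D_2$, which by definition of the partial order on ${\rm Div}(X')$ means precisely that there is some exceptional or affine prime $E_i$ with $v_i(\widetilde{D_1}) < v_i(D_2)$.

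For the forward direction one could also argue directly: if $v_i(\widetilde{D_1}) \geqslant v_i(D_2)$ for every $E_i$, then $\widetilde{D_1} \geqslant D_2$ is an antinef divisor bounding $D_2$ from above, so by the minimality clause in Lemma \ref{def_antinef} it dominates $\widetilde{D_2}$, forcing $\widetilde{D_1} = \widetilde{D_2}$ and contradicting strictness. The reverse direction is immediate since $\widetilde{D_2} \geqslant D_2$ always, so the existence of an index with $v_i(\widetilde{D_1}) < v_i(D_2) \leqslant v_i(\widetilde{D_2})$ rules out the equality $\widetilde{D_1} = \widetilde{D_2}$.

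There is really no obstacle here; the only point to be careful about is recording that $\widetilde{D_1} \leqslant \widetilde{D_2}$ holds a priori so that ``not equal'' and ``strictly less'' coincide. The proof is essentially a two-line corollary of Lemma \ref{Lem1}, and writing it out amounts to chasing the definitions.
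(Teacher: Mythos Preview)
Your proposal is correct and follows essentially the same approach as the paper: first use Proposition~\ref{prop_clausura}(ii) to obtain $\widetilde{D_1}\leqslant\widetilde{D_2}$, then apply Lemma~\ref{Lem1} and take the contrapositive. The paper's proof is just a two-line version of exactly this argument.
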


\begin{proof}
As  $D_1\leqslant D_2$, the inclusion
$\widetilde{D_1}\leqslant\widetilde{D_2}$ also holds. The result
then follows from Lemma \ref{Lem1}.
\end{proof}

Translated into the language of complete ideals, these results give
a characterization of the jump between two nested ideals, which will
be a key ingredient in the proof of our results.

\begin{proposition}\label{semicont}
Let $D_1,D_2$ be two divisors in ${\rm Div}(X')$ such that
$D_1\leqslant D_2$. Then:

\begin{itemize}

\item[i)] $\pi_* \Oc_{X'}(-D_1)= \pi_* \Oc_{X'}(-D_2)$ if and only
if $\widetilde{D_1}\geqslant D_2$.

\item[ii)] $\pi_* \Oc_{X'}(-D_1) \varsupsetneq \pi_* \Oc_{X'}(-D_2)$ if and only if
$v_i(\widetilde{D_1}) < v_i(D_2)$ for some $E_i$.
\end{itemize}

\end{proposition}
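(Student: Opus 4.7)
My plan is to reduce Proposition \ref{semicont} to the corresponding statement for antinef divisors (Proposition \ref{semicont_antinef}), using the equivalence of a divisor with its antinef closure (Proposition \ref{equiv_antinef}) together with Lemma \ref{Lem1} and its Corollary. The key observation is that the ideals $\pi_*\Oc_{X'}(-D_j)$ depend on $D_j$ only through $\widetilde{D_j}$, so passing to antinef closures converts the problem into one about antinef divisors, where inclusions of complete ideals are controlled pointwise by coefficients.

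For part (i), I would argue as follows. Since $\pi_*\Oc_{X'}(-D_j)=\pi_*\Oc_{X'}(-\widetilde{D_j})$ by Proposition \ref{equiv_antinef}, and both $\widetilde{D_1},\widetilde{D_2}$ are antinef, Proposition \ref{semicont_antinef} says the equality of ideals is equivalent to $\widetilde{D_1}=\widetilde{D_2}$. Because $D_1\leqslant D_2$ we may apply Lemma \ref{Lem1}, which translates that equality into the condition $\widetilde{D_1}\geqslant D_2$. Chaining these three equivalences yields (i).

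For part (ii), the argument runs in parallel: the strict inclusion $\pi_*\Oc_{X'}(-D_1)\varsupsetneq\pi_*\Oc_{X'}(-D_2)$ is equivalent to $\pi_*\Oc_{X'}(-\widetilde{D_1})\varsupsetneq\pi_*\Oc_{X'}(-\widetilde{D_2})$ by Proposition \ref{equiv_antinef}, which by Proposition \ref{semicont_antinef}(ii) is equivalent to $\widetilde{D_1}<\widetilde{D_2}$. Then the Corollary stated just above the proposition (which itself is a direct consequence of Lemma \ref{Lem1} combined with Proposition \ref{prop_clausura}) characterizes this strict inequality as the existence of some $E_i$ with $v_i(\widetilde{D_1})<v_i(D_2)$.

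The potential obstacle is merely bookkeeping: ensuring that all the preparatory results used (Proposition \ref{equiv_antinef}, Proposition \ref{semicont_antinef}, Lemma \ref{Lem1}, and its Corollary) genuinely apply without extra hypotheses. In particular, the hypothesis $D_1\leqslant D_2$ is used essentially in Lemma \ref{Lem1}; without it the equivalence in (i) would fail because one could have $\widetilde{D_1}=\widetilde{D_2}$ while $D_2\not\leqslant\widetilde{D_1}$ is not even well-posed as a useful criterion. Once this is explicitly flagged, the proof reduces to chaining the three equivalences for each part, with essentially no further calculation required.
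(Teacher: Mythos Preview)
Your proposal is correct and follows essentially the same route as the paper. The paper does not spell out a proof but presents Proposition~\ref{semicont} as the translation of Lemma~\ref{Lem1} and its Corollary into the language of complete ideals via Proposition~\ref{equiv_antinef} and Proposition~\ref{semicont_antinef}; your chain of equivalences is precisely that translation made explicit.
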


For convenience we also present this result in the form we will most
commonly use it.

\begin{corollary} \label{semicontMI}
 Let $\lambda' <\lambda$ be rational numbers.
Let $D_{\lambda'}=\sum e^{\lambda'}_i E_i$ be the antinef closure of
$\left\lfloor \lambda' F  - K_{\pi} \right\rfloor$. Then:

\begin{itemize}

\item[i)] $\J(\A^{\lambda'}) = \J(\A^{\lambda}) $
if and only if $\lfloor \lambda e_i - k_i \rfloor \leqslant
e^{\lambda'}_i$ for all $E_i$.

\item[ii)] $\J(\A^{\lambda'}) \varsupsetneq
\J(\A^{\lambda})$ if and only if $\lfloor \lambda e_i - k_i \rfloor
> e^{\lambda'}_i$ for some $E_i$.
\end{itemize}

\end{corollary}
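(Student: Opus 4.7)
The plan is to deduce this corollary as a direct specialization of Proposition \ref{semicont}, applied to the two integral divisors that compute the multiplier ideals at $\lambda'$ and $\lambda$. The only real work is bookkeeping between the ceiling form $\lceil K_\pi - \lambda F\rceil$ used in the definition of $\J(\fa^\lambda)$ and the floor form $\lfloor \lambda F - K_\pi\rfloor$ appearing in the statement, together with identifying the antinef closure of the latter with $D_{\lambda'}$.

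First I would set $D_1 := \lfloor \lambda' F - K_\pi \rfloor$ and $D_2 := \lfloor \lambda F - K_\pi \rfloor$, both viewed as elements of ${\rm Div}(X')$. Using the identity $\lceil K_\pi - c F\rceil = -\lfloor c F - K_\pi\rfloor$ for any $c\in \bQ$, the definition of the multiplier ideal in Section \ref{mult_ideals} gives
\[
\J(\fa^{\lambda'}) = \pi_*\cO_{X'}(-D_1), \qquad \J(\fa^{\lambda}) = \pi_*\cO_{X'}(-D_2).
\]
Since $\lambda' < \lambda$ and $F$ is effective, one has $\lambda'F - K_\pi \leqslant \lambda F - K_\pi$ as $\bQ$-divisors, and the round-down preserves this inequality, so $D_1 \leqslant D_2$. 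By hypothesis, the antinef closure $\widetilde{D_1}$ is exactly $D_{\lambda'} = \sum e_i^{\lambda'} E_i$, and writing $D_2$ componentwise gives $v_i(D_2) = \lfloor \lambda e_i - k_i\rfloor$ for every prime divisor $E_i$ in the support (with the convention $k_i = 0$ for non-exceptional $E_i$).

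With this dictionary in place, Proposition \ref{semicont}(i) says $\pi_*\cO_{X'}(-D_1) = \pi_*\cO_{X'}(-D_2)$ if and only if $\widetilde{D_1} \geqslant D_2$, which translates exactly into $e_i^{\lambda'} \geqslant \lfloor \lambda e_i - k_i\rfloor$ for every $E_i$, proving (i). Part (ii) is the logical negation: Proposition \ref{semicont}(ii) gives the strict inclusion $\J(\fa^{\lambda'}) \varsupsetneq \J(\fa^{\lambda})$ precisely when $v_i(\widetilde{D_1}) < v_i(D_2)$ for some $E_i$, i.e.\ $e_i^{\lambda'} < \lfloor \lambda e_i - k_i\rfloor$ for some $E_i$, which (as both sides are integers) is equivalent to $\lfloor \lambda e_i - k_i\rfloor > e_i^{\lambda'}$.

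There is no real obstacle here beyond being careful that the hypotheses of Proposition \ref{semicont} apply: namely, that $D_1, D_2$ are integral (true because of the floors) and that $D_1 \leqslant D_2$ (true by monotonicity of $\lfloor \cdot \rfloor$ applied to the inequality of $\bQ$-divisors). The only subtlety to mention is the affine part of $F$: for non-exceptional components the convention $k_i = 0$ is used, and the argument is uniform across exceptional and affine components, since Proposition \ref{semicont} is stated for arbitrary divisors in ${\rm Div}(X')$ and not only for those with exceptional support.
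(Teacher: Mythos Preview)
Your proof is correct and is exactly the approach intended by the paper: the corollary is simply Proposition~\ref{semicont} applied with $D_1=\lfloor \lambda' F-K_\pi\rfloor$ and $D_2=\lfloor \lambda F-K_\pi\rfloor$, after the identification $\J(\fa^c)=\pi_*\cO_{X'}(-\lfloor cF-K_\pi\rfloor)$ and the observation that $D_1\leqslant D_2$ since $\lambda'<\lambda$ and $F$ is effective. The paper does not spell out a separate proof, presenting the corollary merely as a convenient reformulation of Proposition~\ref{semicont}.
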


\vskip 2mm

With the technical tools stated above we are ready for the main
result of this section.

\begin{theorem} \label{lct_ideal}
Let $\fa \subseteq \cO_{X,O}$ be an ideal and let
$D_{{\lambda'}}=\sum e_i^{{\lambda'}} E_i$ be the antinef closure of
$\left\lfloor \lambda' F  - K_{\pi} \right\rfloor$ for a given $
{\lambda}'\in \bQ_{>0}$. Then, $$\lambda= \min_i\left\{\frac{k_i+1+
e_i^{\lambda'}}{e_i}\right\}$$ is the jumping number consecutive to
$\lambda'$.

\end{theorem}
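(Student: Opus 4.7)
The plan is to reduce the statement to a direct application of the comparison result in Corollary~\ref{semicontMI}, and then solve a system of integer inequalities.

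First, I would set the stage by invoking Proposition~\ref{equiv_antinef} to identify $\J(\A^{\lambda'})=\pi_*\Oc_{X'}(-\lfloor\lambda' F-K_\pi\rfloor)=\pi_*\Oc_{X'}(-D_{\lambda'})$, so that the candidate for the next jumping number is, by definition, the smallest rational $\lambda>\lambda'$ for which $\J(\A^{\lambda'})\varsupsetneq\J(\A^\lambda)$. Denote $\lambda_{\min}:=\min_i\{(k_i+1+e_i^{\lambda'})/e_i\}$, where the minimum is taken over components $E_i\leqslant F$ (so that $e_i>0$).

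The heart of the argument is part (ii) of Corollary~\ref{semicontMI}: the strict inclusion $\J(\A^{\lambda'})\varsupsetneq\J(\A^\lambda)$ occurs precisely when there exists some $E_i$ with $\lfloor\lambda e_i-k_i\rfloor>e_i^{\lambda'}$. Since both sides of this inequality are integers, it is equivalent to $\lfloor\lambda e_i-k_i\rfloor\geqslant e_i^{\lambda'}+1$, i.e. $\lambda e_i-k_i\geqslant e_i^{\lambda'}+1$, i.e.
\[
\lambda\geqslant\frac{k_i+1+e_i^{\lambda'}}{e_i}.
\]
Thus, among all rationals $\lambda$ that produce a strict drop of the multiplier ideal, the smallest is exactly $\lambda_{\min}$: for $\lambda=\lambda_{\min}$ the inequality is met (with equality) at any index realizing the minimum, and for $\lambda<\lambda_{\min}$ every index $i$ satisfies $\lfloor\lambda e_i-k_i\rfloor\leqslant e_i^{\lambda'}$, so part (i) of Corollary~\ref{semicontMI} gives $\J(\A^{\lambda'})=\J(\A^\lambda)$.

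The last thing to verify, to conclude that $\lambda_{\min}$ is genuinely the \emph{consecutive} jumping number and not $\lambda'$ itself, is the strict inequality $\lambda_{\min}>\lambda'$. Here I would use that $D_{\lambda'}\geqslant\lfloor\lambda'F-K_\pi\rfloor$, so $e_i^{\lambda'}\geqslant\lfloor\lambda'e_i-k_i\rfloor>\lambda'e_i-k_i-1$ for every $i$; rearranging yields $(k_i+1+e_i^{\lambda'})/e_i>\lambda'$ for every $i$, hence $\lambda_{\min}>\lambda'$. The step most likely to require care is this last one: one must confirm that the antinef-closure process cannot produce exponents $e_i^{\lambda'}$ so small that the minimum collapses to $\lambda'$; this is exactly where the strict inequality $\lfloor x\rfloor>x-1$ saves the day.
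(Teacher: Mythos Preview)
Your proposal is correct and follows essentially the same route as the paper: both proofs hinge on Corollary~\ref{semicontMI} together with the elementary equivalence $\lfloor\lambda e_i-k_i\rfloor>e_i^{\lambda'}\Longleftrightarrow\lambda\geqslant(k_i+1+e_i^{\lambda'})/e_i$, and both derive $\lambda_{\min}>\lambda'$ from $e_i^{\lambda'}\geqslant\lfloor\lambda'e_i-k_i\rfloor$. The only difference is cosmetic ordering---the paper establishes $\lambda'<\lambda$ first, then the strict jump at $\lambda$, then equality on $[\lambda',\lambda)$, whereas you characterize the jump condition first and verify $\lambda_{\min}>\lambda'$ last---but the content is the same.
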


\begin{proof}
Let us check first that $\lambda'<\lambda$. Indeed, by the
definition of antinef closure, the integers $e_i^{\lambda'}$ satisfy
$\lfloor \lambda' e_i - k_i \rfloor \leqslant e^{\lambda'}_i$ for
any $E_i$, and hence:
$$\lambda' <\frac{k_i+1+e_i^{\lambda'}}{e_i}\,.$$
Thus, we have an inclusion of ideals $\J(\A^{\lambda'})\supseteq
\J(\A^{\lambda}).$ Notice that for those divisors $E_i$ where the
minimum is achieved we have
$$\lfloor \lambda e_i - k_i \rfloor = 1 + e^{\lambda'}_i > e^{\lambda'}_i$$ so the above inclusion of ideals is strict by Corollary
 \ref{semicontMI}. To conclude that $\lambda$ is the jumping number immediately after $\lambda'$, we have to show that for any
$c\in\R$ with $\lambda'\leqslant c<\lambda$ we have
$\J(\A^{\lambda'})= \J(\A^{c})$. Suppose the contrary, i.e.,
$\J(\A^{\lambda'}) \varsupsetneq \J(\A^{c})$. By Corollary
\ref{semicontMI}, this $c$ should satisfy $\lfloor \lambda e_i - k_i
\rfloor
> e^{\lambda'}_i$ or equivalently
$c\geqslant\frac{k_i+1+e_i^{\lambda'}}{e_i}$ for some $E_i$, and
this contradicts the fact that $\lambda$ is the minimum of these
rational numbers.
\end{proof}

The above result for the case  $\lambda'=0$ gives a mild
generalization of the well-known formula for the log-canonical
threshold in the smooth case. We point out that the antinef closure
of $\lfloor -K_\pi \rfloor$ is $0$ whenever $X$ is smooth or, more
generally, when it has log-terminal singularities.

\begin{corollary}
 Let $\fa \subseteq \cO_{X,O}$ be an  ideal. Let
$D_{\lambda_0}=\sum e_i^{\lambda_0} E_i$ be the antinef closure of
$\left\lfloor - K_{\pi} \right\rfloor$. Then, $${\rm lct}(\fa)=
\min_i\left\{\frac{k_i+1+ e_i^{\lambda_0}}{e_i}\right\}.$$

\end{corollary}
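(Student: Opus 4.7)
The plan is to deduce the corollary directly from Theorem \ref{lct_ideal} by specializing to $\lambda' = 0$. The statement of Theorem \ref{lct_ideal} nominally requires $\lambda' \in \bQ_{>0}$, so the only real work is to bridge this boundary case; conceptually nothing new happens.

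My approach is a perturbation argument. First, I would choose a positive rational $\varepsilon > 0$ small enough so that (a) for every component $E_i \leqslant F$, the number $\varepsilon e_i - k_i$ has the same floor as $-k_i$, and (b) $\varepsilon < {\rm lct}(\fa)$. Condition (a) is available because only finitely many numerical conditions are involved and $-k_i$ is non-integer for all but finitely many $i$ (values of $k_i$ that are already integers cause no trouble, as a small positive perturbation leaves their floor unchanged); condition (b) is automatic for $\varepsilon$ smaller than the first jumping number, which is positive. Under condition (a) we obtain the divisor equality
$$\lfloor \varepsilon F - K_{\pi} \rfloor \;=\; \lfloor -K_{\pi} \rfloor,$$
and consequently the antinef closure $D_\varepsilon$ computed via the unloading procedure of Section \ref{preli} coincides with the antinef closure $D_{\lambda_0}$ of $\lfloor -K_{\pi} \rfloor$; in particular $e_i^\varepsilon = e_i^{\lambda_0}$ for every $i$.

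Now I would apply Theorem \ref{lct_ideal} to $\lambda' = \varepsilon$. It produces the jumping number consecutive to $\varepsilon$, namely
$$\lambda \;=\; \min_i \left\{\frac{k_i + 1 + e_i^{\varepsilon}}{e_i}\right\} \;=\; \min_i \left\{\frac{k_i + 1 + e_i^{\lambda_0}}{e_i}\right\}.$$
By condition (b), the jumping number consecutive to $\varepsilon$ is precisely ${\rm lct}(\fa) = \lambda_1$, because no jumping number lies in the interval $(0,\lambda_1)$ and $\J(\fa^\varepsilon) = \J(\fa^{\lambda_0})$ (this last equality follows from $D_\varepsilon = D_{\lambda_0}$ together with Proposition \ref{equiv_antinef}). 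Combining these two identifications yields the desired formula.

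The main conceptual point, rather than an obstacle, is simply checking that the perturbation argument is legitimate: that for sufficiently small $\varepsilon > 0$ the floor of $\varepsilon F - K_{\pi}$ does not drop below $\lfloor -K_{\pi} \rfloor$. This is immediate from the continuity of the floor function at non-integer points, together with the observation that integer values of $\varepsilon e_i - k_i$ can only occur at a discrete set of $\varepsilon$. Alternatively, one could simply inspect the proof of Theorem \ref{lct_ideal} and verify that it goes through verbatim at $\lambda'=0$, since the only input is Corollary \ref{semicontMI}, which is valid without any positivity restriction on $\lambda'$.
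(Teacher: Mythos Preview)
Your proposal is correct and follows the same approach as the paper: the corollary is simply Theorem~\ref{lct_ideal} specialized to $\lambda'=0$. The paper states no separate proof, treating the specialization as immediate; you are more careful in observing that the theorem is nominally stated for $\lambda'\in\bQ_{>0}$ and in supplying a perturbation argument (or, as you also note, one can simply check that the proof of Theorem~\ref{lct_ideal} goes through verbatim at $\lambda'=0$, which is indeed the case since Corollary~\ref{semicontMI} requires no positivity on $\lambda'$).
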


Another easy application of the results above is the following
result that should be well-known to experts.

\begin{corollary}
Let $  {\lambda}_1$ be the log-canonical threshold of an ideal $\fa
\subseteq \cO_{X,O}$ and assume that $X$ has at most a log-terminal
singularity at $O$. Then $\J(\fa^{\lambda_1})=\fM$.
\end{corollary}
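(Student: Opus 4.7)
The plan is to prove the equality by sandwiching $\J(\fa^{\lambda_1})$ between $\fM$ and $\cO_{X,O}$, and then concluding by maximality of $\fM$.

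First, I would specialise the preceding corollary under the log-terminal hypothesis. Since $k_i>-1$ for every exceptional $E_i$, the divisor $\lfloor -K_\pi\rfloor$ has non-positive coefficients, so its antinef closure is trivially $D_{\lambda_0}=0$. The preceding corollary then collapses to
$$\lambda_1 \;=\; \min_i \frac{k_i+1}{e_i},$$
which yields the uniform bound $\lambda_1 e_i - k_i \leq 1$ on every component $E_i$ in the support of $F$.

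Next, the strict inclusion $\J(\fa^{\lambda_1})\subsetneq \cO_{X,O}$ follows from $\lambda_1$ being a jumping number, together with the identity $\J(\fa^0)=\pi_*\cO_{X'}(\lceil K_\pi\rceil)=\cO_{X,O}$: in the log-terminal case $\lceil K_\pi\rceil$ is an effective exceptional divisor, so the pushforward is $\cO_{X,O}$ by normality of $X$.

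For the reverse inclusion $\fM\subseteq \J(\fa^{\lambda_1})$, I would take any $f\in \fM$ and verify the numerical condition $v_i(f)\geq \lfloor\lambda_1 e_i - k_i\rfloor$ componentwise (cf.\ Corollary \ref{semicontMI}). Since every exceptional $E_i$ lies over $O$, vanishing of $f$ at $O$ forces $v_i(f)\geq 1$. On components with $e_i>0$ the bound from the first step gives $\lfloor\lambda_1 e_i - k_i\rfloor\leq 1\leq v_i(f)$, while on components with $e_i=0$ the log-terminal inequality $-k_i<1$ gives $\lfloor -k_i\rfloor\leq 0\leq v_i(f)$. Hence $f\in\J(\fa^{\lambda_1})$, and maximality of $\fM$ combined with the previous step forces $\J(\fa^{\lambda_1})=\fM$.

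The only (minor) obstacle is verifying the uniform bound $\lfloor\lambda_1 e_i-k_i\rfloor\leq 1$ on every $E_i$; this rests precisely on $k_i>-1$ (log-terminality) together with the minimality characterisation of $\lambda_1$, and is the only place the log-terminal hypothesis enters beyond the closed formula for $\lambda_1$. The argument implicitly assumes $\fa$ is $\fM$-primary, so that $F$ contributes no affine part along which $f$ would be forced to vanish.
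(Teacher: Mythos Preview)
Your proposal is correct and follows essentially the same route as the paper: both derive $\lambda_1=\min_i\frac{k_i+1}{e_i}$ from log-terminality, obtain $\lfloor \lambda_1 e_i-k_i\rfloor\leqslant 1$ on every component, and sandwich $\J(\fa^{\lambda_1})$ strictly between $\fM$ and $\cO_{X,O}$. The paper compresses your explicit valuation check for $f\in\fM$ into a bare citation of Proposition~\ref{prop_clausura}, and your remark about the implicit $\fM$-primary hypothesis (needed so that $F$ has no affine part) is a fair observation that applies equally to the paper's own argument.
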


\begin{proof}
Since $X$ has at most a log-terminal singularity, the log-canonical
threshold is $${\rm lct}(\fa)= \lambda_1=
\min_i\left\{\frac{k_i+1}{e_i}\right\}$$ so it satisfies $\lambda_1
\leqslant \frac{k_i+1}{e_i}$ for any divisor $E_i$ and equality is
achieved at least for a given divisor. In particular,  for all $E_i$
we have
$$ \lfloor \lambda_1 e_i -k_i \rfloor \leqslant 1.$$ It follows from Proposition \ref{prop_clausura}
 that $\fM \subseteq \J(\fa^{\lambda_1}) \varsubsetneq \cO_{X,O}$ and we get the desired result.
\end{proof}

\vskip 2mm

For non log-terminal singularities we may find examples where the
codimension as $\bC$-vector spaces of
$\J(\fa^{\lambda_0})\varsupsetneq \J(\fa^{\lambda_1})$ might be
bigger than $1$ (see Example \ref{ex2}).

\vskip 2mm

Combining Theorem \ref{lct_ideal} and the unloading procedure
described in Section \S\ref{unloading}  we can describe a very
simple algorithm that allows us to compute the chain of multiplier
ideals:

\vskip 2mm

\begin{algorithm} { (Jumping Numbers and  Multiplier Ideals)} \label{A2}

\vskip 2mm

\noindent {\tt Input:} { A log-resolution of an 
 ideal} $\fa \subseteq \cO_{X,O}$. 

\noindent {\tt Output:} List of Jumping Numbers of $\fa$ and its
corresponding
Multiplier Ideals. 

 \vskip 2mm

Set $\lambda_0=0$ and compute the antinef closure
$D_{\lambda_0}=\sum e_i^{\lambda_0}E_i$ of $\lfloor - K_{\pi}
\rfloor$ using the unloading
  procedure.   From $j=1$ , incrementing by $1$

\vskip 2mm

\begin{itemize}

\item[\textbf{(Step j)}]

 \begin{enumerate}
  \item[$\cdot$] {\bf Jumping number}: Compute
     $$\lambda_j=\min_i\left\{\frac{k_i+1+e_i^{\lambda_{j-1}}}{e_i}\right\}.$$

       \vskip 2mm

  \item [$\cdot$] {\bf Multiplier ideal}: Compute the antinef closure $D_{\lambda_j}=\sum e_i^{\lambda_j}E_i$ of $\lfloor \lambda_j F - K_{\pi} \rfloor$ using the unloading
  procedure.


 \end{enumerate}

 \end{itemize}

\end{algorithm}

\vskip 2mm

Notice that we may also find all the multiplier ideals in any given
interval $[c',c]$ of the real line. In this case, our starting point
would be computing the antinef closure $D_{c'}$ of $\lfloor c' F -
K_{\pi} \rfloor$. To illustrate this method we consider an easy
example in a smooth variety.

\begin{example}\label{ex1}
Consider the ideal $\fa=(x^2y^2,x^5,y^5,xy^4,x^4y) \subseteq
\bC\{x,y\}$. We represent the relative canonical divisor $K_\pi$ and
the divisor $F$ in the dual graph as follows:

\setlength{\unitlength}{7.5mm}

\begin{center}
 \begin{tabular}{ccc}

   \begin{tikzpicture}[scale=0.9]
   \draw  (-2,0) -- (2,0);
   \draw [dashed,->,thick] (1,0) -- (2,1);
   \draw [dashed,->,thick] (-1,0) -- (-2,1);
   \draw (-.2,-0.3) node {{\tiny $E_1$}};
   \draw (-2.2,-0.3) node {{\tiny $E_2$}};
   \draw (-1.2,-0.3) node {{\tiny $E_3$}};
   \draw (1.8,-0.3) node {{\tiny $E_4$}};
   \draw (0.8,-0.3) node {{\tiny $E_5$}};
   \filldraw  (0,0) circle (2pt)
              (2,0) circle (2pt)
              (-2,0) circle (2pt);
   \filldraw  [fill=white]  (-1,0) circle (3pt)
              (1,0) circle (3pt);
 \end{tikzpicture} &
\hspace{10mm} \begin{tikzpicture}[scale=0.9]
   \draw  (-2,0) -- (2,0);
   \draw [dashed,->,thick] (1,0) -- (2,1);
   \draw [dashed,->,thick] (-1,0) -- (-2,1);
   \draw (-.2,-0.3) node {$1$};
   \draw (-2.2,-0.3) node {$2$};
   \draw (-1.2,-0.3) node {$4$};
   \draw (1.8,-0.3) node {$2$};
   \draw (0.8,-0.3) node {$4$};
   \filldraw  (0,0) circle (2pt)
              (2,0) circle (2pt)
              (-2,0) circle (2pt);
   \filldraw  [fill=white]  (-1,0) circle (3pt)
              (1,0) circle (3pt);
 \end{tikzpicture}
& \hspace{10mm}  \begin{tikzpicture}[scale=0.9]
   \draw  (-2,0) -- (2,0);
   \draw [dashed,->,thick] (1,0) -- (2,1);
   \draw [dashed,->,thick] (-1,0) -- (-2,1);
   \draw (-.2,-0.3) node {$4$};
   \draw (-2.2,-0.3) node {$5$};
   \draw (-1.2,-0.3) node {$10$};
   \draw (1.8,-0.3) node {$5$};
   \draw (0.8,-0.3) node {$10$};
   \filldraw  (0,0) circle (2pt)
              (2,0) circle (2pt)
              (-2,0) circle (2pt);
   \filldraw  [fill=white]  (-1,0) circle (3pt)
              (1,0) circle (3pt);
 \end{tikzpicture}\\
\parbox{30mm}{\begin{center} Vertex ordering\end{center}}&
\parbox{30mm}{\begin{center} $K_\pi$\end{center}}&
\parbox{30mm}{\begin{center} $F$\end{center}}

 \end{tabular}

\end{center}

The blank dots correspond to dicritical divisors and their excesses
are represented by broken arrows\footnote{The broken arrows also
represent the branches of the strict transform of a curve defined by
a generic $f\in \fa$.}. For simplicity we will collect the values of
any divisor in a vector. To start with we have $K_\pi=(1,2,4,2,4)$
and $F=(4,5,10,5,10)$. In the algorithm we will have to perform some
unloading steps so we will have to consider the intersection matrix
$M=(E_i\cdot E_j)_{1\leqslant i,j \leqslant 5}$
$$M=\left(\begin{array}{rrrrr}
      -5&    0&  1&  0&  1\\
       0&   -2&  1&  0&  0\\
       1&    1& -1&  0&  0\\
       0&    0&  0& -2&  1\\
       1&    0&  0&  1& -1\\
        \end{array}
\right).
$$

\vskip 5mm

The algorithm is performed as follows:

\vskip 2mm

$\bullet$ We start computing the log-canonical threshold:
$$\lambda_1= {\rm lct}(\fa)=\min_i\left\{\frac{k_i+1}{e_i}\right\}= \min_i\left\{\frac{2}{4},\frac{3}{5},\frac{5}{10},\frac{3}{5},\frac{5}{10}\right\}=\frac{1}{2}.$$
The divisor $\lfloor\frac{1}{2}F - K_\pi\rfloor=(1,0,1,0,1)$ is not
antinef { since it has excess $-1$ at $E_2$ and $E_4$. The first unloading step is to consider the divisor
$\lfloor\frac{1}{2}F - K_\pi\rfloor +E_2+E_4=(1,1,1,1,1)$. This divisor has excess $-1$ at $E_3$ and $E_5$ so we need to perform a 
second unloading step to obtain the antinef closure} $D_{\lambda_1}=(1,1,2,1,2)$.

\vskip 2mm

$\bullet$ The second Jumping Number is:
$$\lambda_2=\min_i\left\{\frac{k_i+1 + e_i^{\lambda_{1}}}{e_i}\right\}=
\min_i\left\{\frac{2+1}{4},\frac{3+1}{5},\frac{5+2}{10},\frac{3+1}{5},\frac{5+2}{10}\right\}=\frac{7}{10}.$$
Then we get $\lfloor\frac{7}{10}F - K_\pi\rfloor=(1,1,3,1,3)$. { It has excess $-1$ at $E_1,E_2$ and $E_4$ and 
we obtain the divisor $(2,2,3,2,3)$ after the first unloading step. This divisor has excess $-1$ at $E_3$ and $E_5$ 
and, after a second unloading step, we obtain the antinef closure} $D_{\lambda_2}=(2,2,4,2,4)$.

\vskip 2mm

$\bullet$ The third Jumping Number is:
$$\lambda_3=\min_i\left\{\frac{k_i+1 + e_i^{\lambda_{2}}}{e_i}\right\}=
\min_i\left\{\frac{2+2}{4},\frac{3+2}{5},\frac{5+4}{10},\frac{3+2}{5},\frac{5+4}{10}\right\}=\frac{9}{10}.$$
Then we get $\lfloor\frac{9}{10}F - K_\pi\rfloor=(2,2,5,2,5)$ { that has excess $-1$ at $E_3$ and $E_5$. After a
single unloading step we get the antinef closure} $D_{\lambda_3}=(2,3,5,3,5)$.

\vskip 2mm

$\bullet$ The fourth Jumping Number is:
$$\lambda_4=\min_i\left\{\frac{k_i+1 + e_i^{\lambda_{3}}}{e_i}\right\}=
\min_i\left\{\frac{2+2}{4},\frac{3+3}{5},\frac{5+5}{10},\frac{3+3}{5},\frac{5+5}{10}\right\}=
1.$$ Then we get $\lfloor F - K_\pi\rfloor= D_{\lambda_4}
=(3,3,6,3,6)$ since this divisor is antinef.

\vskip 2mm

$\bullet$ The fifth Jumping Number is:
$$\lambda_5=\min_i\left\{\frac{k_i+1 + e_i^{\lambda_{4}}}{e_i}\right\}=
\min_i\left\{\frac{2+3}{4},\frac{3+3}{5},\frac{5+6}{10},\frac{3+3}{5},\frac{5+6}{10}\right\}=\frac{11}{10}.$$
Then we get $\lfloor\frac{11}{10}F - K_\pi\rfloor=(3,3,7,3,7)$ and, { after a single unloading step, we obtain the
antinef closure} $D_{\lambda_5}=(3,4,7,4,7)$.

\vskip 3mm

Now we will compute the chain of multiplier ideals of the  plane
curve defined by $f=(x^2-y^3)(y^2-x^3)\in \bC\{x,y\}$. The product of two
cusps sharing the origin $O$ is a generic element of the ideal
$\fa=(x^2y^2,x^5,y^5,xy^4,x^4y)$ considered above, so
$\J(f^\lambda)=\J(\A^\lambda)$ for $\lambda < 1$. This example will
illustrate how the non-exceptional components affect the unloading
procedure and, consequently, the list of jumping numbers for
$\lambda > 1$.

\vskip 3mm

Denote the total transform of the curve defined by  $f$ simply as
$F$. We represent the relative canonical divisor $K_\pi$ and the
divisor $F$ in the dual graph as follows:

\setlength{\unitlength}{7.5mm}

\begin{center}
 \begin{tabular}{ccc}

    \begin{tikzpicture}[scale=0.9]
   \draw  (-2,0) -- (2,0);
   \draw [dashed] (1,0) -- (2,1);
   \draw [dashed] (-1,0) -- (-2,1);
   \draw (-.2,-0.3) node {{\tiny $E_1$}};
   \draw (-2.2,-0.3) node {{\tiny $E_2$}};
   \draw (-1.2,-0.3) node {{\tiny $E_3$}};
   \draw (1.8,-0.3) node {{\tiny $E_4$}};
   \draw (0.8,-0.3) node {{\tiny $E_5$}};
   \draw (-1.8,1.3) node {{\tiny $E_6$}};
   \draw (1.8,1.3) node {{\tiny $E_7$}};
   \filldraw  (0,0) circle (2pt)
              (2,0) circle (2pt)
              (-2,0) circle (2pt);
   \filldraw  [fill=white]  (-1,0) circle (3pt)
              (1,0) circle (3pt);
   \filldraw  [fill=grey]  (-2,1) circle (3pt)
              (2,1) circle (3pt);
 \end{tikzpicture} &
\hspace{10mm}  \begin{tikzpicture}[scale=0.9]
   \draw  (-2,0) -- (2,0);
   \draw [dashed,thick] (1,0) -- (2,1);
   \draw [dashed,thick] (-1,0) -- (-2,1);
   \draw (-.2,-0.3) node {$1$};
   \draw (-2.2,-0.3) node {$2$};
   \draw (-1.2,-0.3) node {$4$};
   \draw (1.8,-0.3) node {$2$};
   \draw (0.8,-0.3) node {$4$};
   \draw (-1.8,1.3) node {{$0$}};
   \draw (1.8,1.3) node {{$0$}};
   \filldraw  (0,0) circle (2pt)
              (2,0) circle (2pt)
              (-2,0) circle (2pt);
   \filldraw  [fill=white]  (-1,0) circle (3pt)
              (1,0) circle (3pt);
   \filldraw  [fill=grey]  (-2,1) circle (3pt)
              (2,1) circle (3pt);
 \end{tikzpicture}
\hspace{10mm}   \begin{tikzpicture}[scale=0.9]
   \draw  (-2,0) -- (2,0);
   \draw [dashed,thick] (1,0) -- (2,1);
   \draw [dashed,thick] (-1,0) -- (-2,1);
   \draw (-.2,-0.3) node {$4$};
   \draw (-2.2,-0.3) node {$5$};
   \draw (-1.2,-0.3) node {$10$};
   \draw (1.8,-0.3) node {$5$};
   \draw (0.8,-0.3) node {$10$};
   \draw (-1.8,1.3) node {{$1$}};
   \draw (1.8,1.3) node {{$1$}};
   \filldraw  (0,0) circle (2pt)
              (2,0) circle (2pt)
              (-2,0) circle (2pt);
   \filldraw  [fill=white]  (-1,0) circle (3pt)
              (1,0) circle (3pt);
   \filldraw  [fill=grey]  (-2,1) circle (3pt)
              (2,1) circle (3pt);
 \end{tikzpicture}\\
\parbox{30mm}{\begin{center} Vertex ordering\end{center}}&
\parbox{20mm}{\hskip -1.3cm  $K_\pi$ }&
\parbox{20mm}{ \hskip -2.5cm $F$ }

 \end{tabular}

\end{center}

\vskip 5mm

The gray dots will represent here the affine components
belonging to the strict transform of the curve. The intersection
matrix is now
$$M=\left(\begin{array}{rrrrrcc}
      -5&    0&  1&  0&  1& 0&  0\\
       0&   -2&  1&  0&  0& 0&  0\\
       1&    1& -1&  0&  0& 1&  0\\
       0&    0&  0& -2&  1& 0&  0\\
       1&    0&  0&  1& -1& 0&  1\\
        \end{array}
\right).
$$

\vskip 8mm

The algorithm is performed as follows:

\vskip 2mm

$\bullet$ The log-canonical threshold is:
$$\lambda_1= {\rm lct}(\fa)=\min_i\left\{\frac{k_i+1}{e_i}\right\}=
\min_i\left\{\frac{2}{4},\frac{3}{5},\frac{5}{10},\frac{3}{5},\frac{5}{10},
\frac{1}{1},\frac{1}{1}\right\}=\frac{1}{2}.$$ We get
$\lfloor\frac{1}{2}F - K_\pi\rfloor=(1,0,1,0,1,0,0)$ and, { as in the previous example, its antinef
closure is} $D_{\lambda_1}=(1,1,2,1,2,0,0)$.

\vskip 2mm

$\bullet$ The second Jumping Number is:
$$\lambda_2=\min_i\left\{\frac{k_i+1 + e_i^{\lambda_{1}}}{e_i}\right\}=
\min_i\left\{\frac{2+1}{4},\frac{3+1}{5},\frac{5+2}{10},\frac{3+1}{5},\frac{5+2}{10},\frac{1}{1},\frac{1}{1}\right\}=\frac{7}{10}.$$
Then we get $\lfloor\frac{7}{10}F - K_\pi\rfloor=(1,1,3,1,3,0,0)$
and its antinef closure $D_{\lambda_2}=(2,2,4,2,4,0,0)$.

\vskip 2mm

$\bullet$ The third Jumping Number is:
$$\lambda_3=\min_i\left\{\frac{k_i+1 + e_i^{\lambda_{2}}}{e_i}\right\}=
\min_i\left\{\frac{2+2}{4},\frac{3+2}{5},\frac{5+4}{10},\frac{3+2}{5},\frac{5+4}{10},\frac{1}{1},\frac{1}{1}\right\}=\frac{9}{10}.$$
Then we get $\lfloor\frac{9}{10}F - K_\pi\rfloor=(2,2,5,2,5,0,0)$
and its antinef closure $D_{\lambda_3}=(2,3,5,3,5,0,0)$.

\vskip 2mm

$\bullet$ The fourth Jumping Number is:
$$\lambda_4=\min_i\left\{\frac{k_i+1 + e_i^{\lambda_{3}}}{e_i}\right\}=
\min_i\left\{\frac{2+2}{4},\frac{3+3}{5},\frac{5+5}{10},\frac{3+3}{5},\frac{5+5}{10},\frac{1}{1},\frac{1}{1}\right\}=
1.$$ Then we get $\lfloor F - K_\pi\rfloor=(3,3,6,3,6,1,1)$ but this
divisor is not antinef because of the non-exceptional components.
{ Namely, we have excess $-1$ at $E_3$ and $E_5$.  To  obtain the antinef closure 
$D_{\lambda_4}=(4,5,10,5,10,1,1)$ we need to perform seven unloading steps with the intermediate divisors: 

\begin{itemize}
 \item[$\cdot$]  $(3,3,7,3,7,1,1)$ with excess $-1$ at $E_2$ and $E_4$.
  \item[$\cdot$]  $(3,4,7,4,7,1,1)$ with excess $-1$ at $E_3$ and $E_5$.
   \item[$\cdot$]  $(3,4,8,4,8,1,1)$ with excess $-1$ at $E_1$.
    \item[$\cdot$]  $(4,4,8,4,8,1,1)$ with excess $-1$ at $E_3$ and $E_5$.
     \item[$\cdot$]  $(4,4,9,4,9,1,1)$ with excess $-1$ at $E_2$ and $E_4$.
      \item[$\cdot$]  $(4,5,9,5,9,1,1)$ with excess $-1$ at $E_3$ and $E_5$.
   
\end{itemize}
 }

 \vskip 2mm 
 If we compare with the $\fM$-primary ideal $\fa$ we should notice that the
affine components of $\lfloor F - K_\pi \rfloor$ force us to add
more exceptional components when computing its antinef closure and
consequently, this will give a different jumping number in the next
step.

\vskip 2mm

$\bullet$ The fifth Jumping Number is:
$$\lambda_5=\min_i\left\{\frac{k_i+1 + e_i^{\lambda_{4}}}{e_i}\right\}=
\min_i\left\{\frac{2+4}{4},\frac{3+5}{5},\frac{5+10}{10},\frac{3+5}{5},\frac{5+10}{10},\frac{2}{1},\frac{2}{1}\right\}=\frac{3}{2}.$$
Then we get $\lfloor\frac{3}{2}F - K_\pi\rfloor=(5,5,11,5,11,1,1)$
and its antinef closure $D_{\lambda_5}=(5,6,12,6,12,1,1)$.

\end{example}

{

Consider a normal surface $X$ with a singularity at $O$. Given a minimal resolution $\pi:X' \lra X$  of $X$,
Artin \cite{Art66} introduced the {\it fundamental cycle} as the unique smallest non-zero effective divisor 
with exceptional support that is antinef. Moreover he proved that the singularity is rational if and only 
if the arithmetical genus of the  fundamental cycle is zero. 

\vskip 2mm 

We have that $\pi$ is also a minimal log-resolution 
of the maximal ideal $\fM \subseteq \cO_{X,O}$ and the fundamental cycle is the divisor $F$ such that 
$\fM\cdot\cO_{X'} = \cO_{X'}\left(-F\right)$. To compute its arithmetical genus we can use the formula
$ p_a(F)=1+ \frac{(K_\pi + F)\cdot F}{2}$ (see \cite{Art62}). 

\vskip 2mm 

This characterization gives us a good source of examples of surfaces with rational singularities.

 \begin{example} \label{ex2}

Consider a surface $X$  with a rational singularity at $O$ whose 
minimal resolution $\pi:X' \lra X$ has six exceptional components $E_1,\dots, E_6$
with the following dual graph and intersection matrix:

\begin{center}

\begin{tikzpicture}[scale=0.9]
   \draw  (-1,1) -- (0,0);
   \draw  (-1,0) -- (1,0);
   \draw  (1,1) -- (0,0);
   \draw  (0,1) -- (0,0);
   \draw (.2,-0.3) node {{\tiny $E_1$}};
   \draw (-1.2,-0.3) node {{\tiny $E_2$}};
   \draw (-1.2,1.3) node {{\tiny $E_3$}};
   \draw (.2,1.3) node {{\tiny $E_4$}};
   \draw (1.2,1.3) node {{\tiny $E_5$}};
   \draw (1.2,-0.3) node {{\tiny $E_6$}};
   \filldraw  (-1,1) circle (2pt)
              (-1,0) circle (2pt)
              (0,0) circle (2pt)
              (0,1) circle (2pt)
              (1,1) circle (2pt)
              (1,0) circle (2pt);
 \end{tikzpicture}

  \end{center}
 $$\left(\begin{array}{rrrrrr}
    -4&  1& 1&  1&  1&  1\\
    1&  -5& 0&  0&  0&  0\\
    1&   0& -5& 0&  0&  0\\
    1&   0& 0&  -5& 0&  0\\
    1&   0& 0&  0&  -5& 0\\
    1&   0& 0&  0&  0&  -5\\
        \end{array}
\right)$$

The  fundamental cycle is the divisor $F=(2,1,1,1,1,1)$ and the relative canonical divisor 
is $K_\pi= (-\frac{5}{3}, -\frac{14}{15}, -\frac{14}{15}, -\frac{14}{15},-\frac{14}{15}, -\frac{14}{15})$ so the
singularity is not even log-canonical.

 \vskip 2mm 
 
The multiplier ideals corresponding to $\lambda_0=0$ and
$\lambda_1={\rm lct}(\fM)=\frac{4}{9}$ are given by the antinef
divisors $D_{\lambda_0}=(2,1,1,1,1,1)$ and
$D_{\lambda_1}=(3,1,1,1,1,1)$. Notice that
 $\J(\fM^{\lambda_0})=\fM$ and, using the techniques of \cite{ACAMDCGA13}, we  get that
the codimension between these multiplier ideals is $4$.
\end{example}}

\subsection{Implementation}
We have implemented  Algorithm \ref{A2} in the Computer Algebra
system  {\tt Macaulay 2} \cite{GS}. The scripts of the source code
as well as the output in full detail of some examples are
available at the web page
\begin{center}
  {\tt  www.pagines.ma1.upc.edu/\textasciitilde{}jalvz/multiplier.html}
\end{center}
We implemented Tucker's Algorithm \ref{A1} as well in order to
compare both approaches. Of course, once we have the list of jumping
numbers we may use the unloading procedure of Section
\S\ref{unloading} to describe the corresponding multiplier ideals.
We have also implemented this extended version of Tucker's algorithm
and it turns out that our method is much faster.

\vskip 2mm For example, we have tested the case of an $\fM$-primary
ideal $\A$ whose corresponding dual graph has $35$ vertices
distributed in three branches only sharing the origin and each
branch has three rupture divisors.

 \begin{center}
  \begin{tikzpicture}[scale=0.9]
   \draw  (0,0) -- (1,1);
   \draw  (1,1) -- (4,1);
   \draw  (2,1) -- (3,2);
   \draw  (3,1) -- (4,2);
   \draw  (4,1) -- (5,2);
   \draw  (0,0) -- (9,0);
   \draw  (5,0) -- (6,1);
   \draw  (7,0) -- (8,1);
   \draw  (9,0) -- (10,1);
   \draw  (10,1) -- (11,1);
   \draw  (0,0) -- (1,-1);
   \draw  (1,-1) -- (8,-1);
   \draw  (7,-2) -- (8,-3);
   \draw  (6,1) -- (7,1);
   \draw  (4,-1) -- (5,-2);
   \draw  (5,-2) -- (7,-2);
   \draw  (6,-2) -- (7,-3);
   \draw [dashed,->,thick] (9,0) -- (10,0);
   \draw [dashed,->,thick] (4,1) -- (5,1);
   \draw [dashed,->,thick] (7,-2) edge (8,-2);
   \draw (3.2,2.2) node {\tiny $E_3$};
   \draw (4.2,2.2) node {\tiny $E_5$};
   \draw (5.2,2.2) node {\tiny $E_7$};
   \draw (0.8,1.2) node {\tiny $E_2$};
   \draw (0.8,0.25) node {\tiny $E_{22}$};
   \draw (1.8,0.25) node {\tiny $E_{23}$};
   \draw (2.8,0.25) node {\tiny $E_{25}$};
   \draw (3.8,0.25) node {\tiny $E_{27}$};
   \draw (5.8,0.25) node {\tiny $E_{29}$};
   \draw (7.8,0.25) node {\tiny $E_{32}$};
   \draw (1.2,-0.8) node {\tiny $E_{9}$};
   \draw (2.3,-0.8) node {\tiny $E_{10}$};
   \draw (3.3,-0.8) node {\tiny $E_{11}$};
   \draw (4.3,-0.8) node {\tiny $E_{16}$};
   \draw (5.3,-0.8) node {\tiny $E_{15}$};
   \draw (6.3,-0.8) node {\tiny $E_{14}$};
   \draw (7.3,-0.8) node {\tiny $E_{13}$};
   \draw (8.3,-0.8) node {\tiny $E_{12}$};
   \draw (5.3,-1.8) node {\tiny $E_{17}$};
   \draw (6.3,-1.8) node {\tiny $E_{19}$};
   \draw (7.3,-1.8) node {\tiny $E_{21}$};
   \draw (7.3,-2.8) node {\tiny $E_{18}$};
   \draw (8.3,-2.8) node {\tiny $E_{20}$};
   \draw (6.3,1.2) node {\tiny $E_{26}$};
   \draw (7.3,1.2) node {\tiny $E_{24}$};
   \draw (8.3,1.2) node {\tiny $E_{30}$};
   \draw (10.3,1.2) node {\tiny $E_{34}$};
   \draw (11.3,1.2) node {\tiny $E_{33}$};
   \draw (-.2,0.2) node {\tiny $E_{1}$};
   \draw (4.8,0.25) node {\tiny $E_{28}$};
   \draw (6.8,0.25) node {\tiny $E_{31}$};
   \draw (8.8,0.25) node {\tiny $E_{35}$};
   \draw (1.8,1.2) node {\tiny $E_{4}$};
   \draw (2.8,1.2) node {\tiny $E_{6}$};
   \draw (3.8,1.2) node {\tiny $E_{8}$};
   \filldraw[fill=white]   (0,0) circle (3pt)
              (2,1) circle (3pt)
              (3,1) circle (3pt)
              (4,1) circle (3pt)
              (5,0) circle (3pt)
              (7,0) circle (3pt)
              (9,0) circle (3pt)
              (4,-1) circle (3pt)
              (6,-2) circle (3pt)
              (7,-2) circle (3pt);
   \filldraw  (1,1) circle (2pt)
              (3,2) circle (2pt)
              (5,2) circle (2pt)
              (4,2) circle (2pt)
              (1,0) circle (2pt)
              (2,0) circle (2pt)
              (3,0) circle (2pt)
              (4,0) circle (2pt)
              (6,0) circle (2pt)
              (8,0) circle (2pt)
              (10,1) circle (2pt)
              (11,1) circle (2pt)
              (1,-1) circle (2pt)
              (2,-1) circle (2pt)
              (3,-1) circle (2pt)
              (5,-2) circle (2pt)
              (7,-3) circle (2pt)
              (8,-3) circle (2pt)
              (5,-1) circle (2pt)
              (6,-1) circle (2pt)
              (7,-1) circle (2pt)
              (8,-1) circle (2pt)
              (6,1) circle (2pt)
              (7,1) circle (2pt)
              (8,1) circle (2pt);
 \end{tikzpicture}

\end{center}

This example has $56986$ jumping numbers in the interval $(0,2]$.
Using the extended version of Tucker's algorithm it takes $897.298 $
seconds to compute the whole list of jumping numbers and their
corresponding multiplier ideals. Using our method it only takes
$372.165$ seconds, i.e. it is roughly $9$ minutes faster.

\vskip 2mm

The main difference between the two algorithms stems in the fact
that Tucker needs to find first all the possible critical divisors.
We will see in the next section that our algorithm can be understood
as a method to find a unique and very precise contributing divisor.

\vskip 2mm

{ The input that we use in both algorithms, i.e. the log-resolution $\pi: X' \rightarrow X$ of an ideal 
$\fa \subseteq \cO_{X}$, is encoded using the intersection matrix and the vector of values for the divisor $F$ such
that $\fa\cdot\cO_{X'} = \cO_{X'}\left(-F\right)$. 
An algorithm to compute this data from a set of generators of the ideal $\fa$ has been described in \cite{ACAMB15}.
An implementation in {\tt Macaulay 2} will be available soon. For principal ideals this can be done using the 
{\tt Singular} \cite{GPS03}  package {\tt alexpoly.lib}.

}

\section{Jumping Divisors} \label{jumping_divisor}

The theory of critical divisors developed by Tucker \cite{Tuc10}
focuses on complete ideals very close to a given multiplier ideal.
The aim of this section is to understand the whole jump between two
consecutive multiplier ideals. To such purpose we introduce the
following natural definition:

\begin{definition}
 Let $ \lambda$ be a jumping numbers of an  ideal $\fa \subseteq \cO_{X,O}$.
 A reduced divisor  $G\leqslant F$ for which $\lambda$ is a candidate jumping number
 is called a {\it jumping divisor}
 for $\lambda$ if $$\J({\A}^{\lambda -\varepsilon})= \pi_{*}\Oc_{X'}(\lceil K_\pi- \lambda F\rceil +G),$$
 for $\varepsilon$ small enough.  We say that a jumping divisor is minimal if no proper subdivisor is a jumping divisor for $\lambda$, i.e.
$$\J({\A}^{\lambda -\varepsilon})\varsupsetneq \pi_{*}\Oc_{X'}(\lceil K_\pi- \lambda F\rceil+G')$$
for any $ 0\leqslant G'< G $.

\end{definition}

\begin{remark} \label{jd_contribution}
Any reduced divisor  $G\leqslant F$ for which $\lambda$ is a
candidate jumping number defines an ideal nested between two
consecutive multiplier ideals
$$\J({\A}^{\lambda -\varepsilon})\supseteq \pi_{*}\Oc_{X'}(\lceil K_\pi- \lambda F\rceil +G)\supseteq \J({\A}^{\lambda}).$$
Hence, a jumping divisor for $\lambda$ is a contributing divisor to
$\lambda$. In particular, a minimal jumping divisor can be
understood as  the minimal contribution which defines the preceding
multiplier ideal.
\end{remark}

\vskip 2mm

It is a striking fact that the methods used in the previous
section, in particular our main result Theorem \ref{lct_ideal}, will
allow us to construct the unique minimal jumping divisor associated
to a jumping number. In fact, we will see in Corollary
\ref{interval_jd} that the only jumping divisors are those reduced
divisors $D\leqslant F$ satisfying $G_\lambda \leqslant D \leqslant
H_\lambda$, where $G_\lambda$ and $H_\lambda$ are defined as
follows:

\begin{definition}
Let $ \lambda$ be a jumping number of an  ideal $\fa \subseteq
\cO_{X,O}$. Let $D_{{\lambda}-\varepsilon}=\sum
{e_i^{\lambda-\varepsilon}} E_i$ be the antinef closure of $\lfloor
(\lambda-\varepsilon) F - K_{\pi}\rfloor$ for $\varepsilon$ small
enough. Then we define:

\vskip 2mm

\begin{itemize}
 \item[$\cdot$] {\it Maximal jumping divisor:} Is the reduced divisor  $H_\lambda \leqslant F$ supported on those components
$E_i$ for which $\lambda e_i - k_i \in \bZ_{>0}$. Equivalently
 $$H_\lambda= \lceil K_\pi- (\lambda-\varepsilon) F\rceil - \lceil K_\pi- \lambda F\rceil.$$

 \item[$\cdot$] {\it Minimal jumping divisor:} Is the reduced divisor  $G_\lambda \leqslant F$ supported on those components
$E_i$ for which $$\lambda= \frac{k_i+1+
e_i^{\lambda-\varepsilon}}{e_i},$$ i.e.  supported on those divisors
where the minimum considered in Theorem \ref{lct_ideal} is achieved.
\end{itemize}

\end{definition}

\vskip 2mm

It is clear that $H_\lambda$ is a jumping divisor and $G_\lambda
\leqslant H_\lambda$. In fact, any reduced divisor $G \leqslant F$
that contributes to $\lambda$ satisfies $G \leqslant H_\lambda$. We
will prove next that $G_\lambda$ deserves the given name.

\vskip 2mm

\begin{proposition}
 Let $\lambda$ be a jumping number of an ideal $\fa \subseteq \cO_{X,O}$.
 The reduced divisor  $G_{\lambda}$ is a jumping divisor.
\end{proposition}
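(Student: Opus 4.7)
The plan is to translate the desired equality of ideals into an equality of antinef closures. Fix $\lambda' = \lambda - \varepsilon$ with $\varepsilon > 0$ small enough that $D_{\lambda'} = \sum e_i^{\lambda'} E_i$ stabilizes (so it coincides with $D_{\lambda-\varepsilon}$ in the statement), and set $D' := \lfloor \lambda F - K_{\pi} \rfloor - G_{\lambda}$. Since $-D' = \lceil K_{\pi} - \lambda F\rceil + G_{\lambda}$ and $D_{\lambda'}$ is antinef, Proposition \ref{equiv_antinef} together with the identification $\J(\A^{\lambda'}) = \pi_{*}\cO_{X'}(-D_{\lambda'})$ reduces the statement
$$\pi_{*}\cO_{X'}(\lceil K_{\pi} - \lambda F\rceil + G_{\lambda}) = \J(\A^{\lambda'})$$
to proving the single identity $\widetilde{D'} = D_{\lambda'}$ between integral antinef divisors.

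To produce this equality I would sandwich $D'$ between $\lfloor \lambda' F - K_{\pi} \rfloor$ and $D_{\lambda'}$ component by component. For $E_i \leqslant G_{\lambda}$, the defining identity $\lambda = (k_i + 1 + e_i^{\lambda'})/e_i$ forces $\lambda e_i - k_i = 1 + e_i^{\lambda'} \in \bZ$, so $v_i(D') = e_i^{\lambda'} = v_i(D_{\lambda'})$; since $e_i^{\lambda'} \geqslant \lfloor \lambda' e_i - k_i \rfloor$ by definition of antinef closure, one also gets $v_i(D') \geqslant v_i(\lfloor \lambda' F - K_{\pi}\rfloor)$. For $E_i \not\leqslant G_{\lambda}$, the minimality of $\lambda$ in Theorem \ref{lct_ideal} yields $\lambda e_i - k_i < 1 + e_i^{\lambda'}$; because $e_i^{\lambda'}$ is already an integer this passes to the floor and gives $v_i(D') = \lfloor \lambda e_i - k_i\rfloor \leqslant e_i^{\lambda'} = v_i(D_{\lambda'})$, while $\lambda > \lambda'$ trivially yields $v_i(D') \geqslant v_i(\lfloor \lambda' F - K_{\pi}\rfloor)$. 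Components outside the support of $F$ take the same value on all three divisors. Consequently
$$\lfloor \lambda' F - K_{\pi} \rfloor \,\leqslant\, D' \,\leqslant\, D_{\lambda'}.$$

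Applying Proposition \ref{prop_clausura}(ii) to the left inequality yields $\widetilde{D'} \geqslant \widetilde{\lfloor \lambda' F - K_{\pi}\rfloor} = D_{\lambda'}$, while the right inequality together with the fact that $D_{\lambda'}$ is its own antinef closure yields $\widetilde{D'} \leqslant D_{\lambda'}$. Hence $\widetilde{D'} = D_{\lambda'}$, and one more invocation of Proposition \ref{equiv_antinef} completes the argument.

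The main obstacle I anticipate is precisely the bookkeeping on components $E_i \not\leqslant G_{\lambda}$: one must ensure that the strict inequality $\lambda e_i - k_i < 1 + e_i^{\lambda'}$ coming from the minimality in Theorem \ref{lct_ideal} descends cleanly to the floor, which is clean only because $e_i^{\lambda'}$ is an integer. Once this subtlety is handled, the proof reduces to a two-sided comparison of divisors and a single appeal to the minimality characterizing an antinef closure.
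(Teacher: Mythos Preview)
Your proof is correct and follows essentially the same approach as the paper: both establish the sandwich $\lfloor \lambda' F - K_{\pi}\rfloor \leqslant D' \leqslant D_{\lambda'}$ via the identical two-case component computation. The only cosmetic difference is that the paper reads off the two ideal inclusions directly from Proposition~\ref{prop_clausura}(i), whereas you pass through antinef closures to get $\widetilde{D'}=D_{\lambda'}$ and then invoke Proposition~\ref{equiv_antinef}; the paper also obtains the left inequality in one stroke from $G_\lambda\leqslant H_\lambda$ rather than checking it componentwise.
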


\begin{proof}
Since $G_\lambda \leqslant H_\lambda$, we have
$\lfloor(\lambda-\varepsilon) F - K_\pi\rfloor  \leqslant
\lfloor\lambda F - K_\pi\rfloor-G_{\lambda} $ and therefore
$$\J({\A}^{\lambda -\varepsilon})\supseteq \pi_{*}\Oc_{X'}(\lceil K_\pi- \lambda F\rceil +G_\lambda).$$

\vskip 2mm

For the reverse inclusion, let ${D}_{\lambda-\varepsilon}=\sum
e_i^{\lambda-\varepsilon} E_i$ be the antinef closure of
$\lfloor(\lambda-\varepsilon) F - K_\pi\rfloor$. We want to check
that $\lfloor\lambda F - K_\pi\rfloor-G_{\lambda} \leqslant
{D}_{\lambda-\varepsilon}$. To this purpose we only need to consider
the following cases:
\begin{itemize}
 \item[$\cdot$] If $E_i \leqslant G_\lambda$  then we have  $\lambda = \frac{k_i+1+e_i^{\lambda-\varepsilon}}{e_i}$.
 In particular  $ \lfloor \lambda e_i -k_i\rfloor -1 =  e_i^{\lambda-\varepsilon}$.
 \item[$\cdot$] If  $E_i \not\leqslant G_\lambda$ then we have $\lambda < \frac{k_i+1+e_i^{\lambda-\varepsilon}}{e_i}$.
 Thus $\lfloor \lambda e_i -k_i\rfloor < 1 + e_i^{\lambda-\varepsilon}$ and the
 result follows.
\end{itemize}
 \end{proof}

The unicity of the jumping divisor $G_\lambda$ is a consequence of
the following more general statement

 \begin{theorem} \label{jump1}
Let $  \lambda$ be a jumping number of an  ideal $\fa \subseteq
\cO_{X,O}$. Any contributing divisor $G\leqslant F$ associated to
$\lambda$ satisfies either:

\begin{itemize}
 \item[$\cdot$] $\J({\A}^{\lambda -\varepsilon})= \pi_{*}\Oc_{X'}(\lceil K_\pi- \lambda F\rceil +G)\varsupsetneq \J({\A}^{\lambda})$
 if and only if $G_\lambda\leqslant G$, or

  \item[$\cdot$] $\J({\A}^{\lambda -\varepsilon})\varsupsetneq \pi_{*}\Oc_{X'}(\lceil K_\pi- \lambda F\rceil+G)\varsupsetneq \J({\A}^{\lambda})$ otherwise.
\end{itemize}
\end{theorem}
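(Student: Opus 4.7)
The plan is to rewrite both sides of the claimed (in)equalities as complete ideals attached to integral divisors and then feed everything into the comparison results of the previous section. Concretely,
\[
\pi_{*}\Oc_{X'}(\lceil K_\pi - \lambda F\rceil + G) = I_{D_2}, \qquad D_2 := \lfloor \lambda F - K_\pi\rfloor - G,
\]
while $\J(\A^{\lambda-\varepsilon}) = I_{\widetilde{D_1}}$, where $D_1 := \lfloor(\lambda-\varepsilon)F - K_\pi\rfloor$ and $\widetilde{D_1} = D_{\lambda-\varepsilon}$. First I would observe that, since $G$ contributes to $\lambda$, every component of $G$ lies in $H_\lambda$, so $G \leqslant H_\lambda$. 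A direct componentwise check then shows $D_1 \leqslant D_2$: at any $E_i \leqslant H_\lambda$ the value of $D_1$ drops by exactly $1$ from that of $\lfloor \lambda F - K_\pi \rfloor$, while $D_2$ drops by $v_i(G) \in \{0,1\}$; at any $E_i \not\leqslant H_\lambda$, $D_1$ and $\lfloor\lambda F - K_\pi\rfloor$ agree at $E_i$ (for $\varepsilon$ small enough) and $v_i(G) = 0$. By Proposition \ref{prop_clausura} this gives the chain
\[
\J(\A^{\lambda-\varepsilon}) \supseteq \pi_{*}\Oc_{X'}(\lceil K_\pi - \lambda F\rceil + G) \supsetneq \J(\A^\lambda),
\]
the right-hand strict inclusion being precisely the contributing hypothesis. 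The whole question is thus whether the left-hand inclusion is an equality, and by Proposition \ref{semicont}(i) this happens if and only if $\widetilde{D_1} \geqslant D_2$, i.e. iff $v_i(D_2) \leqslant e_i^{\lambda-\varepsilon}$ for every $E_i$.

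The key step is then a pointwise comparison that uses the characterization of $\lambda$ given by Theorem \ref{lct_ideal}. At a component $E_i \leqslant G_\lambda$ the minimum $\lambda = (k_i + 1 + e_i^{\lambda-\varepsilon})/e_i$ is attained, so $\lambda e_i - k_i$ is an integer and $\lfloor \lambda e_i - k_i\rfloor = e_i^{\lambda-\varepsilon} + 1$; hence $v_i(D_2) = e_i^{\lambda-\varepsilon} + 1 - v_i(G)$, which is $\leqslant e_i^{\lambda-\varepsilon}$ if and only if $v_i(G) \geqslant 1$, i.e. $E_i \leqslant G$. At a component $E_i \not\leqslant G_\lambda$ one has the strict inequality $\lambda e_i < k_i + 1 + e_i^{\lambda-\varepsilon}$, and integrality of $e_i^{\lambda-\varepsilon}$ forces $\lfloor \lambda e_i - k_i\rfloor \leqslant e_i^{\lambda-\varepsilon}$, so $v_i(D_2) \leqslant e_i^{\lambda-\varepsilon}$ regardless of $v_i(G)\in\{0,1\}$. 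Combining both cases, the condition ``$v_i(D_2) \leqslant e_i^{\lambda-\varepsilon}$ for every $E_i$'' is exactly $G_\lambda \leqslant G$, which proves the first bullet as an ``iff''.

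For the second bullet, when $G_\lambda \not\leqslant G$ the previous analysis exhibits a component $E_i \leqslant G_\lambda$ with $v_i(G)=0$, at which $v_i(D_2) = e_i^{\lambda-\varepsilon} + 1 > v_i(\widetilde{D_1})$; Proposition \ref{semicont}(ii) then upgrades the left-hand inclusion $\J(\A^{\lambda-\varepsilon}) \supseteq \pi_{*}\Oc_{X'}(\lceil K_\pi - \lambda F\rceil + G)$ to a strict one, and the right-hand strict inclusion is again given by contribution. The only delicate point I expect is the integrality argument at components $E_i\not\leqslant G_\lambda$: the numerical strict inequality $\lambda e_i - k_i < 1 + e_i^{\lambda-\varepsilon}$ must be converted into the floor inequality $\lfloor \lambda e_i - k_i\rfloor \leqslant e_i^{\lambda-\varepsilon}$ uniformly, and this has to be checked both when $E_i\leqslant H_\lambda$ (so $\lambda e_i-k_i\in\bZ_{>0}$) and when $E_i\not\leqslant H_\lambda$ (so $\lambda e_i - k_i$ is not a positive integer and $v_i(G)=0$); everything else is a direct application of the machinery set up in Section \S\ref{main_algorithm}.
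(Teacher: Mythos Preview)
Your proof is correct and follows essentially the same route as the paper: both reduce the question to the criterion of Proposition~\ref{semicont} applied to $D_1=\lfloor(\lambda-\varepsilon)F-K_\pi\rfloor$ and $D_2=\lfloor\lambda F-K_\pi\rfloor-G$, and both settle it by the componentwise comparison governed by Theorem~\ref{lct_ideal}. The only structural difference is that the paper first records separately that $G_\lambda$ is a jumping divisor and then, for the direction $G_\lambda\leqslant G$, invokes that fact via the inequality $\lfloor\lambda F-K_\pi\rfloor-G\leqslant\lfloor\lambda F-K_\pi\rfloor-G_\lambda$, whereas you carry out the componentwise verification directly inside the proof; the underlying computation is identical.
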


\begin{proof}
Since $G \leqslant H_\lambda$, we have $\lfloor(\lambda-\varepsilon)
F - K_\pi\rfloor  \leqslant \lfloor\lambda F - K_\pi\rfloor-G $ and
therefore
$$\J({\A}^{\lambda -\varepsilon})\supseteq \pi_{*}\Oc_{X'}(\lceil K_\pi- \lambda F\rceil +G).$$

\vskip 2mm

Now assume $G_\lambda\leqslant G$. Then $\lfloor\lambda F -
K_\pi\rfloor-G \leqslant \lfloor\lambda F - K_\pi\rfloor-G_\lambda$,
and using the fact that $G_\lambda$ is a jumping divisor we obtain
the equality $\J({\A}^{\lambda -\varepsilon})=
\pi_{*}\Oc_{X'}(\lceil K_\pi- \lambda F\rceil +G).$

\vskip 2mm

If $G_\lambda \not\leqslant G$ we may consider a component
$E_i\leqslant G_\lambda$ such that $E_i\not\leqslant G$. Notice
that we have
$$v_i({D}_{\lambda-\varepsilon})=e_i^{\lambda-\varepsilon}= \lambda e_i-k_i-1 < \lambda e_i-k_i = v_i(\lfloor\lambda F - K_\pi\rfloor-G)$$
where ${D}_{\lambda-\varepsilon}=\sum e_i^{\lambda-\varepsilon} E_i$
is the antinef closure of $\lfloor(\lambda-\varepsilon) F -
K_\pi\rfloor$. Therefore, by Proposition \ref{semicont}, we get the
strict inclusion
$$\J({\A}^{\lambda -\varepsilon})\varsupsetneq \pi_{*}\Oc_{X'}(\lceil K_\pi- \lambda F\rceil+G).$$

\end{proof}

\begin{corollary}\label{unic_min_jd}
Let $\lambda$ be a jumping number of an  ideal $\fa \subseteq
\cO_{X,O}$. Then  $G_\lambda$ is the unique minimal jumping divisor
associated to $\lambda$.
\end{corollary}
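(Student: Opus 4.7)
The plan is to deduce this corollary directly from Theorem \ref{jump1} together with the proposition that precedes it, which already establishes that $G_\lambda$ is itself a jumping divisor. It remains to show two things: that every jumping divisor contains $G_\lambda$, and that this forces $G_\lambda$ to be the unique minimal one.

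First I would unwind the definitions. A jumping divisor $G$ is in particular a contributing divisor (as noted in Remark \ref{jd_contribution}, since the middle ideal $\pi_*\Oc_{X'}(\lceil K_\pi - \lambda F \rceil + G)$ equals $\J(\A^{\lambda-\varepsilon})$ which strictly contains $\J(\A^\lambda)$). Moreover, the requirement that $\lambda$ be a candidate jumping number for $G$ is exactly the condition $\lambda e_i - k_i \in \bZ_{>0}$ for each $E_i \leqslant G$, which by definition of $H_\lambda$ means $G \leqslant H_\lambda$.

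Now I apply Theorem \ref{jump1} to such a $G$. Since $G$ satisfies the equality $\J(\A^{\lambda-\varepsilon}) = \pi_*\Oc_{X'}(\lceil K_\pi - \lambda F\rceil + G)$ of the first bullet, the dichotomy in Theorem \ref{jump1} forces $G_\lambda \leqslant G$. Hence every jumping divisor contains $G_\lambda$.

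Combined with the fact that $G_\lambda$ is itself a jumping divisor, this shows $G_\lambda$ is the minimum element of the (nonempty) set of jumping divisors associated to $\lambda$ with respect to the partial order of subdivisors. In particular no proper subdivisor of $G_\lambda$ can be a jumping divisor (otherwise it would have to contain $G_\lambda$, a contradiction), so $G_\lambda$ is minimal; and any other minimal jumping divisor $G'$ would satisfy $G_\lambda \leqslant G'$ together with the minimality of $G'$, forcing $G' = G_\lambda$. I do not anticipate any real obstacle here, since all the content has been packaged into Theorem \ref{jump1}; the proof is essentially an order-theoretic observation that a set with a minimum element has a unique minimal element, namely that minimum.
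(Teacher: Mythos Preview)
Your proposal is correct and follows exactly the approach the paper intends: the corollary is stated as a direct consequence of Theorem \ref{jump1}, and you have simply made explicit the order-theoretic observation that since every jumping divisor $G$ falls into the first alternative of Theorem \ref{jump1} (hence $G_\lambda \leqslant G$), and $G_\lambda$ is itself a jumping divisor by the preceding proposition, $G_\lambda$ is the minimum and therefore the unique minimal jumping divisor.
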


Notice that Theorem \ref{jump1} also describes all the jumping
divisors associated to a given jumping number. Namely, we have

\begin{corollary}\label{interval_jd}
 Let $\lambda$ be a jumping number of an  ideal $\fa \subseteq \cO_{X,O}$. Then, any reduced divisor in the
 interval   $G_\lambda \leqslant D \leqslant H_\lambda$ is a jumping divisor.
\end{corollary}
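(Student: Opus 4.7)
The plan is to prove this by a squeeze argument using the two ends of the interval. Since we have already established that $G_{\lambda}$ is a jumping divisor (in the preceding proposition) and that $H_{\lambda}$ is a jumping divisor (noted right after its definition), and since the pushforward functor $\pi_{*}\Oc_{X'}(-)$ is monotone in the divisor, any $D$ trapped between them should inherit the property.

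First I would verify that $\lambda$ is a candidate jumping number for $D$. By definition, $H_{\lambda}$ is supported exactly on the components $E_{i}$ with $\lambda e_{i} - k_{i} \in \bZ_{>0}$; since $D \leqslant H_{\lambda}$ is reduced, every component $E_{i} \leqslant D$ also lies in $H_{\lambda}$, so this condition is inherited by $D$.

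Next I would chain the inequalities $G_{\lambda} \leqslant D \leqslant H_{\lambda}$ to deduce the divisor inequality
$$\lceil K_{\pi} - \lambda F \rceil + G_{\lambda} \;\leqslant\; \lceil K_{\pi} - \lambda F \rceil + D \;\leqslant\; \lceil K_{\pi} - \lambda F \rceil + H_{\lambda} \;=\; \lceil K_{\pi} - (\lambda-\varepsilon)F \rceil,$$
where the final equality is just the definition of $H_{\lambda}$. Applying $\pi_{*}\Oc_{X'}(-)$, which reverses no inclusions (larger divisor gives larger ideal of global sections), yields
$$\pi_{*}\Oc_{X'}\bigl(\lceil K_{\pi} - \lambda F\rceil + G_{\lambda}\bigr) \subseteq \pi_{*}\Oc_{X'}\bigl(\lceil K_{\pi} - \lambda F\rceil + D\bigr) \subseteq \J(\A^{\lambda-\varepsilon}).$$

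The leftmost ideal equals $\J(\A^{\lambda-\varepsilon})$ because $G_{\lambda}$ is a jumping divisor. Hence the middle term is squeezed to $\J(\A^{\lambda-\varepsilon})$ as well, which is precisely the definition of $D$ being a jumping divisor. There is no real obstacle here: the whole corollary is a clean consequence of the monotonicity of $\pi_{*}\Oc_{X'}(-)$ together with the fact that the two extremes $G_{\lambda}$ and $H_{\lambda}$ of the interval have already been shown to be jumping divisors. If desired one can also phrase this proof as a direct application of Theorem \ref{jump1}, observing that $D \geqslant G_{\lambda}$ places $D$ in the first bullet of that dichotomy.
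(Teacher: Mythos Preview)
Your proof is correct and is essentially the same as the paper's: the paper simply remarks that the corollary follows from Theorem \ref{jump1}, and your squeeze argument is exactly the content of the first bullet in the proof of that theorem (using $G_\lambda \leqslant D$ for one inclusion and $D \leqslant H_\lambda$ for the other). You also correctly check the candidate condition, which is needed for the definition of jumping divisor.
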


\vskip 2mm

It is clear from its definition that maximal jumping divisors are
periodic, i.e. $H_{\lambda}= H_{\lambda + 1}$
 for any jumping number $\lambda$.  On the other hand, critical divisors do not satisfy any periodicity condition. One may
 find examples where  a divisor $G$ is a critical divisor for the jumping number $\lambda$ but not for $\lambda +1$ and vice versa.
 For minimal jumping divisors we have:

\begin{proposition}
Let $\lambda$ be a jumping number of an ideal $\fa \subseteq
\cO_{X,O}$ and  $G_{\lambda}$ its associated minimal jumping
divisor. Then we have:

\begin{itemize}
 \item [i)]  If $\lambda \leqslant 1$ then $G_\lambda \leqslant G_{\lambda + 1}$.
 \item [ii)] If $\lambda > 1$ then $G_\lambda = G_{\lambda + 1}$.
 \end{itemize}

\end{proposition}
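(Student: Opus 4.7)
The plan is to recast both parts as a comparison between the antinef closures $D_{\lambda-\varepsilon}$ and $D_{\lambda+1-\varepsilon}$, where $\varepsilon>0$ is small enough that both values are stable. Part (i) will follow from a one-sided inequality that costs us nothing; part (ii) will require upgrading that inequality to an equality via Skoda's theorem.

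The first step, valid for any jumping number, is to establish the universal bound $D_{\lambda+1-\varepsilon}\leqslant D_{\lambda-\varepsilon}+F$. The key observation is that $F$ itself is antinef on the exceptional locus, since $-F\cdot E_j=\rho_j\geqslant 0$ for all $E_j$; hence $D_{\lambda-\varepsilon}+F$ is antinef, and as it dominates $\lfloor(\lambda-\varepsilon)F-K_\pi\rfloor+F=\lfloor(\lambda+1-\varepsilon)F-K_\pi\rfloor$, Lemma \ref{def_antinef} gives the bound. In coordinates, $e_i^{\lambda+1-\varepsilon}\leqslant e_i^{\lambda-\varepsilon}+e_i$ for every $i$.

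For part (i), take $E_i\leqslant G_\lambda$, so by definition $e_i^{\lambda-\varepsilon}=\lambda e_i-k_i-1$; in particular $(\lambda+1)e_i-k_i\in \mathbb{Z}_{>0}$. For $\varepsilon$ small the lower bound $D_{\lambda+1-\varepsilon}\geqslant\lfloor(\lambda+1-\varepsilon)F-K_\pi\rfloor$ yields $e_i^{\lambda+1-\varepsilon}\geqslant(\lambda+1)e_i-k_i-1=e_i^{\lambda-\varepsilon}+e_i$, which combined with the universal inequality forces equality. Thus $E_i$ realises the value $\lambda+1$ in the minimum of Theorem \ref{lct_ideal} applied to $\lambda'=\lambda+1-\varepsilon$. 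Since that minimum is the next jumping number (hence strictly greater than $\lambda+1-\varepsilon$) and is now seen to be at most $\lambda+1$, it must equal $\lambda+1$: so $\lambda+1$ is genuinely a jumping number, $G_{\lambda+1}$ is well-defined, and $E_i\leqslant G_{\lambda+1}$.

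For part (ii) the hypothesis $\lambda>1$ guarantees $\lambda+1-\varepsilon>2$ for $\varepsilon$ small, and Skoda's theorem gives the equality $\J(\fa^{\lambda+1-\varepsilon})=\fa\cdot\J(\fa^{\lambda-\varepsilon})$. Using the Lipman correspondence and the completeness of products of complete ideals on a rational surface singularity (Lipman's generalization of Zariski's factorization), the right-hand side is represented by the antinef divisor $D_{\lambda-\varepsilon}+F$. This upgrades the universal inequality to the equality $D_{\lambda+1-\varepsilon}=D_{\lambda-\varepsilon}+F$, i.e.\ $e_i^{\lambda+1-\varepsilon}=e_i^{\lambda-\varepsilon}+e_i$ for every $i$. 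Then the defining condition of $G_{\lambda+1}$ becomes $e_i^{\lambda+1-\varepsilon}=(\lambda+1)e_i-k_i-1$, which is equivalent to $e_i^{\lambda-\varepsilon}=\lambda e_i-k_i-1$, i.e.\ to $E_i\leqslant G_\lambda$. The principal technical point and main obstacle is precisely this last step: translating Skoda's ideal equality into the divisorial identity $D_{\lambda+1-\varepsilon}=D_{\lambda-\varepsilon}+F$ in the presence of possibly non-complete or non-$\fM$-primary $\fa$, which I would handle by passing to integral closures (multiplier ideals are integrally closed) and invoking the completeness of products on a rational surface singularity.
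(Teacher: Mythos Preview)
Your proof is correct and follows essentially the same strategy as the paper's: both arguments hinge on the coefficient inequality $e_i^{(\lambda+1)-\varepsilon}\leqslant e_i^{\lambda-\varepsilon}+e_i$ for part (i) and its upgrade to an equality via Skoda's theorem for part (ii). The only cosmetic differences are that the paper phrases (i) as a contradiction against the containment $\fa\cdot\J(\fa^{\lambda-\varepsilon})\subseteq\J(\fa^{(\lambda-\varepsilon)+1})$ rather than proving your universal bound directly from the antinef-closure minimality, and that you make explicit (via Lipman's product formula) the passage from Skoda's ideal equality to the divisorial identity $D_{(\lambda+1)-\varepsilon}=D_{\lambda-\varepsilon}+F$, which the paper states in one line.
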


\begin{proof}

Assume that there exists a prime divisor $E_i\leqslant G_\lambda$
such that $E_i\nleqslant G_{\lambda +1}$. Then, for a sufficiently
small $\varepsilon >0$ we have
$$\lambda = \frac{k_i+1+ e_i^{\lambda - \varepsilon}}{e_i} \hskip 5mm \text{and} \hskip 5mm\lambda +1 < \frac{k_i+1+ e_i^{(\lambda - \varepsilon)+1}}{e_i}$$
where
$D_{\lambda - \varepsilon}= \sum e_i^{\lambda - \varepsilon} E_i$
denotes the antinef closure of $\lfloor(\lambda - \varepsilon) F
-K_\pi\rfloor$ and equivalently, $D_{(\lambda - \varepsilon)+1}=
\sum e_i^{(\lambda - \varepsilon)+1} E_i$ is the antinef closure of
$\lfloor((\lambda - \varepsilon)+1) F - K_\pi\rfloor$.

\vskip 2mm

Therefore $$\frac{k_i+1+ e_i^{\lambda - \varepsilon}}{e_i} +1 <
\frac{k_i+1+ e_i^{(\lambda - \varepsilon)+1}}{e_i}$$ or equivalently
$e_i^{\lambda - \varepsilon} + e_i < e_i^{(\lambda -
\varepsilon)+1}$. Then we have $\A \cdot \J(\A^{\lambda -
\varepsilon}) \not \subseteq \J(\A^{(\lambda - \varepsilon)+1})$ so
we get a contradiction.

\vskip 2mm

For $\lambda > 1$ we have an equality $e_i^{\lambda - \varepsilon} +
e_i = e_i^{(\lambda - \varepsilon)+1}$ because of Skoda's theorem so
the result follows.
\end{proof}

\vskip 2mm

Let $  {\lambda}' < \lambda$ be two consecutive jumping numbers of
an  ideal $\fa \subseteq \cO_{X,O}$. It is quite surprising that the
minimal jumping divisor $G_\lambda$ gives such nice approach to the
understanding of the jump from $\J(\A^{\lambda})$ to its preceding
multiplier ideal $\J(\A^{\lambda'})$. Taking into account that its
construction is based on Theorem \ref{lct_ideal}, where $\lambda$ is
obtained from the antinef divisor associated to $\J(\A^{\lambda'})$,
it would seem more natural to consider the jump in the other
direction. It turns out that the jump from $\J(\A^{\lambda'})$ to
$\J(\A^{\lambda})$ does not behave that nicely.

\begin{proposition} \label{jump2}
Let $  {\lambda}' < \lambda$ be two consecutive jumping numbers of
an  ideal $\fa \subseteq \cO_{X,O}$ and $D_{\lambda'}$ be the
antinef closure of $\lfloor\lambda' F - K_\pi \rfloor$. Then we
have:

\begin{itemize}
 \item[i)]  $\J(\A^{\lambda'})\varsupsetneq \pi_{*}\Oc_{X'}(-D_{\lambda'}-G_{\lambda})= \J(\A^{\lambda})$.

 \item[ii)] $\J(\A^{\lambda'})\varsupsetneq \pi_{*}\Oc_{X'}(\lceil K_\pi -(\lambda-\varepsilon) F \rceil - G_{\lambda})= \J(\A^{\lambda})$
\end{itemize}

\end{proposition}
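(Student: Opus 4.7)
The plan is to deduce both equalities by showing that the antinef closures of the divisors $D_{\lambda'}+G_\lambda$ (for part (i)) and $D'+G_\lambda$ (for part (ii), where $D':=\lfloor(\lambda-\varepsilon)F-K_\pi\rfloor$) are both equal to $D_\lambda$. Proposition \ref{equiv_antinef} then translates these closure identities into the desired equalities of ideals, and the strict inclusions $\J(\A^{\lambda'})\varsupsetneq(\cdot)$ follow at once from $D_{\lambda'}\neq D_\lambda$ together with Proposition \ref{semicont_antinef}, since $\lambda'<\lambda$ are consecutive jumping numbers.

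For part (i) I prove $\widetilde{D_{\lambda'}+G_\lambda}=D_\lambda$ by two opposite inequalities. The defining identity $\lambda e_i = k_i+1+e_i^{\lambda'}$ of $G_\lambda$ (Theorem \ref{lct_ideal}) together with the strict inequality $\lambda e_i<k_i+1+e_i^{\lambda'}$ off $G_\lambda$ yields, by a short component-wise check, $D_{\lambda'}+G_\lambda\geqslant\lfloor\lambda F-K_\pi\rfloor$; Proposition \ref{prop_clausura}(ii) then gives $\widetilde{D_{\lambda'}+G_\lambda}\geqslant D_\lambda$. Conversely, from $\J(\A^\lambda)\subsetneq\J(\A^{\lambda'})$ and Proposition \ref{semicont_antinef} one obtains $D_\lambda\geqslant D_{\lambda'}$; on $E_i\leqslant G_\lambda$ the bound $v_i(D_\lambda)\geqslant v_i(\lfloor\lambda F-K_\pi\rfloor)=e_i^{\lambda'}+1=v_i(D_{\lambda'})+1$ upgrades this to $D_\lambda\geqslant D_{\lambda'}+G_\lambda$. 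Since $D_\lambda$ is antinef, minimality of the antinef closure (Lemma \ref{def_antinef}) forces $\widetilde{D_{\lambda'}+G_\lambda}\leqslant D_\lambda$.

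For part (ii), after rewriting $\lceil K_\pi-(\lambda-\varepsilon)F\rceil=-D'$, the task is to show $\widetilde{D'+G_\lambda}=D_\lambda$. The central observation is that $v_i(D')=v_i(D_{\lambda'})$ for every $E_i\leqslant G_\lambda$: indeed $G_\lambda\subseteq H_\lambda$ ensures $\lambda e_i-k_i\in\Z$, so $v_i(D')=\lambda e_i-k_i-1$, while the definition of $G_\lambda$ gives $e_i^{\lambda'}=\lambda e_i-k_i-1$ as well. Now Proposition \ref{prop_clausura}(ii) applied to $D'\leqslant D'+G_\lambda$ yields $D_{\lambda'}=\widetilde{D'}\leqslant\widetilde{D'+G_\lambda}$; combined with $\widetilde{D'+G_\lambda}\geqslant D'+G_\lambda$ and the equality $v_i(D')=v_i(D_{\lambda'})$ on $G_\lambda$, a short component-wise check (the step I expect to need the most care) gives $\widetilde{D'+G_\lambda}\geqslant D_{\lambda'}+G_\lambda$. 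Lemma \ref{Lem1} applied to $D'+G_\lambda\leqslant D_{\lambda'}+G_\lambda$ then forces $\widetilde{D'+G_\lambda}=\widetilde{D_{\lambda'}+G_\lambda}$, which equals $D_\lambda$ by part (i).
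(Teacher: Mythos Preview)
Your proof is correct. Part (i) is essentially the same argument as the paper's, just reorganized around the single claim $\widetilde{D_{\lambda'}+G_\lambda}=D_\lambda$ rather than two ideal inclusions.

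Part (ii), however, takes a genuinely different and somewhat cleaner route. The paper proves (ii) independently of (i): it introduces an auxiliary divisor $D$ with a three-case piecewise definition (distinguishing $E_i\leqslant G_\lambda$, $E_i\leqslant H_\lambda\setminus G_\lambda$, and the rest), checks that both $\lfloor(\lambda-\varepsilon)F-K_\pi\rfloor+G_\lambda$ and $\lfloor\lambda F-K_\pi\rfloor$ lie below $D$, and then sandwiches to conclude $\widetilde{D}=D_\lambda$ equals the antinef closure in question. Your approach instead reduces (ii) to (i): using the key observation that $v_i(D')=e_i^{\lambda'}$ on $G_\lambda$, you show $\widetilde{D'+G_\lambda}\geqslant D_{\lambda'}+G_\lambda$ and invoke Lemma~\ref{Lem1} to conclude $\widetilde{D'+G_\lambda}=\widetilde{D_{\lambda'}+G_\lambda}$, which equals $D_\lambda$ by part (i). This avoids the auxiliary construction entirely and makes transparent why the two statements are really the same computation; the paper's argument, by contrast, is self-contained and does not rely on having proved (i) first.
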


\begin{proof}

 Let $D_{\lambda'}= \sum e_i^{\lambda'}E_i$, $D_{\lambda}= \sum
e_i^{\lambda}E_i$ be the antinef closures of $\lfloor \lambda' F -
K_\pi \rfloor$ and $\lfloor \lambda F - K_\pi \rfloor$ respectively.

\vskip 2mm

i) Since $G_\lambda$ is a jumping divisor we have $\lfloor \lambda F
- K_\pi \rfloor -G_\lambda \leqslant D_{\lambda'}$, and
 hence $\lfloor \lambda F - K_\pi \rfloor  \leqslant D_{\lambda'} + G_\lambda$. This gives the inclusion
 $\pi_{*}\Oc_{X'}(-D_{\lambda'}-G_{\lambda})\subseteq \J(\A^{\lambda})$.

 \vskip 2mm

In order to check the reverse inclusion
$\pi_{*}\Oc_{X'}(-D_{\lambda'}-G_{\lambda})\supseteq
\J(\A^{\lambda})$, it is enough, using Proposition \ref{semicont},
to prove $v_i(D_{\lambda'}+G_{\lambda}) \leqslant
v_i(D_{\lambda})=e_i^{\lambda}$ for any component $E_i$. We have
$e_i^{\lambda'} \leqslant e_i^{\lambda}$ just because
$\J(\A^{\lambda'})\varsupsetneq  \J(\A^{\lambda})$ and the
inequality is strict when $E_i \leqslant G_\lambda$, so the result
follows.

\vskip 2mm

ii) Let $D'$ be the antinef closure of $\lfloor (\lambda -
\varepsilon) F - K_\pi \rfloor + G_\lambda$. Since $G_\lambda
\leqslant H_\lambda$ we have
$$\lfloor (\lambda - \varepsilon) F - K_\pi \rfloor + G_\lambda \leqslant \lfloor \lambda F - K_\pi \rfloor \leqslant D_\lambda$$
so the inclusion $\pi_{*}\Oc_{X'}(\lceil K_\pi
-(\lambda-\varepsilon) F \rceil - G_{\lambda})\supseteq
\J(\A^{\lambda})$ holds. In order to prove the reverse inclusion we
will introduce an auxiliary divisor $D=\sum d_iE_i \in \Lambda$
defined as follows:
\begin{itemize}
   \item[$\cdot$]  $d_i= \lfloor (\lambda-\varepsilon)e_i-k_i\rfloor +1$  \hskip 2mm {\rm if} \hskip 2mm $E_i\leqslant G_\lambda$,
   \item[$\cdot$]  $d_i= e_i^{\lambda'}$ \hskip 33mm if  $E_i\leqslant H_\lambda$ but $E_i\not\leqslant G_\lambda$,
   \item[$\cdot$]  $d_i= \lfloor (\lambda-\varepsilon)e_i-k_i\rfloor $  \hskip 10mm otherwise.
  \end{itemize}

  \vskip 2mm

  Clearly we have  $\lfloor (\lambda - \varepsilon) F - K_\pi \rfloor + G_\lambda \leqslant D$, but we also have
$\lfloor \lambda  F - K_\pi \rfloor  \leqslant D$. Indeed,

  \vskip 2mm

\begin{itemize}
   \item[$\cdot$]  For $E_i\leqslant G_\lambda$ we have
   $\lfloor \lambda e_i -k_i \rfloor= \lambda e_i -k_i = \lfloor (\lambda-\varepsilon)e_i-k_i\rfloor +1= d_i$.

   \item[$\cdot$]  If $\lambda$ is a candidate for $E_i$ but $E_i\not\leqslant G_\lambda$, $\lfloor \lambda e_i -k_i \rfloor = \lambda e_i -k_i < 1 + e_i^{\lambda'}$,
   hence $\lfloor \lambda e_i -k_i \rfloor \leqslant e_i^{\lambda'}=d_i$.

   \item[$\cdot$]  Otherwise $\lfloor \lambda e_i -k_i \rfloor = \lfloor (\lambda-\varepsilon)e_i-k_i\rfloor = d_i$.
  \end{itemize}

    \vskip 2mm

  Therefore, taking antinef closures, we have $D' \leqslant D_{\lambda} \leqslant \widetilde{D}$. On the other hand $D\leqslant D'$.
  Namely, $v_i(D')\geqslant e_i^{\lambda'}$ at any $E_i$ because $\lfloor \lambda' F - K_\pi \rfloor \leqslant \lfloor (\lambda - \varepsilon) F - K_\pi \rfloor + G_\lambda$.
  Moreover, $v_i(D')\geqslant \lfloor (\lambda-\varepsilon)e_i-k_i\rfloor +\delta_i^{G_\lambda}$ by definition of antinef closure. Here,
  $\delta_i^{G_\lambda}=1$ if $E_i\leqslant G_\lambda$ and zero otherwise. Thus $v_i(D')\geqslant v_i(D)$ as desired.
 As a consquence $\widetilde{D} \leqslant D'$, which together with the previous  $D' \leqslant D_{\lambda} \leqslant \widetilde{D}$,
 gives $\widetilde{D} = D'=  D_{\lambda}$ and the result follows.
\end{proof}

\begin{remark}
 Contrary to the case of Theorem \ref{jump1}, $G_\lambda$ may not be minimal. In fact, we will see in Example \ref{ex_jump}
 a divisor $G<G_\lambda$ satisfying:
 $$\J(\A^{\lambda'})=\pi_{*}\Oc_{X'}(-D_{\lambda'})\varsupsetneq \pi_{*}\Oc_{X'}(-D_{\lambda'}-G)= \J(\A^{\lambda})\,.$$
\end{remark}

Despite the fact that the antinef closure of both
$\lfloor(\lambda-\varepsilon) F - K_\pi\rfloor$ and $\lfloor\lambda'
F - K_\pi\rfloor$ is $D_{\lambda'}$, it is quite remarkable that the
above jumping property does not hold taking $\lfloor\lambda' F -
K_\pi\rfloor$,
 i.e. the equality $\pi_{*}\Oc_{X'}(\lfloor\lambda' F - K_\pi\rfloor-G_{\lambda})= \J(\A^{\lambda})$
 is not always true.

\subsection{Invariance of the minimal jumping divisor with respect to the log-resolution}

Multiplier ideals and jumping numbers are known to be independent of
the chosen log-resolution of the initial ideal $\fa \subseteq
\Oc_{X,O}$. The aim of this section is to prove that the minimal
jumping divisor is {\it generically} independent of the
log-resolution in a sense that we will make precise below. As a
consequence of Proposition \ref{min_min} and Corollary
\ref{critical_min} in Section \S5, critical divisors will also be
generically independent of the log-resolution. This is a remarkable
fact since, as it was pointed out by Tucker in \cite[Remark
3.4]{Tuc10}, there is no reason to believe that critical divisors
(and by extension minimal jumping divisors) are independent of the
resolution since they depend on all the divisorial valuations
appearing in $F$.

\vskip 2mm

We start fixing some notation that we will use in this section. Let
$\pi':X'\lra X$ be the minimal log-resolution of an ideal $\fa
\subseteq  \Oc_{X,O}$. Any other log-resolution $\pi:Y\lra X$
factors through $\pi'$, i.e. there is a birational morphism $g:Y\lra
X'$ such that $\pi= \pi' \circ g$ (see \cite[Theorem 4.1]{Lip69}).

\vskip 2mm

For a given jumping number $\lambda$ of $\fa$ we will denote
$G'_\lambda$ the minimal jumping divisor of $\pi'$ and
$E'_1,\dots,E'_r$ the exceptional components of $E'=Exc(\pi')$. If
$G_\lambda$ and $E_1,\dots,E_s$ are the minimal jumping divisor and
the exceptional components of $E=Exc(\pi)$ for any other
log-resolution $\pi$, we will enumerate them setting $E_i$ equal to
the strict transform by $g$ of $E'_i$ for $1\leqslant i \leqslant
r$. If no confusion arise, we will use the same symbol to denote a
divisor $D=\sum_{i=1}^r d_i E'_i$ on $X'$ or its strict transform
$D=\sum_{i=1}^r d_i E_i$ on $Y$.

\begin{theorem} \label{invariance}
With the previous notations, $G_\lambda$ is independent of the
log-resolution $\pi$  if and only if $\pi$ does not include any
blowing-up at points in the intersection of two components of the
minimal jumping divisor $G'_\lambda$ of the minimal log-resolution.
\end{theorem}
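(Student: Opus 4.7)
The plan is to factor $g : Y \to X'$ as a sequence of point blowups $Y = Y_n \to Y_{n-1} \to \cdots \to Y_0 = X'$ and proceed by induction on $n$, reducing to the analysis of a single blowup $Y_{k+1} \to Y_k$ at a point $p$, producing a new exceptional divisor $E_\ast$. I will show (i) that strict transforms of components of the minimal jumping divisor on $Y_k$ keep their status on $Y_{k+1}$, and (ii) that $E_\ast$ joins the minimal jumping divisor on $Y_{k+1}$ if and only if $p$ lies on two components of the minimal jumping divisor on $Y_k$. Combined with the inductive hypothesis (which identifies the minimal jumping divisor on $Y_k$ with the strict transform of $G'_\lambda$) this yields both directions: the ``if'' direction because no bad blowup ever adds a new component to $G_\lambda$; the ``only if'' direction because the first bad blowup introduces an $E_\ast \in G_\lambda$ which then persists as a strict transform under all later blowups by (i).

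The crucial ingredient is a pullback identity $g^\ast D_{\lambda-\varepsilon}^{(X')} = D_{\lambda-\varepsilon}^{(Y)}$ between antinef closures. First, the pullback of an antinef divisor under a point blowup of a smooth surface is antinef, which can be checked directly from the projection formula together with $g^\ast D \cdot E_\ast = 0$. Second, $g^\ast D_{\lambda-\varepsilon}^{(X')}$ and $D_{\lambda-\varepsilon}^{(Y)}$ define the same ideal $\J(\A^{\lambda-\varepsilon})$, so Lemma \ref{def_antinef} forces equality. It follows that $e_\ast^{\lambda-\varepsilon} = \sum_{E_i \ni p} e_i^{\lambda-\varepsilon}$, while the standard smooth-blowup formulas give $e_\ast = \sum_{E_i \ni p} e_i$ and $k_\ast = 1 + \sum_{E_i \ni p} k_i$ (using the convention $k_i = 0$ for affine components).

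Substituting into the characterization provided by Theorem \ref{lct_ideal}, a short algebraic manipulation yields
\[
\lambda e_\ast - k_\ast - 1 - e_\ast^{\lambda-\varepsilon} \;=\; \sum_{E_i \ni p}\bigl(\lambda e_i - k_i - 1 - e_i^{\lambda-\varepsilon}\bigr) \;+\; \bigl(\#\{E_i \ni p\} - 2\bigr).
\]
By Theorem \ref{lct_ideal} each summand is non-positive, vanishing exactly when $E_i$ belongs to the minimal jumping divisor on $Y_k$. Since simple normal crossings forces $\#\{E_i \ni p\} \leqslant 2$, the left-hand side vanishes if and only if $p$ is the intersection of exactly two components of the minimal jumping divisor on $Y_k$, proving (ii). For (i), the numbers $e_i$ and $k_i$ are manifestly unchanged under strict transform, and the pullback identity also preserves $e_i^{\lambda-\varepsilon}$, so the characterization from Theorem \ref{lct_ideal} transfers verbatim. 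The main obstacle is establishing the pullback identity for antinef closures and then correctly tracking through the factorization of $g$ which intersection points remain intersections of two components of the strict transform of $G'_\lambda$; once these are in place, the arithmetic case analysis is routine.
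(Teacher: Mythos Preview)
Your proposal is correct and follows essentially the same approach as the paper: both argue by induction on the number of blowups above the minimal log-resolution, reduce to a single blowup at a point $p$, use that the antinef divisors $D_{\lambda-\varepsilon}$ on the two models share coefficients on the old components (your pullback identity is exactly how the paper justifies $e_s^{\lambda'}=e_{j_1}^{\lambda'}+e_{j_2}^{\lambda'}$), and then compute $e_\ast,k_\ast,e_\ast^{\lambda-\varepsilon}$ from the data of the components through $p$ to test the criterion of Theorem~\ref{lct_ideal}. Your unified identity $\lambda e_\ast - k_\ast - 1 - e_\ast^{\lambda-\varepsilon} = \sum_{E_i \ni p}(\lambda e_i - k_i - 1 - e_i^{\lambda-\varepsilon}) + (\#\{E_i\ni p\}-2)$ packages the paper's two cases into one line, and you are somewhat more explicit than the paper about the persistence claim (i) and about the ``only if'' direction, but the substance is the same.
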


Actually, from the proof of this result, we can express the minimal
jumping divisor of any resolution. To such purpose we need to fix
some notation:

\vskip 2mm

 A reduced divisor with exceptional support
$D=E_{i_1}+\cdots+E_{i_m} \leqslant E$ is a {\it chain with ends}
$E_{i_1}$ and $E_{i_m}$ if $a_D(E_{i_1})=a_D(E_{i_m})=1$ and
$a_D(E_{i_k})=2$ for any other $1<k<m$. Given $E_{j_1}, E_{j_2}
\leqslant E$, we say that the chain above {\it connects}  $E_{j_1}$ and
$E_{j_2}$ if $E_{j_1}\in \adj\left(E_{i_1}\right)$  and  $E_{j_2}\in
\adj\left(E_{i_m}\right)$. Observe that if $E_{j_1}$ and  $E_{j_2}$
are adjacent in $E$, a chain connecting them will be $D=0$.

\begin{corollary}
Keeping the above notations we have

\begin{equation} \label{invariance_divisor}
G_\lambda= G'_\lambda + \sum_{\stackrel{E'_i+E'_j \leqslant G'_\lambda}{E'_i\cdot E'_j=1}} D_{ij}
\end{equation}
where $D_{ij}$ is a chain connecting $E_i$ and $E_j$.
\end{corollary}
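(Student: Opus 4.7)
The plan is to factor $g\colon Y\to X'$ as a sequence of point blowups $g=g_N\circ\cdots\circ g_1$ and proceed by induction on $N$, tracking how the minimal jumping divisor changes at each step. The base case $N=0$ is vacuous. For the inductive step, I would fix an intermediate resolution $\pi_{l-1}\colon Y_{l-1}\to X$ with minimal jumping divisor $G^{(l-1)}_\lambda$, and consider a single point blowup $g_l$ at $p\in Y_{l-1}$ producing $\pi_l$.

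First I would dispose of the cases already covered by Theorem \ref{invariance}: if $p$ is \emph{not} at the intersection of two components of $G^{(l-1)}_\lambda$, then $G^{(l)}_\lambda$ is just the strict transform of $G^{(l-1)}_\lambda$, which by the induction hypothesis has the form predicted by formula (\ref{invariance_divisor}) for $\pi_l$. No new chain component is created.

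The substantial case is when $p\in\tilde E_i\cap\tilde E_j$ with both $\tilde E_i,\tilde E_j\leqslant G^{(l-1)}_\lambda$. Let $E_{\mathrm{new}}$ be the newly created exceptional divisor. Standard identities for a point blowup yield
$$e_{\mathrm{new}}=e_i+e_j,\qquad k_{\mathrm{new}}=k_i+k_j+1.$$
The key computational step is to show
$$e_{\mathrm{new}}^{\lambda-\varepsilon}=e_i^{\lambda-\varepsilon}+e_j^{\lambda-\varepsilon},$$
where these are the values at $E_{\mathrm{new}},\tilde E_i,\tilde E_j$ of the antinef closure of $\lfloor(\lambda-\varepsilon)F-K_\pi\rfloor$ on the corresponding resolution. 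The idea would be that the pullback under $g_l$ of an antinef divisor on $Y_{l-1}$ remains antinef on $Y_l$ and represents the same complete ideal (Proposition \ref{equiv_antinef}); combining this with the invariance of multiplier ideals under log-resolution (Section \S\ref{mult_ideals}), the antinef closure on $Y_l$ should coincide with the pullback of the antinef closure on $Y_{l-1}$, which at $E_{\mathrm{new}}$ has coefficient $e_i^{\lambda-\varepsilon}+e_j^{\lambda-\varepsilon}$. Granted this, the defining condition for membership in $G^{(l)}_\lambda$ follows from the mediant of equal fractions:
$$\frac{k_{\mathrm{new}}+1+e_{\mathrm{new}}^{\lambda-\varepsilon}}{e_{\mathrm{new}}}=\frac{(k_i+1+e_i^{\lambda-\varepsilon})+(k_j+1+e_j^{\lambda-\varepsilon})}{e_i+e_j}=\lambda,$$
since both $\tilde E_i$ and $\tilde E_j$ belonged to $G^{(l-1)}_\lambda$. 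Hence $E_{\mathrm{new}}\leqslant G^{(l)}_\lambda$, and by the ``easy'' direction of Theorem \ref{invariance} both $\tilde E_i,\tilde E_j$ remain in $G^{(l)}_\lambda$ too, so the chain between them grows by exactly one new vertex.

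The main obstacle I anticipate is the verification of the identity $e_{\mathrm{new}}^{\lambda-\varepsilon}=e_i^{\lambda-\varepsilon}+e_j^{\lambda-\varepsilon}$, which requires a careful comparison of antinef closures under pullback together with control over the fractional parts of $(\lambda-\varepsilon)F-K_\pi$ near $E_{\mathrm{new}}$ when passing from $Y_{l-1}$ to $Y_l$. Once this is secured, iterating along the sequence of blowups that lie over each crossing $E'_i\cap E'_j\leqslant G'_\lambda$ assembles exactly the chain $D_{ij}$ connecting $E_i$ and $E_j$ in $Y$, and summing over all such crossings yields the claimed formula.
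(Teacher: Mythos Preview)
Your proposal is correct and follows essentially the same approach as the paper: the Corollary is stated as a direct consequence of the inductive case analysis in the proof of Theorem~\ref{invariance}, which is precisely the single--blowup induction you describe. The identity $e_{\mathrm{new}}^{\lambda-\varepsilon}=e_i^{\lambda-\varepsilon}+e_j^{\lambda-\varepsilon}$ you flag as the main obstacle is simply asserted in the paper, and your suggested justification (the pullback under $g_l$ of an antinef divisor is antinef and defines the same complete ideal, hence coincides with the antinef closure on $Y_l$) is exactly the right one; this is not a genuine difficulty.
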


{ Consider {\it generic} log-resolutions as those obtained from a
minimal one by further blowing-ups at simple (and hence generic) points on the
exceptional components. Then, Theorem \ref{invariance} states that generic log-resolutions
have the same minimal jumping divisor.} This {\it generictiy} may be formulated, when $X$
is smooth, in terms of valuations in the valuative tree
$\mathcal{V}$ of Favre-Jonsson \cite{FJ04}. Consider the dual graphs
$\Gamma$ and $\Gamma'$ of $E$ and $E'$ respectively, embedded in the
valuative tree $\mathcal{V}$ as in \cite[Chapter 6]{FJ04} and let
$\nu_i$ denote the divisorial valuation centered at $E_i$.

\begin{corollary}
The minimal jumping divisor $G_\lambda$ of  $\pi$ equals the minimal
jumping divisor $G'_\lambda$ if and only if $\Gamma$ has no vertex
inside any segment $]\nu_i,\nu_j[$ for which $E'_i$ and $E'_j$ are
adjacent in $E'$ and belong to $G'_\lambda$.
\end{corollary}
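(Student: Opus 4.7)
The plan is to translate Theorem~\ref{invariance} into the language of the valuative tree $\mathcal{V}$ by identifying, dictionary-style, the extra vertices of $\Gamma$ with respect to $\Gamma'$ with the successive blow-ups needed to pass from the minimal log-resolution $\pi'$ to the log-resolution $\pi$. Recall from \cite[Chapter~6]{FJ04} that the embedding $\Gamma\hookrightarrow \mathcal{V}$ places each exceptional component $E_i$ at its divisorial valuation $\nu_i$, and the open segment $]\nu_i,\nu_j[$ between two divisorial valuations whose divisors intersect transversally on some resolution parametrizes precisely the divisorial valuations obtained by iterated blow-ups at infinitely near points lying on the successive (strict transforms of the) intersection point $E_i\cap E_j$.

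The first step is to observe that, since $\pi$ factors through $\pi'$ via $g:Y\to X'$, the extra vertices $\nu$ of $\Gamma$ not appearing in $\Gamma'$ are in bijection with the additional blow-ups that $g$ performs. For each such $\nu$, the center in $X'$ of $g$ at that stage is either a simple (smooth) point of some $E'_i$, or the intersection point of two adjacent components $E'_i\cdot E'_j=1$. In valuative-tree terms, this translates to $\nu$ sitting either in the open segment $]\nu_i,\nu_{\max}[$ attached to a single end at $\nu_i$, or in the open segment $]\nu_i,\nu_j[$ between two adjacent valuations. Thus the condition ``$\pi$ includes a blow-up at the intersection of two components $E'_i+E'_j\leqslant G'_\lambda$'' is equivalent to ``$\Gamma$ has a vertex in some open segment $]\nu_i,\nu_j[$ with $E'_i$ and $E'_j$ adjacent in $E'$ and both in $G'_\lambda$.''

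Granted this dictionary, the corollary is a direct consequence of Theorem~\ref{invariance}: the equality $G_\lambda=G'_\lambda$ holds if and only if no chain $D_{ij}$ in formula~\eqref{invariance_divisor} is nontrivial, which by the theorem amounts to $\pi$ performing no blow-up centered at the intersection of two adjacent components of $G'_\lambda$, and this is exactly the condition that no vertex of $\Gamma$ lies inside such a segment $]\nu_i,\nu_j[$.

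The only delicate point I foresee is the precise matching between the ``chain connecting $E_i$ and $E_j$'' in the dual graph of $\pi$ and the vertices of $\Gamma$ lying in $]\nu_i,\nu_j[$; one has to check that these chains account for exactly the divisorial valuations in the open segment and no others (in particular, blow-ups at simple points on a single $E'_i\leqslant G'_\lambda$ do not contribute to $G_\lambda\setminus G'_\lambda$, consistently with Theorem~\ref{invariance}). This follows from the standard description of the embedding $\Gamma\hookrightarrow\mathcal{V}$, so no new computation is required.
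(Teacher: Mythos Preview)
Your proposal is correct and follows the same route the paper takes: the corollary is stated in the paper without a separate proof, as a direct translation of Theorem~\ref{invariance} (and formula~\eqref{invariance_divisor}) into the language of the valuative tree via the embedding of the dual graph described in \cite[Chapter~6]{FJ04}. Your dictionary argument---identifying the extra vertices of $\Gamma$ in an open segment $]\nu_i,\nu_j[$ with the components of the chain $D_{ij}$, and hence with the blow-ups at (iterated) intersection points of adjacent components of $G'_\lambda$---is exactly the intended reasoning.

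One small imprecision: when you describe blow-ups at a simple point of a single $E'_i$ as landing ``in the open segment $]\nu_i,\nu_{\max}[$,'' this is not quite the right picture. Such a blow-up produces a divisorial valuation on a new branch of $\mathcal{V}$ emanating from the segment through $\nu_i$ (a new tangent direction), not on a pre-existing segment toward some $\nu_{\max}$. What matters for the argument, and what you correctly use, is only that such valuations do \emph{not} lie in any open segment $]\nu_i,\nu_j[$ between two adjacent vertices of $\Gamma'$; so the conclusion is unaffected.
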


\vskip 2mm

{\it Proof of Theorem \ref{invariance}.} \hskip 2mm 
{
Let $\lambda' < \lambda$ be two consecutive jumping numbers of $\fa$. 
We will argue by induction on the number of blowing-ups needed to reach $Y$
from a minimal resolution. In order to simplify the notation, we will assume 
throughout this proof that $X'$ also dominates a minimal log-resolution and
that $Y$ is obtained from $X'$ by one blowing-up $g:Y\lra X'$ at a closed point 
$p\in X'$ giving the exceptional component $E_s$. Assume that (\ref{invariance_divisor}) 
holds on $X'$ and let us prove it on $Y$. Notice that, keeping the notation 
used in this section, we are in the case $r+1=s$.}

\vskip 2mm

Let $F'=\sum_{i=1}^r e_i E'_i$ and $F=\sum_{i=1}^s e_i E_i$ be the
divisors in $X'$ and $Y$ respectively such that $\fa \cO_{X'}=
\cO_{X'}(-F')$ and $\fa \cO_{Y}= \cO_{Y}(-F)$. We also consider the
antinef divisors $D'_{\lambda'}=\sum_{i=1}^r e_i^{\lambda'} E'_i$
and $D_{\lambda'}=\sum_{i=1}^s e_i^{\lambda'} E_i$ for which
$\J(\fa^{\lambda'})= \pi'_* \Oc_{X'}(-D'_{\lambda'})= \pi_*
\Oc_{Y}(-D_{\lambda'})$ sharing the first $r$ coefficients since
multiplier ideals are independent of the log-resolution. Moreover,
by Theorem \ref{lct_ideal}
$$\lambda=
\min_{1\leqslant i \leqslant r}\left\{\frac{k_i+1+
e_i^{\lambda'}}{e_i}\right\}= \min_{1\leqslant i \leqslant
s}\left\{\frac{k_i+1+ e_i^{\lambda'}}{e_i}\right\},$$ clearly
demonstrating { that the strict transform of $G'_\lambda$ is contained in $G_\lambda$. In particular,
$\lambda e_i-k_i = 1+ e_i^{\lambda'}$ if and only if $E_i\leqslant
G_\lambda$ and $\lambda e_i-k_i < 1+ e_i^{\lambda'}$ otherwise.

\vskip 2mm

We distinguish two cases:

\vskip 2mm

i) The closed point $p$ lies only on one exceptional divisor
$E'_j$. Then we have $e_s=e_j$, $k_s=k_j+1$ and
$e_s^{\lambda'}=e_j^{\lambda'}$ and thus $$v_s(\lfloor \lambda F
-K_\pi\rfloor)=\lfloor \lambda e_s -k_i\rfloor=\lfloor \lambda e_j
-k_j\rfloor - 1 \leqslant  e_j^{\lambda'}=e_s^{\lambda'}.$$ Hence
$E_s$ can not belong to $G_\lambda$.

\vskip 2mm

ii) The closed point $p$ lies on the intersection of two
exceptional divisors $E'_{j_1}$ and $E'_{j_2}$. Then we have
$e_s=e_{j_1}+e_{j_2}$, $k_s=k_{j_1}+k_{j_2}+1$ and
$e_s^{\lambda'}=e_{j_1}^{\lambda'}+ e_{j_2}^{\lambda'}$ so
$$v_s(\lfloor \lambda F -K_\pi\rfloor)=\lfloor \lambda e_s
-k_s\rfloor=\lfloor \lambda e_{j_1} -k_{j_1} + \lambda e_{j_2}
-k_{j_2}\rfloor - 1 \leqslant
e_{j_1}^{\lambda'}+e_{j_2}^{\lambda'}+1=e_s^{\lambda'}+1,$$ and
equality holds if and only if $E'_{j_1}+E'_{j_2}\leqslant G_\lambda$.
In particular, $E_s$ does not belong to $G_\lambda$ whenever none or
just one of the components $E'_{j_1},E'_{j_2}$ belong to
$G'_\lambda$.\qed}

\subsection{Geometric properties of minimal jumping divisors in the dual graph}

Assume that a critical divisor $G$ associated to a jumping number
$\lambda$ has exceptional support. One of the key ingredients in
Tucker's algorithm for the computation of jumping numbers is that
$G$ satisfies some nice geometric conditions when viewed in the dual
graph: $G$ is a connected chain and its ends must be either rupture
or dicritical divisors (see Proposition \ref{T2}). Then, it is
natural to ask whether jumping divisors satisfy analogous
properties.

\vskip 2mm

Throughout this section we will also assume that the minimal jumping
divisor $G_\lambda$ has exceptional support. Then, it may have
several connected  components in the dual graph and these components
are not necessarily  chains. However, we can still control the ends
of each component. To prove the main result of this section (see
Theorem \ref{geo_dual_graph}) we need some preliminary results
first. Keep the notations of Section \S\ref{preli}.

\begin{lemma} \label{num}
Let $\lambda$ be a jumping number of an ideal $\fa \subseteq
\cO_{X,O}$. For any component $E_i$ of the minimal jumping divisor
$G_{\lambda}$ we have
$$\left(\left\lceil K_\pi-\lambda  F\right\rceil + G_{\lambda}\right)\cdot E_i = -2 + \lambda \rho_i +
\sum_{ E_j \in \adj(E_i)} \left\{\lambda e_j-k_j\right\} +
a_{G_{\lambda}}\left(E_i\right).$$
\end{lemma}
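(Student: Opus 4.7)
The plan is to expand the intersection product by writing the round-up explicitly and then applying the adjunction formula together with the definition of the excess.

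First I would rewrite
\[
\lceil K_\pi - \lambda F\rceil \;=\; K_\pi - \lambda F \;+\; \{\lambda F - K_\pi\},
\]
where $\{\lambda F - K_\pi\} = \sum_j \{\lambda e_j - k_j\}\,E_j$. This follows from the identity $\lceil x\rceil = x + \{-x\}$ (equivalently, $\lceil -y\rceil = -y + \{y\}$) applied coefficientwise. Note that since $E_i\leqslant G_\lambda$ and $\lambda$ is a candidate jumping number for $G_\lambda$, the number $\lambda e_i - k_i$ is a positive integer, so $\{\lambda e_i - k_i\}=0$; this will kill the would-be self-intersection contribution from the fractional part.

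Next I would intersect with $E_i$ term by term. For $K_\pi\cdot E_i$, the adjunction formula \eqref{eq-relative-canonical} gives $K_\pi\cdot E_i = -2 - E_i^2$. For $-\lambda F\cdot E_i$, the definition of the excess yields $-\lambda F\cdot E_i = \lambda\rho_i$. For the fractional part, since $E_j\cdot E_i$ equals $E_i^2$ when $j=i$, equals $1$ when $E_j\in\adj(E_i)$, and vanishes otherwise, one obtains
\[
\{\lambda F - K_\pi\}\cdot E_i \;=\; \{\lambda e_i - k_i\}\,E_i^2 + \sum_{E_j\in\adj(E_i)}\{\lambda e_j - k_j\} \;=\; \sum_{E_j\in\adj(E_i)}\{\lambda e_j - k_j\},
\]
using the vanishing of $\{\lambda e_i - k_i\}$ noted above. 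Finally, writing $G_\lambda = E_i + (G_\lambda - E_i)$, since the divisor $G_\lambda - E_i$ is reduced and its components distinct from $E_i$ intersect $E_i$ in $1$ or $0$ according as they are adjacent to $E_i$ in the dual graph, one gets $G_\lambda\cdot E_i = E_i^2 + a_{G_\lambda}(E_i)$.

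Summing the four contributions, the two $\pm E_i^2$ terms cancel and we are left with
\[
-2 + \lambda\rho_i + \sum_{E_j\in\adj(E_i)}\{\lambda e_j - k_j\} + a_{G_\lambda}(E_i),
\]
which is exactly the claimed formula. I do not expect any real obstacle here: the only point that must be flagged explicitly is the vanishing $\{\lambda e_i - k_i\}=0$ coming from $E_i\leqslant G_\lambda\leqslant H_\lambda$, which is what makes the self-intersection term of the fractional part disappear and produces the clean expression on the right-hand side.
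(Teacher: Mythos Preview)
Your proof is correct and follows essentially the same approach as the paper: both expand $\lceil K_\pi-\lambda F\rceil$ as $(K_\pi-\lambda F)+\{\lambda F-K_\pi\}$, use adjunction and the definition of $\rho_i$, and invoke $\{\lambda e_i-k_i\}=0$ to eliminate the self-intersection term of the fractional part. The only cosmetic difference is that the paper groups $K_\pi+E_i$ together to get $-2$ directly from adjunction, whereas you compute $K_\pi\cdot E_i=-2-E_i^2$ and $G_\lambda\cdot E_i=E_i^2+a_{G_\lambda}(E_i)$ separately and let the $\pm E_i^2$ cancel; this is purely a matter of bookkeeping.
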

\begin{proof}
For any $E_i\leqslant G_{\lambda}$  we have
\begin{multline*}
(\left\lceil K_{\pi}-\lambda F\right\rceil + G_{\lambda})\cdot E_i =
(\left(K_\pi-\lambda F\right)+\left\{-K_\pi+
\lambda F\right\} + G_{\lambda} - E_i + E_i)\cdot E_i = \\
= \left(K_{\pi}+ E_i\right)\cdot E_i - \lambda F\cdot E_i +
\left\{\lambda F-K_{\pi}\right\}\cdot E_i +
\left(G_{\lambda}-E_i\right)\cdot E_i.
\end{multline*}
Let us now compute each summand separately. Firstly, the adjunction
formula gives $\left(K_{\pi}+E_i\right)\cdot E_i = -2$ because $E_i
\cong \bP^1$. As for the second and fourth terms, the equality
$-\lambda F \cdot E_i = \lambda\rho_i$ follows from the definition
of the excesses, and clearly $a_{G_{\lambda}}\left(E_i\right) =
\left(G_{\lambda}-E_i\right)\cdot E_i$ because $E_i \leqslant
G_{\lambda}$.

Therefore it only remains to prove that
\begin{equation} \label{eq-decimal-part}
\left\{\lambda F-K_{\pi}\right\}\cdot E_i = \sum_{ E_j \in
\adj(E_i)} \left\{\lambda e_j-k_j\right\},
\end{equation}
which is also quite immediate. Indeed, writing
$$\left\{\lambda F-K_{\pi}\right\} = \sum_{j=1}^r\left\{\lambda e_j-k_j\right\}E_j,$$
equality (\ref{eq-decimal-part}) follows by observing that (for $j
\neq i$), $E_j \cdot E_i = 1$ if and only if $E_j \in
\adj\left(E_i\right)$, and the term corresponding to $j=i$ vanishes
because we have $\lambda e_i-k_i \in \bZ$.
\end{proof}

\begin{remark}
 It is important to notice that $( \lceil K_\pi-\lambda F\rceil + G_\lambda)\cdot E_i\in \Z$, that is
 $-2 + \sum_{ E_j \in \adj(E_i)} \left\{ \lambda e_j - k_j\right\}  + \lambda \rho_i + a_{G_\lambda}(E_i)\in \Z$.
\end{remark}

The following result is an analogue of the numerical conditions that
critical divisors satisfy (see Proposition \ref{Num_condition2}).
Unfortunately it does not provide a characterization of minimal
jumping divisors.

\begin{proposition} \label{Num_condition}
Let $\lambda$ be a jumping number of an  ideal $\fa \subseteq
\cO_{X,O}$. For any component $E_i\leqslant G_\lambda$ of the
minimal jumping divisor $G_{\lambda}$ we have
$$\left(\left\lceil K_{\pi}- \lambda F\right\rceil +
G_{\lambda}\right)\cdot E_i\geqslant 0.$$

\end{proposition}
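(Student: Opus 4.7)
The plan is to rewrite the claim in terms of intersections with the antinef closure representing the previous multiplier ideal, then exploit the antinef property. Since $\lceil K_\pi - \lambda F\rceil = -\lfloor \lambda F - K_\pi\rfloor$, the desired inequality is equivalent to
\[
(\lfloor \lambda F - K_\pi\rfloor - G_\lambda)\cdot E_i \leqslant 0 \quad \text{for every } E_i \leqslant G_\lambda.
\]
Let $D_{\lambda-\varepsilon} = \sum e_i^{\lambda-\varepsilon} E_i$ denote the antinef closure of $\lfloor(\lambda-\varepsilon)F - K_\pi\rfloor$. The first task is to identify the antinef closure of $\lfloor \lambda F - K_\pi\rfloor - G_\lambda$ as $D_{\lambda-\varepsilon}$, and moreover to check that these two divisors already agree on the support of $G_\lambda$.

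For the first point, since $G_\lambda$ is a jumping divisor we have $\pi_*\Oc_{X'}(-(\lfloor \lambda F - K_\pi\rfloor - G_\lambda)) = \J(\A^{\lambda-\varepsilon}) = \pi_*\Oc_{X'}(-D_{\lambda-\varepsilon})$, so by Lipman's correspondence the two divisors share the same antinef closure. Since $\lfloor(\lambda-\varepsilon)F - K_\pi\rfloor \leqslant \lfloor \lambda F - K_\pi\rfloor - G_\lambda \leqslant D_{\lambda-\varepsilon}$ (the first inequality because $G_\lambda \leqslant H_\lambda$), Lemma \ref{def_antinef} and Proposition \ref{prop_clausura} force this common antinef closure to be exactly $D_{\lambda-\varepsilon}$. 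For the coefficient agreement on $G_\lambda$, note that at each $E_i \leqslant G_\lambda$ the number $\lambda e_i - k_i$ lies in $\Z$, and the defining condition $\lambda = (k_i + 1 + e_i^{\lambda-\varepsilon})/e_i$ from Theorem \ref{lct_ideal} rearranges to $\lambda e_i - k_i - 1 = e_i^{\lambda-\varepsilon}$, which is precisely the coefficient identity needed.

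Writing $\Delta := D_{\lambda-\varepsilon} - (\lfloor \lambda F - K_\pi\rfloor - G_\lambda)$, the previous step shows $\Delta$ is an effective divisor whose support is disjoint from $G_\lambda$. For any $E_i \leqslant G_\lambda$, the intersection $\Delta\cdot E_i$ is then a sum of non-negative contributions from prime components distinct from $E_i$, hence $\Delta\cdot E_i \geqslant 0$. Combined with the antinef property $D_{\lambda-\varepsilon}\cdot E_i \leqslant 0$, this yields $(\lfloor \lambda F - K_\pi\rfloor - G_\lambda)\cdot E_i = D_{\lambda-\varepsilon}\cdot E_i - \Delta\cdot E_i \leqslant 0$, which is the required inequality. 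The main difficulty lies in the bookkeeping of the middle step: correctly pinning down the antinef closure and verifying that coefficients match exactly on $G_\lambda$ (where the characterization of $G_\lambda$ through the minimum in Theorem \ref{lct_ideal} is used essentially), after which the inequality follows routinely from the antinef condition and the effectivity of $\Delta$.
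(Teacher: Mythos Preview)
Your proof is correct and takes a genuinely different route from the paper's argument.

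The paper proceeds cohomologically: for each $E_i\leqslant G_\lambda$ it uses the short exact sequence
\[
0\to \cO_{X'}(\lceil K_\pi-\lambda F\rceil+G_\lambda-E_i)\to \cO_{X'}(\lceil K_\pi-\lambda F\rceil+G_\lambda)\to \cO_{E_i}(\lceil K_\pi-\lambda F\rceil+G_\lambda)\to 0,
\]
pushes forward to $X$, and invokes the minimality of $G_\lambda$ (Theorem~\ref{jump1}) to deduce that $H^0\bigl(E_i,\cO_{E_i}(\lceil K_\pi-\lambda F\rceil+G_\lambda)\bigr)\neq 0$; since $E_i\cong\bP^1$, this forces the degree to be non-negative. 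Your argument is instead purely numerical: you rewrite the inequality as $(\lfloor\lambda F-K_\pi\rfloor-G_\lambda)\cdot E_i\leqslant 0$, observe that $\lfloor\lambda F-K_\pi\rfloor-G_\lambda\leqslant D_{\lambda-\varepsilon}$ with equality exactly along $G_\lambda$ (this is precisely the content of the proof that $G_\lambda$ is a jumping divisor), and then read off the inequality from the antinef condition $D_{\lambda-\varepsilon}\cdot E_i\leqslant 0$ together with $\Delta\cdot E_i\geqslant 0$. Your approach is more elementary, avoiding sheaf cohomology entirely and using only the explicit description of $G_\lambda$ via the minimum in Theorem~\ref{lct_ideal}; the paper's approach is more conceptual and highlights the role of Theorem~\ref{jump1} (minimality), which your argument does not need at all.
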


\begin{proof}

Let $G_\lambda$ be the minimal jumping divisor. Given a prime
divisor $E_i\leqslant G_\lambda$ we consider the short exact
sequence
\begin{multline*}
0 \longrightarrow \Oc_{X'}\left(\left\lceil K_{\pi}- \lambda F\right\rceil + G_{\lambda} - E_i\right) \longrightarrow \Oc_{X'}\left(\left\lceil K_{\pi}- \lambda F\right\rceil + G_{\lambda}\right) \longrightarrow \\
\longrightarrow \Oc_{E_i}\left(\left\lceil K_{\pi}- \lambda
F\right\rceil + G_{\lambda}\right) \longrightarrow 0
\end{multline*}
Pushing it forward to $X$ we get
\begin{multline*}
0 \longrightarrow \pi_*\Oc_{X'}\left(\left\lceil K_{\pi}- \lambda F\right\rceil + G_{\lambda} - E_i\right) \longrightarrow \pi_*\Oc_{X'}\left(\left\lceil K_{\pi}- \lambda F\right\rceil + G_{\lambda}\right) \longrightarrow \\
\longrightarrow H^0\left(E_i,\Oc_{E_i}\left(\left\lceil K_{\pi}-
\lambda F\right\rceil + G_{\lambda}\right)\right) \otimes \bC_O,
\end{multline*}
where $\bC_O$ denotes the skyscraper sheaf supported at $O$ with
fibre $\bC$. The minimality of $G_\lambda$  (see Theorem
\ref{jump1}) implies that
$$\pi_*\Oc_{X'}\left(\left\lceil K_{\pi}- \lambda F\right\rceil + G_{\lambda} - E_i\right) \neq \pi_*\Oc_{X'}\left(\left\lceil K_{\pi}-
\lambda F\right\rceil + G_{\lambda}\right).$$ Thus
$H^0\left(E_i,\Oc_{E_i}\left(\left\lceil K_{\pi}- \lambda
F\right\rceil + G_{\lambda}\right)\right) \neq 0$, or equivalently
$\left(\left\lceil K_{\pi}- \lambda F\right\rceil +
G_{\lambda}\right)\cdot E_i\geqslant 0$.
\end{proof}

With the above ingredients we can provide the following geometric
property of minimal jumping divisors when viewed in the dual graph.

\begin{theorem}\label{geo_dual_graph}
Let $G_\lambda$ be the minimal jumping divisor associated to a
jumping number $\lambda$ of an ideal $\fa \subseteq \cO_{X,O}$. Then
the ends of a connected component of $G_\lambda$ must be either
rupture or dicritical divisors.
\end{theorem}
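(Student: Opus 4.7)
The plan is to argue by contradiction, combining the numerical formula of Lemma \ref{num} with the nonnegativity statement of Proposition \ref{Num_condition}. Suppose $E_i$ is an end of a connected component of $G_\lambda$ but $E_i$ is neither rupture nor dicritical, i.e. $\rho_i = 0$ and $a(E_i) \leqslant 2$. Since $E_i$ is an end we have $a_{G_\lambda}(E_i) \in \{0,1\}$.

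The key observation is that neighbors of $E_i$ lying in $G_\lambda$ contribute nothing to the fractional-part sum. Indeed, $G_\lambda \leqslant H_\lambda$, so for any $E_j \in \adj(E_i)$ with $E_j \leqslant G_\lambda$ we have $\lambda e_j - k_j \in \mathbb{Z}$, hence $\{\lambda e_j-k_j\}=0$. Therefore, only the neighbors of $E_i$ that do \emph{not} belong to $G_\lambda$ may contribute, and each such contribution lies in $[0,1)$. The number of such neighbors is $a(E_i) - a_{G_\lambda}(E_i)$, so
$$\sum_{E_j \in \adj(E_i)} \{\lambda e_j - k_j\} \;<\; a(E_i) - a_{G_\lambda}(E_i).$$

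Plugging this together with $\rho_i = 0$ into the formula of Lemma \ref{num}, I obtain
$$\bigl(\lceil K_\pi - \lambda F\rceil + G_\lambda\bigr)\cdot E_i \;=\; -2 + \lambda\rho_i + \sum_{E_j\in\adj(E_i)}\{\lambda e_j-k_j\} + a_{G_\lambda}(E_i) \;<\; -2 + a(E_i) \;\leqslant\; 0,$$
contradicting Proposition \ref{Num_condition}, which guarantees that this intersection number is $\geqslant 0$ for every component $E_i \leqslant G_\lambda$.

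I do not expect any serious obstacle: the only subtle point is the bookkeeping for neighbors of $E_i$, distinguishing those inside $G_\lambda$ (contributing zero to the fractional-part sum but $1$ to $a_{G_\lambda}(E_i)$) from those outside (contributing strictly less than $1$ to the sum). Once this is done carefully, both the case $a_{G_\lambda}(E_i)=1$ (the usual end of a chain in $G_\lambda$) and the case $a_{G_\lambda}(E_i)=0$ (the component $E_i$ is isolated in $G_\lambda$) are handled uniformly by the inequality above.
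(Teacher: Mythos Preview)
Your approach is exactly the paper's: contradict Proposition~\ref{Num_condition} by computing the intersection via Lemma~\ref{num}. The paper simply does the four cases $a(E_i)\in\{1,2\}$, $a_{G_\lambda}(E_i)\in\{0,1\}$ explicitly, while you package them into one inequality.

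There is, however, a small slip in your uniform bound. The strict inequality
\[
\sum_{E_j\in\adj(E_i)}\{\lambda e_j-k_j\} \;<\; a(E_i)-a_{G_\lambda}(E_i)
\]
is only valid when the right-hand side is positive: if every neighbor of $E_i$ already lies in $G_\lambda$ (so $a(E_i)=a_{G_\lambda}(E_i)$), both sides are $0$ and the strict inequality fails. This case does occur, for instance when $E_i$ is an end of the full dual graph ($a(E_i)=1$) whose unique neighbor belongs to $G_\lambda$. Your displayed chain would then read $-1<-1$, which is false. The conclusion nevertheless survives: since $E_i$ is an end of $G_\lambda$ we have $a_{G_\lambda}(E_i)\leqslant 1$, so in this degenerate case the formula of Lemma~\ref{num} gives directly
\[
(\lceil K_\pi-\lambda F\rceil+G_\lambda)\cdot E_i \;=\; -2+0+a_{G_\lambda}(E_i)\;\leqslant\;-1\;<\;0.
\]
So either split off the case $a(E_i)=a_{G_\lambda}(E_i)$ in one line, or replace your strict bound by the non-strict $\sum\leqslant a(E_i)-a_{G_\lambda}(E_i)$ and then observe that equality forces $a(E_i)=a_{G_\lambda}(E_i)\leqslant 1$, whence $-2+a(E_i)\leqslant -1$.
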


\begin{proof}
Assume that an end $E_i$ of a connected component of $G_\lambda$ is
neither a rupture nor a dicritical divisor. It means that $E_i$ has
no excess, i.e. $\rho_i=0$, and that it has one or two adjacent
divisors, say $E_j$ and $E_l$, in the dual graph but at most one of
them belongs to $G_\lambda$.

\vskip 2mm

For the case that $E_i$ has two adjacent divisors $E_j$ and $E_l$
the formula given in lemma \ref{num} reduces to $( \lceil K_\pi-
\lambda F\rceil + G_\lambda)\cdot E_i= -2 + \left\{ \lambda e_j
-k_j\right\} + \left\{ \lambda e_l -k_l\right\}+ \lambda \rho_i +
a_{G_\lambda}(E_i).$ Then:

\vskip 2mm

\begin{itemize}
\item[$\cdot$] If $E_i$ has valence one in $G_\lambda$, e.g. $E_l \not \leqslant
G_\lambda$ then $$(\lceil K_\pi- \lambda F\rceil + G_\lambda)\cdot
E_i= -2 + \left\{ \lambda e_l -k_l \right\}+ 1 < 0. $$

\vskip 2mm

\item[$\cdot$] If $E_i$ is an isolated component of $G_\lambda$, i.e., $E_j, E_l \not \leqslant
G_\lambda$ then $$( \lceil K_\pi- \lambda F\rceil + G_\lambda)\cdot
E_i= -2 + \left\{ \lambda e_j -k_j\right\} + \left\{ \lambda e_l
-k_l\right\} <0. $$
\end{itemize}

\vskip 2mm

If $E_i$ has just one adjacent divisor $E_j$, i.e. $E_i$ is an end
of the dual graph, the formula reduces to  $( \lceil  K_\pi-\lambda
F\rceil + G_\lambda)\cdot E_i= -2 + \left\{ \lambda e_j -k_j\right\}
+ \lambda \rho_i + a_{G_\lambda}(E_i)$. Then:

\begin{itemize}
\item[$\cdot$] If $E_i$ has valence one in $G_\lambda$ then $( \lceil K_\pi- \lambda F\rceil + G_\lambda)\cdot E_i= -2 + 1 < 0 $

\vskip 2mm

\item[$\cdot$] If $E_i$ is an isolated component of $G_\lambda$ then $$( \lceil K_\pi- \lambda F\rceil + G_\lambda)\cdot E_i= -2 + \left\{
\lambda e_j -k_j\right\}  <0 .$$
\end{itemize}

\vskip 2mm

In any case we get a contradiction with Proposition
\ref{Num_condition}.
\end{proof}

\begin{remark}
It follows from \cite[Theorem 3.3]{Vey} that the minimal jumping
divisor associated to the log-canonical threshold is connected in
the case that $X$ is smooth.
\end{remark}

As a consequence  we may also give  the following refinement of
Proposition \ref{Num_condition}.

\begin{proposition} \label{Num_condition2}
Let $\lambda$ be a jumping number of an $\fM$-primary ideal $\fa
\subseteq \cO_{X,O}$. If $E_i\leqslant G_\lambda$ is neither a
rupture nor a dicritical component of the minimal jumping divisor
$G_{\lambda}$ we have $$\left(\left\lceil K_{\pi}- \lambda
F\right\rceil + G_{\lambda}\right)\cdot E_i= 0.$$
\end{proposition}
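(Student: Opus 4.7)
The plan is to substitute the hypotheses on $E_i$ directly into the formula of Lemma \ref{num} and show that the various terms collapse to give exactly zero. Recall that Lemma \ref{num} provides
$$\left(\left\lceil K_\pi-\lambda F\right\rceil + G_{\lambda}\right)\cdot E_i = -2 + \lambda \rho_i + \sum_{E_j \in \adj(E_i)} \left\{\lambda e_j-k_j\right\} + a_{G_{\lambda}}(E_i),$$
so it suffices to evaluate each summand under our assumptions on $E_i$.

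First I would observe that since $E_i$ is not dicritical, its excess vanishes, giving $\lambda\rho_i = 0$. Next, since $E_i$ is not a rupture component we have $a(E_i) \leqslant 2$, and a fortiori $a_{G_\lambda}(E_i) \leqslant a(E_i) \leqslant 2$. The key step is then to invoke Theorem \ref{geo_dual_graph}: if $E_i$ were an end of the connected component of $G_\lambda$ containing it (that is, had valence $0$ or $1$ in the subgraph cut out by $G_\lambda$), it would have to be either rupture or dicritical, contradicting our hypotheses. Therefore $a_{G_\lambda}(E_i) \geqslant 2$, and combining with the previous bound yields $a_{G_\lambda}(E_i) = a(E_i) = 2$. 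In particular, every component of the dual graph adjacent to $E_i$ must lie in $G_\lambda$.

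Finally, for each such $E_j \in \adj(E_i)$ with $E_j \leqslant G_\lambda$, the fact that $\lambda$ is a candidate jumping number for $G_\lambda$ forces $\lambda e_j - k_j \in \bZ_{>0}$, and hence $\{\lambda e_j - k_j\} = 0$. Plugging everything back into the formula one obtains $-2 + 0 + 0 + 2 = 0$, exactly the claim. I expect the only real subtlety to be the correct application of Theorem \ref{geo_dual_graph} (which in turn relies on $G_\lambda$ being exceptional, guaranteed here by the $\fM$-primary hypothesis on $\fa$); once the lower bound $a_{G_\lambda}(E_i) \geqslant 2$ is in hand, the proof is a direct numerical substitution.
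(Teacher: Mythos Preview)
Your proof is correct and follows essentially the same route as the paper: both use Lemma \ref{num}, observe $\rho_i=0$ from non-dicriticality, invoke Theorem \ref{geo_dual_graph} to rule out $E_i$ being an end of $G_\lambda$ (hence $a_{G_\lambda}(E_i)=2$), and then note that the two adjacent components lie in $G_\lambda$ so their fractional parts vanish. Your write-up is in fact slightly more explicit than the paper's in deriving $a_{G_\lambda}(E_i)=a(E_i)=2$ from the two inequalities.
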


\begin{proof}
Assume that $E_i\leqslant G_{\lambda}$ is neither a rupture or a
dicritical component. In particular, it is not the end of a
connected component of $G_{\lambda}$. Thus, $E_i$ has exactly two
adjacent components $E_j$ and $E_l$ in $G_{\lambda}$, and its excess
is $\rho_i=0$. The formula given in Lemma \ref{num} reduces to
$$\left(\left\lceil K_{\pi}- \lambda F\right\rceil + G_{\lambda}\right)\cdot E_i  = -2 + \lambda \rho_i + \left\{\lambda e_j-k_j\right\} + \left\{\lambda e_l-k_l\right\} + a_{G_{\lambda}}\left(E_i\right).$$
Notice that $a_{G_\lambda}(E_i)=2$, and also $\left\{\lambda
e_j-k_j\right\} = \left\{\lambda e_l-k_l\right\} = 0$ because $E_j$
and $E_l$ are components of $G_{\lambda}$, so finally
$\left(\left\lceil K_{\pi}- \lambda F\right\rceil +
G_{\lambda}\right)\cdot E_i = 0$.
\end{proof}

\section{Minimal contributing divisors} \label{mini}

The theory of minimal jumping divisors introduced in Section
\S\ref{jumping_divisor} can be included in a more general framework
that we will describe in this section. To such purpose we will give
our own perspective of the work of Hyry-J\"arviletho \cite{HJ11} and
its relation with the theory of contributing divisors of Tucker
\cite{Tuc10}.

\vskip 2mm

Let  $\lambda$ be a jumping number of an  ideal $\fa \subseteq
\cO_{X,O}$. Recall that  a reduced divisor  $G\leqslant F$ that
contributes to $\lambda$ defines an ideal nested between two
consecutive multiplier ideals
$$\J({\A}^{\lambda -\varepsilon})\supseteq \pi_{*}\Oc_{X'}(\lceil K_\pi- \lambda F\rceil +G)\varsupsetneq \J({\A}^{\lambda}).$$
We may interpret that $\lambda$ is {\it parametrized} by the set of
nested ideals defined by contributions but this is far from being a
one-to-one correspondence. An easy way to detect such a nested ideal
is finding a suitable critical divisor using Tucker's algorithm. The
approach given in the previous sections is more economical in the
sense that each jumping number is parametrized by its unique minimal
jumping divisor $G_\lambda$ or equivalently, its preceding
multiplier ideal. \vskip 2mm

Hyry-J\"arviletho \cite{HJ11} give a similar approach where jumping
numbers are parametrized by general antinef
divisors\footnote{Hyry-J\"arviletho only consider the case of
$\fM$-primary ideals on smooth surfaces and consequently antinef
divisors with exceptional support but their ideas also hold in
general}, or equivalently complete ideals not necessarily nested in
the chain of multiplier ideals. We should point out that their
results also hold for the case that $X$ has rational singularities
since their arguments are based on divisorial considerations. Given
any antinef divisor
 $D=\sum d_i E_i \in {\rm Div}(X')$, they considered the following notions:

 \vskip 2mm

\begin{itemize}
 \item[$\cdot$] {\it Jumping number corresponding to $D$:} $$\lambda_D:= \min_i\left\{\frac{k_i+1+ d_i}{e_i}\right\}. $$

 \item[$\cdot$] {\it Support of a jumping number corresponding to $D$:}
 $$S_D:=\left\{ i \hskip 2mm | \hskip 2mm \lambda_D=\frac{k_i+1+ d_i}{e_i} \right\}.$$

 \item[$\cdot$] {\it Contributing divisor associated to $D$:} $$G_D:= \sum_{i\in S_D} E_i.$$

\end{itemize}

\vskip 2mm

Hyry-J\"arviletho proved in \cite[Proposition 1]{HJ11} that all
jumping numbers of $\fa$ can be obtained in this way: as $\lambda_D$
for a suitable antinef divisor $D\in {\rm Div}(X')$ (or equivalently
a complete ideal $I_D$). Moreover, they give in \cite[Theorem
1]{HJ11} a combinatorial criterion that detects the existence of
such antinef divisors. The simplest parametrizations they used to
describe the set of jumping numbers are given by antinef divisors
corresponding to critical divisors (see \cite[Theorem 2]{HJ11}).

\vskip 2mm

In general, the complete ideal $I_D$ associated to an antinef
divisor $D\in {\rm Div}(X')$ satisfies $\J({\A}^{\lambda_D
-\varepsilon})\supseteq I_D$  but does not necessarily contain
$\J({\A}^{\lambda_D})$. However, if $I_D$ is nested in between two
consecutive multiplier ideals
$$\J(\A^{\lambda-\varepsilon})\supseteq I_D \varsupsetneq
\J(\A^{\lambda})$$ then it must satisfy $\lambda=\lambda_D$.

\begin{remark}
One can also interpret this framework through the  generalized
version of log-canonical thresholds already introduced by
J\"arviletho in \cite{Jar11}. Namely, the  {\it log-canonical
threshold} with respect to any other ideal $\fb  \subseteq
\cO_{X,O}$ is defined as follows:
$${\rm lct}_{\fb}(\fa):= \inf \{ c\in \bQ_{>0} \hskip 2mm | \hskip 2mm \J(\A^{c}) \not\supset \fb \}$$
Notice that whenever $I_D$ is the complete ideal associated  to an
antinef divisor $D\in {\rm Div}(X')$, then $\lambda_D= {\rm
lct}_{I_D}(\fa)$.
\end{remark}

Hyry-J\"arviletho \cite[Lemma 11]{HJ11} proved that if $D\in {\rm
Div}(X')$ is an antinef divisor then $G_D$ is a contributing divisor
for $\lambda_D$. In fact, the contributing divisors obtained in this
way satisfy some nice properties as we will see next.

\begin{proposition}\label{G_DleqG}
Let $G$ be a  contributing divisor associated to a jumping number
$\lambda$. Let $D$ be the antinef closure of $\lfloor\lambda F -
K_\pi\rfloor-G $. Then $G_D \leqslant G$.
 \end{proposition}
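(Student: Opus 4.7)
The plan is to show two things in sequence: first that the rational number $\lambda_D$ defined by the minimum equals $\lambda$, and second that every index achieving this minimum gives a component of $G$. Set $D' := \lfloor \lambda F - K_\pi\rfloor - G$, so that $D$ is the antinef closure of $D'$, and let $D_\lambda = \sum e_i^\lambda E_i$ denote the antinef closure of $\lfloor \lambda F - K_\pi\rfloor$ (so $\J(\fa^\lambda) = \pi_*\Oc_{X'}(-D_\lambda)$).

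First I would set up the key strict inequality $D < D_\lambda$. Because $D' \leqslant \lfloor \lambda F - K_\pi\rfloor$, Proposition \ref{prop_clausura}(ii) gives $D \leqslant D_\lambda$. By Proposition \ref{equiv_antinef} together with the contribution hypothesis,
\[
\pi_*\Oc_{X'}(-D) \;=\; \pi_*\Oc_{X'}(\lceil K_\pi - \lambda F\rceil + G) \;\varsupsetneq\; \J(\fa^\lambda) \;=\; \pi_*\Oc_{X'}(-D_\lambda),
\]
and since both $D$ and $D_\lambda$ are antinef, Proposition \ref{semicont_antinef}(ii) upgrades this to $D < D_\lambda$.

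Next I would analyze the coefficients of $D$ component by component. The starting divisor satisfies $v_i(D') = \lambda e_i - k_i - 1$ when $E_i \leqslant G$ (since $\lambda e_i - k_i \in \bZ_{>0}$ by the candidate hypothesis), and $v_i(D') = \lfloor \lambda e_i - k_i\rfloor$ otherwise. Because $d_i \geqslant v_i(D')$ by Lemma \ref{def_antinef}, a brief case split immediately yields $(k_i + 1 + d_i)/e_i \geqslant \lambda$ for every $i$; moreover the inequality becomes strict whenever $E_i \nleqslant G$, since either $\lambda e_i - k_i \notin \bZ$ so $\lfloor \lambda e_i - k_i\rfloor > \lambda e_i - k_i - 1$, or $E_i \leqslant H_\lambda$ with $E_i \nleqslant G$ so already $d_i \geqslant \lambda e_i - k_i$. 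In particular $\lambda_D \geqslant \lambda$, and any $i \in S_D$ realizing the minimum is automatically forced to satisfy $E_i \leqslant G$.

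To finish, I would rule out $\lambda_D > \lambda$ by contradiction. If $(k_i + 1 + d_i)/e_i > \lambda$ held for every $i$, the integrality of $d_i$ promotes each inequality to $d_i \geqslant \lfloor \lambda e_i - k_i\rfloor$, so that $D \geqslant \lfloor \lambda F - K_\pi\rfloor$; the minimality in Lemma \ref{def_antinef} then forces $D \geqslant D_\lambda$, contradicting the strict inequality $D < D_\lambda$. The main technical point I expect to handle carefully is this last integrality step, which requires treating separately the candidate components (where $\lambda e_i - k_i$ is itself integral) and the non-candidate ones, in order to move from a strict inequality on rationals to the desired floor bound on the integers $d_i$.
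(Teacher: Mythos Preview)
Your proof is correct and follows essentially the same route as the paper's. The paper's argument is slightly more compressed: it invokes directly the earlier-stated fact that a nested ideal $\J(\fa^{\lambda-\varepsilon})\supseteq I_D \varsupsetneq \J(\fa^{\lambda})$ forces $\lambda_D=\lambda$, and then derives the contradiction from $\lfloor\lambda e_i-k_i\rfloor\leqslant d_i$ together with $\lambda e_i-k_i-1=d_i$ for a hypothetical $E_i\leqslant G_D$ with $E_i\nleqslant G$. Your version unpacks the equality $\lambda_D=\lambda$ explicitly via the strict inequality $D<D_\lambda$ and an integrality argument, which makes the proof more self-contained but is otherwise the same idea.
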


 \begin{proof}
Let $D=\sum d_i E_i$   be the antinef closure of $\lfloor\lambda F -
K_\pi\rfloor-G $. Since $I_D$ is a nested ideal in the chain of
multiplier ideals, then we have
$$\lambda=\lambda_D=\min_i \left\{\ \frac{k_i+1+d_i}{e_i} \right\}.$$ Hence $\lambda e_i -k_i\leqslant 1+d_i$ and
equality holds if and only if $i\in S_D$. In order to prove $G_D
\leqslant G$ we will show that $E_i\not\leqslant G$ implies
$E_i\not\leqslant G_D$. Indeed, if $E_i\not\leqslant G$ and
$E_i\leqslant G_D$ then $\lfloor \lambda e_i -k_i\rfloor \leqslant
d_i$ (just because  $\lfloor \lambda F - K_\pi \rfloor - G \leqslant
D$ by Lemma \ref{def_antinef}) and $\lambda e_i -k_i- 1=d_i$ so we
get a contradiction.
 \end{proof}

 \begin{proposition}\label{Prop3}
 Let $\lambda=\lambda_{D'}$ be a jumping number associated to an antinef divisor $D'\in {\rm Div}(X')$. Let $D$ be the antinef
 closure of $\lfloor\lambda F - K_\pi\rfloor-G_{D'} $. Then we have $D \leqslant D'$, $\lambda_D=\lambda_{D'}$, $S_D=S_{D'}$ and $G_D=G_{D'}$.
 \end{proposition}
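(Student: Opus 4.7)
The plan is to establish the four claims in order: first $D \leqslant D'$, then $\lambda_D = \lambda_{D'}$, then $S_D = S_{D'}$, which immediately yields $G_D = G_{D'}$. The whole argument revolves around carefully comparing the coefficients of $\lfloor \lambda F - K_\pi \rfloor - G_{D'}$ with those of $D'$, separated according to whether the index $i$ lies in $S_{D'}$ or not.

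First I would show that $\lfloor \lambda F - K_\pi \rfloor - G_{D'} \leqslant D'$. For $i \in S_{D'}$ one has $\lambda e_i - k_i = 1 + d'_i \in \bZ$, so $v_i(\lfloor \lambda F - K_\pi \rfloor - G_{D'}) = \lfloor \lambda e_i - k_i \rfloor - 1 = d'_i$. For $i \notin S_{D'}$ one has $\lambda e_i - k_i < 1 + d'_i$, giving $v_i(\lfloor \lambda F - K_\pi \rfloor - G_{D'}) = \lfloor \lambda e_i - k_i \rfloor \leqslant d'_i$. Since $D'$ is antinef, Lemma \ref{def_antinef} then yields $D \leqslant D'$. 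In particular, $d_i \leqslant d'_i$ for all $i$, and since $v_i(D) \geqslant d'_i$ for $i \in S_{D'}$ (because the antinef closure only increases coefficients), we conclude $d_i = d'_i$ for every $i \in S_{D'}$.

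Next, to show $\lambda_D = \lambda_{D'}$, the inequality $\lambda_D \leqslant \lambda_{D'}$ follows from $d_i \leqslant d'_i$ and taking the minimum in the definition. For the reverse, observe that for each $i$, $d_i \geqslant v_i(\lfloor \lambda F - K_\pi \rfloor - G_{D'}) \geqslant \lambda e_i - k_i - 1$, so $(k_i + 1 + d_i)/e_i \geqslant \lambda_{D'}$, giving $\lambda_D \geqslant \lambda_{D'}$.

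For $S_D = S_{D'}$, the inclusion $S_{D'} \subseteq S_D$ is immediate from $d_i = d'_i$ on $S_{D'}$ together with $\lambda_D = \lambda_{D'}$. The potential obstacle — and the only subtle point in the whole argument — is the reverse inclusion. Suppose by contradiction that $i \in S_D \setminus S_{D'}$. Then $d_i = \lambda e_i - k_i - 1 \in \bZ$, which forces $\lambda e_i - k_i \in \bZ$ and hence $\lfloor \lambda e_i - k_i \rfloor = \lambda e_i - k_i = d_i + 1$. But since $i \notin S_{D'}$, the coefficient of $\lfloor \lambda F - K_\pi \rfloor - G_{D'}$ at $E_i$ is $\lfloor \lambda e_i - k_i \rfloor = d_i + 1$, forcing $d_i \geqslant d_i + 1$, a contradiction. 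Therefore $S_D = S_{D'}$, and the final equality $G_D = G_{D'}$ follows directly from the definition of the contributing divisor associated to an antinef divisor.
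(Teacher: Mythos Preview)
Your proof is correct and follows essentially the same opening move as the paper (verifying $\lfloor \lambda F - K_\pi\rfloor - G_{D'} \leqslant D'$ coefficient-by-coefficient to get $D\leqslant D'$), but then diverges in two places. For $\lambda_D=\lambda_{D'}$ the paper argues via the chain of ideals $\J(\fa^{\lambda-\varepsilon})\supseteq I_D\supseteq I_{D'}$ together with $I_{D'}\not\subseteq\J(\fa^{\lambda})$, whereas you bound each $(k_i+1+d_i)/e_i$ directly from $d_i\geqslant \lambda e_i-k_i-1$. For the inclusion $S_D\subseteq S_{D'}$ the paper invokes Proposition~\ref{G_DleqG} with $G=G_{D'}$ (which in turn needs that $G_{D'}$ is a contributing divisor), while you give a direct contradiction: if $i\in S_D\setminus S_{D'}$ then $\lambda e_i-k_i\in\bZ$ and the $E_i$-coefficient of $\lfloor\lambda F-K_\pi\rfloor-G_{D'}$ equals $d_i+1$, contradicting $D\geqslant \lfloor\lambda F-K_\pi\rfloor-G_{D'}$. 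Your route is more elementary and self-contained; the paper's route is shorter once Proposition~\ref{G_DleqG} is available and highlights the link with nested ideals.
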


 \begin{proof}
 Using the definition of antinef closure (see Lemma \ref{def_antinef}), in order to get $D \leqslant D'$
 we only need to prove that $\lfloor \lambda F - K_\pi \rfloor - G_{D'} \leqslant D'$.  Set $D'=\sum d'_i E_i$. By hypothesis
  $$\lambda=\lambda_{D'}=\min_i \left\{\ \frac{k_i+1+d'_i}{e_i} \right\}$$ therefore we have
  $\lfloor \lambda e_i -k_i\rfloor \leqslant d'_i$ if $i\not\in S_{D'}$, whereas $\lfloor \lambda e_i -k_i\rfloor -1= d'_i$
  if $i\in S_{D'}$ as desired.

  \vskip 2mm

Notice then  that we have $ \J(\A^{\lambda-\varepsilon})\supseteq
I_D\supseteq I_{D'}$ so, given the fact that
 $I_{D'}\not \subseteq \J(\A^{\lambda})$, we get $\lambda_D=\lambda$. Now, the inclusion of divisors $D \leqslant D'$
 having the same minimum $\lambda_D=\lambda_{D'}$, gives the inclusion of supports $S_D\supseteq S_{D'}$ and equivalently $G_D\geqslant G_{D'}$.
 On the other hand, taking $G=G_{D'}$ in Proposition \ref{G_DleqG}, we get the reverse inequality of divisors
 $G_D\leqslant G_{D'}$ so we are done.
 \end{proof}

The main result of this section is that we can find a minimal
contributing divisor among all contributing divisors defining the
same nested ideal.

 \begin{theorem}\label{minimal_contributing}
Let $G$ be a  contributing divisor associated to a jumping number
$\lambda$. Let $D$ be the antinef closure of $\lfloor\lambda F -
K_\pi\rfloor-G $, which gives a nested ideal
$$\J(\A^{\lambda-\varepsilon})\supseteq I_D = \pi_*\Oc_{X'}\left(\left\lceil K_{\pi}-
\lambda F\right\rceil + G\right)  \varsupsetneq
\J(\A^{\lambda}).$$ Then we also have $I_D =
\pi_*\Oc_{X'}\left(\left\lceil K_{\pi}- \lambda F\right\rceil +
G_D\right)$. Furthermore, $G_D$ is the minimal contributing divisor
associated to $\lambda$ that defines the same ideal $I_D$, that is:

\begin{itemize}
 \item[$\cdot$] Any contribution  $G'$ to $\lambda$ defining $I_D = \pi_*\Oc_{X'}\left(\left\lceil K_{\pi}-
\lambda F\right\rceil + G'\right)$ must satisfy $G_D \leqslant G'$.

 \item[$\cdot$] Any proper subdivisor $G'< G_D$ defines an strictly included ideal $$I_D
 \varsupsetneq \pi_*\Oc_{X'}\left(\left\lceil K_{\pi}- \lambda F\right\rceil + G'\right).$$
\end{itemize}

 \end{theorem}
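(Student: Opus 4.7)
The plan is to translate the statement into the language of antinef divisors via the one-to-one correspondence with complete ideals, and then invoke Propositions \ref{G_DleqG} and \ref{Prop3}. I observe first that $\lceil K_\pi - \lambda F\rceil = -\lfloor \lambda F - K_\pi\rfloor$, so for any reduced divisor $G'\leqslant F$ one has $\pi_*\Oc_{X'}(\lceil K_\pi-\lambda F\rceil + G') = I_{\widetilde{D'}}$ by Proposition \ref{equiv_antinef}, where $\widetilde{D'}$ is the antinef closure of $\lfloor \lambda F - K_\pi\rfloor - G'$. This reduces every ideal comparison in the statement to a comparison of antinef divisors via Proposition \ref{semicont_antinef}.

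For the identity $I_D = \pi_*\Oc_{X'}(\lceil K_\pi - \lambda F\rceil + G_D)$, I denote by $D''$ the antinef closure of $\lfloor\lambda F - K_\pi\rfloor - G_D$; the right-hand side then equals $I_{D''}$. Applying Proposition \ref{Prop3} with the role of the antinef divisor played by our $D$ (noting that $\lambda_D = \lambda$ follows from the nested-ideal condition) yields $D''\leqslant D$. In the opposite direction, Proposition \ref{G_DleqG} gives $G_D\leqslant G$, so $\lfloor\lambda F - K_\pi\rfloor - G \leqslant \lfloor\lambda F - K_\pi\rfloor - G_D$, and taking antinef closures via Proposition \ref{prop_clausura} yields $D\leqslant D''$. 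Hence $D = D''$, establishing the first assertion.

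For minimality part (a), if $G'$ is a contribution with $\pi_*\Oc_{X'}(\lceil K_\pi - \lambda F\rceil + G') = I_D$, then letting $D'$ denote the antinef closure of $\lfloor \lambda F - K_\pi\rfloor - G'$ gives $I_{D'} = I_D$, whence $D' = D$ by Lipman's bijection; Proposition \ref{G_DleqG} applied to $G'$ then yields $G_D = G_{D'}\leqslant G'$. For part (b), assume $G' < G_D$ and let $D'$ be the antinef closure of $\lfloor\lambda F - K_\pi\rfloor - G'$. Since $G'\leqslant G_D$ and since $D$ is the antinef closure of $\lfloor\lambda F - K_\pi\rfloor - G_D$ by the first step, Proposition \ref{prop_clausura} gives $D \leqslant D'$. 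To upgrade this to strict inequality, I pick any $E_i\leqslant G_D$ with $E_i\not\leqslant G'$; writing $D = \sum d_j E_j$, the membership $i\in S_D$ combined with $\lambda_D = \lambda$ gives $\lambda e_i - k_i = 1 + d_i\in\bZ$, so $v_i(\lfloor\lambda F - K_\pi\rfloor - G') = 1 + d_i > d_i = v_i(D)$, and since $D'$ dominates $\lfloor\lambda F - K_\pi\rfloor - G'$ we obtain $v_i(D') > v_i(D)$. This forces $D < D'$, whence $I_D\varsupsetneq I_{D'}$ by Proposition \ref{semicont_antinef}(ii).

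The conceptually delicate point is the first step, where the two opposite inequalities between $D$ and $D''$ have to be sourced from different results (Propositions \ref{Prop3} and \ref{G_DleqG} respectively); once that identity is in place, the minimality statements reduce to routine bookkeeping within the antinef-closure dictionary.
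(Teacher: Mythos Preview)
Your proof is correct and follows essentially the same approach as the paper. The only organizational difference is that the paper packages the two minimality bullet points into a single ``if and only if'' claim (any contributor $G'$ with $I_D \supseteq \pi_*\Oc_{X'}(\lceil K_\pi - \lambda F\rceil + G')$ achieves equality iff $G_D \leqslant G'$), whereas you treat parts (a) and (b) separately; the underlying arguments---Proposition~\ref{G_DleqG} for one inequality, Proposition~\ref{Prop3} for the other, and the coordinate check $v_i(D) = \lambda e_i - k_i - 1 < \lambda e_i - k_i$ at a component $E_i \leqslant G_D \setminus G'$ for strictness---are identical.
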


 \begin{proof}
 Let $D'$ be
the antinef closure of $\lfloor\lambda F - K_\pi\rfloor-G_D $. We
will see first that $D=D'$ thus giving the desired equality of
ideals $$I_D = \pi_*\Oc_{X'}\left(\left\lceil K_{\pi}- \lambda
F\right\rceil + G\right)=\pi_*\Oc_{X'}\left(\left\lceil K_{\pi}-
\lambda F\right\rceil + G_D\right)=I_{D'}.$$ In virtue of
Proposition \ref{G_DleqG}, we have $G_D \leqslant G$ so
$\lfloor\lambda F - K_\pi\rfloor-G \leqslant \lfloor\lambda F -
K_\pi\rfloor-G_D $ and $D\leqslant D'$. The reverse inequality
$D\geqslant D'$ is a consequence of Proposition \ref{Prop3}.

\vskip 2mm

To show that $G_D$ is the minimal contributor to the jumping number
$\lambda$ that defines the same ideal $I_D$ we will prove the
following equivalent result:

\vskip 2mm

{\it Claim: } Any contributor  $G'$ to $\lambda$ for which
  $I_D\supseteq \pi_*\Oc_{X'}\left(\left\lceil K_{\pi}-
\lambda F\right\rceil + G'\right)$  also satisfies the reverse
inclusion $I_D\subseteq \pi_*\Oc_{X'}\left(\left\lceil K_{\pi}-
\lambda F\right\rceil + G'\right)$ if and only if $G_D\leqslant G'$.

\vskip 2mm

{\it Proof of Claim:} Suppose first that $G_D\leqslant G'$. Then
$\lfloor\lambda F - K_\pi\rfloor-G' \leqslant \lfloor\lambda F -
K_\pi\rfloor-G_D $ and hence  $D''\leqslant D'=D$, where $D''$ is
the antinef closure of $\lfloor\lambda F - K_\pi\rfloor-G'$.
Therefore $I_D \subseteq I_{D''}$ as wanted.

\vskip 2mm

Assume now that  $G_D\not\leqslant G'$ and pick a component
$E_i\leqslant G_D$ such that $E_i\not\leqslant G'$. By hypothesis
$I_D \supseteq I_{D''}$ and equivalently $D\leqslant D''$ but in
fact $D< D''$ since
 $$v_i(D)=\lambda e_i - k_i -1 < \lambda e_i - k_i=v_i(\lfloor\lambda F - K_\pi\rfloor-G' )\leqslant v_i(D'').$$
The result follows then from Proposition \ref{semicont}.
 \end{proof}

 It turns out that critical divisors are also minimal in the above sense as we can see in the
 following generalization of \cite[Proposition 3]{HJ11}.

 \begin{corollary} \label{critical_min}
Let $G$ be a  contributing divisor associated to a jumping number
$\lambda$. Let $D$ be the antinef closure of $\lfloor\lambda F -
K_\pi\rfloor-G $. Then $G$ is a critical divisor if and only if $G_D
= G$ and $I_D$ and $ \J(\A^{\lambda})$ do not admit strictly nested
ideals between them defined by contributors to $\lambda$.
 \end{corollary}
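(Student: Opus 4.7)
My plan is to prove the two implications separately, with Theorem~\ref{minimal_contributing} as the main tool. For the reverse direction, I assume $G=G_D$ and that no contributor to $\lambda$ defines an ideal strictly between $I_D$ and $\J(\A^\lambda)$, and verify that $G$ is critical. Picking any proper subdivisor $0\leqslant G'<G$ and setting $I':=\pi_{*}\Oc_{X'}(\lceil K_\pi-\lambda F\rceil+G')$, Proposition~\ref{prop_clausura}(i) gives $\J(\A^\lambda)\subseteq I'\subseteq I_D$. If $I'=I_D$, then $G'$ would be a contributor defining $I_D$ but strictly below $G=G_D$, violating the minimality clause of Theorem~\ref{minimal_contributing}; hence $I'\subsetneq I_D$, and the no-intermediate hypothesis forces $I'=\J(\A^\lambda)$, so $G$ is critical.

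For the forward direction I first show $G=G_D$: Theorem~\ref{minimal_contributing} gives $G_D\leqslant G$, and if this were strict, criticality would force $\pi_{*}\Oc_{X'}(\lceil K_\pi-\lambda F\rceil+G_D)=\J(\A^\lambda)$, contradicting the equality of this ideal with $I_D\supsetneq\J(\A^\lambda)$ that Theorem~\ref{minimal_contributing} provides. For the no-intermediate assertion I argue by contradiction: assume a contributor $G''$ yields $I_D\supsetneq I_{D''}\supsetneq\J(\A^\lambda)$, and by Theorem~\ref{minimal_contributing} replace $G''$ with the minimal $G_{D''}\leqslant G''$ defining $I_{D''}$. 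Then $G$ and $G''$ must be incomparable: $G''<G$ would give $I_{D''}=\J(\A^\lambda)$ by criticality, while $G\leqslant G''$ would force $I_D\subseteq I_{D''}$. The componentwise meet $G\wedge G''$ is then a proper subdivisor of both $G$ and $G''$, so criticality of $G$ gives $\pi_{*}\Oc_{X'}(\lceil K_\pi-\lambda F\rceil+G\wedge G'')=\J(\A^\lambda)$, while the minimality of $G''=G_{D''}$ gives the same ideal strictly inside $I_{D''}$.

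The principal obstacle is turning these two consistent observations into an actual contradiction, since criticality of $G$ only constrains subdivisors of $G$ and does not directly control contributors whose support reaches outside $G$. My plan to close the gap is to translate the setup into antinef divisors via Proposition~\ref{semicont_antinef} and compare the antinef closures of $\lfloor\lambda F-K_\pi\rfloor-G$, $\lfloor\lambda F-K_\pi\rfloor-G''$, and $\lfloor\lambda F-K_\pi\rfloor-(G\vee G'')$ using the unloading procedure of Section~\ref{preli}. The identification $G=G_D$ forces $v_i(D)=\lambda e_i-k_i-1$ at each $E_i\leqslant G$, and the analogous identification for $G''$ pins down the values at components of $G''\setminus G$ tightly enough that, together with the strict chain of antinef closures corresponding to $\J(\A^\lambda)\supsetneq I_D\supsetneq I_{D''}$, an unloading step at some component of $G''\setminus G$ should produce a numerical inconsistency. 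This antinef-closure comparison is where the real work of the proof lies.
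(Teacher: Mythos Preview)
Your reverse implication and the argument that $G=G_D$ are correct and match the paper. The gap is in the ``no intermediate ideal'' part of the forward direction, and your proposed fix via unloading is both unnecessary and not clearly convergent.

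The point you are missing is that the incomparable case never occurs. After replacing $G''$ by $G_{D''}$, use that $I_D\varsupsetneq I_{D''}$ gives $D<D''$ for the corresponding antinef divisors (Proposition~\ref{semicont_antinef}), and that both have $\lambda_D=\lambda_{D''}=\lambda$. For any $i\in S_{D''}$ one has $\lambda=\frac{k_i+1+d''_i}{e_i}\geqslant\frac{k_i+1+d_i}{e_i}\geqslant\lambda$, forcing $d_i=d''_i$ and $i\in S_D$. Hence $S_{D''}\subseteq S_D$, i.e.\ $G''=G_{D''}\leqslant G_D=G$. Now your own case analysis finishes: $G''<G$ contradicts criticality of $G$ (since $G_{D''}$ defines $I_{D''}\varsupsetneq\J(\A^\lambda)$), and $G''=G$ gives $I_{D''}=I_D$ by Theorem~\ref{minimal_contributing}. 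This is exactly the paper's route.

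In fact your own observations already contain this: you note that $G=G_D$ pins $v_i(D)=\lambda e_i-k_i-1$ on components of $G$, and analogously $v_i(D'')=\lambda e_i-k_i-1$ on components of $G''$. If some $E_i\leqslant G''$ were not in $G$, then $i\notin S_D$ gives $d_i>\lambda e_i-k_i-1=d''_i$, contradicting $D\leqslant D''$. So no unloading is needed; the comparison of supports via $D<D''$ is the whole argument.
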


\begin{proof}
Assume first that  $G_D = G$. Then, by Theorem
\ref{minimal_contributing}, any proper subdivisor $0\leqslant G'<G$
defines an ideal strictly included in $I_D \varsupsetneq
\pi_*\Oc_{X'}\left(\left\lceil K_{\pi}- \lambda F\right\rceil +
G'\right) \supseteq  \J(\A^{\lambda}) $. Since $I_D$ and $
\J(\A^{\lambda})$  do not admit strictly nested ideals between them
coming from contributors, we get $\pi_*\Oc_{X'}\left(\left\lceil
K_{\pi}- \lambda F\right\rceil + G'\right) =  \J(\A^{\lambda})$ so
$G$ is a critical divisor.

\vskip 2mm

Assume now that $G$ is a critical divisor. By Proposition
\ref{G_DleqG} we have $G_D\leqslant G$. Both divisors define the
same ideal by Theorem \ref{minimal_contributing} so they must be
equal otherwise we would have a contradiction with the fact that $G$
is a critical divisor.

\vskip 2mm

Finally we will see that there is no contributing divisor $G'$
associated to $\lambda$ defining a strictly nested ideal $$I_D
\varsupsetneq  \pi_*\Oc_{X'}\left(\left\lceil K_{\pi}- \lambda
F\right\rceil + G'\right)  \varsupsetneq \J(\A^{\lambda}).$$

Assume that such $G'$ exists and let $D'$ be the antinef closure of
$ \lfloor \lambda F - K_\pi\rfloor - G'$. Then the inclusion of
divisors $D<D'$ having the same minimum
$\lambda_D=\lambda_{D'}=\lambda$ implies $S_{D'} \subseteq S_D$ and
$G_{D'} \leqslant G_D$. Since $G=G_D$ is minimal, applying Theorem
\ref{minimal_contributing}, we must have $G=G_D=G_{D'}\leqslant G'$
contradicting the starting hypothesis of inclusion of ideals.
\end{proof}

 The minimal jumping divisor introduced in Section \S\ref{jumping_divisor} fits nicely in this theory.
Given a jumping number $\lambda$ of an $\fM$-primary ideal $\fa
\subseteq \cO_{X,O}$, let $D_{\lambda-\varepsilon}$ be the antinef
closure of $\lfloor(\lambda-\varepsilon) F - K_\pi\rfloor$ for
$\varepsilon >0$ small enough. Then we have
$\lambda=\lambda_{D_{\lambda-\varepsilon}}$ and the unique minimal
jumping divisor is $G_\lambda=G_{D_{\lambda-\varepsilon}}$.

\vskip 2mm

In general, a divisor $G\in\Lambda$  that contributes to the jumping
number $\lambda$ might not be contained in $G_\lambda$. For minimal
contributing divisors we have the following:

 \begin{proposition} \label{min_min}
 Let $\lambda$ be a jumping number  of an  ideal $\fa \subseteq \cO_{X,O}$ and $G_\lambda$ be its associated
 minimal jumping divisor. Then $G_D \leqslant G_\lambda$ for any antinef divisor $D\in {\rm Div}(X')$ such that $\lambda=\lambda_D$.
 \end{proposition}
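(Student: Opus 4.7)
The plan is to exploit the containment $\J(\fa^{\lambda-\varepsilon})\supseteq I_D$, which holds for any antinef divisor $D$ with $\lambda_D=\lambda$ (as noted explicitly in the paper), and then use the characterization of $G_\lambda$ coming from Theorem \ref{lct_ideal} to force equality in a minimum.

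First, I would let $\lambda'<\lambda$ be the jumping number preceding $\lambda$, so that $\J(\fa^{\lambda'})=\J(\fa^{\lambda-\varepsilon})$ for $\varepsilon>0$ small. Then $D_{\lambda-\varepsilon}=D_{\lambda'}=\sum e_i^{\lambda'}E_i$ is the antinef closure of $\lfloor\lambda' F - K_\pi\rfloor$ and, by definition, $E_i\leqslant G_\lambda$ if and only if $\lambda=(k_i+1+e_i^{\lambda'})/e_i$. Moreover, by Theorem \ref{lct_ideal},
\[
\lambda=\min_i\left\{\frac{k_i+1+e_i^{\lambda'}}{e_i}\right\}.
\]

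Next, the hypothesis $\lambda_D=\lambda$ together with the remark preceding Proposition \ref{G_DleqG} yields $\J(\fa^{\lambda-\varepsilon})\supseteq I_D$. Since both $D$ and $D_{\lambda'}$ are antinef, Proposition \ref{semicont_antinef} translates this inclusion into the divisorial inequality $D\geqslant D_{\lambda'}$, i.e.\ $d_i\geqslant e_i^{\lambda'}$ for every $E_i$. In particular,
\[
\frac{k_i+1+e_i^{\lambda'}}{e_i}\leqslant \frac{k_i+1+d_i}{e_i}
\]
for every $i$, while the right-hand side is bounded below by $\lambda_D=\lambda$ by the definition of $\lambda_D$.

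Now take any $E_i\leqslant G_D$, i.e.\ $i\in S_D$. Then $(k_i+1+d_i)/e_i=\lambda$, and the chain of inequalities above forces
\[
\lambda\;\leqslant\;\frac{k_i+1+e_i^{\lambda'}}{e_i}\;\leqslant\;\frac{k_i+1+d_i}{e_i}\;=\;\lambda,
\]
so equality holds throughout. Hence $\lambda=(k_i+1+e_i^{\lambda'})/e_i$, which is precisely the condition $E_i\leqslant G_\lambda$. This gives $G_D\leqslant G_\lambda$, as desired.

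No step here looks genuinely hard; the only subtlety is recognizing that the minimality of $\lambda$ among the candidates $(k_i+1+e_i^{\lambda'})/e_i$ combined with the inequality $d_i\geqslant e_i^{\lambda'}$ (forced by $D\geqslant D_{\lambda'}$) is enough to sandwich the two expressions and produce the equality $d_i=e_i^{\lambda'}$ exactly on the support of $G_D$.
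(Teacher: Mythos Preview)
Your proof is correct and follows essentially the same line as the paper's: both arguments use the inclusion $\J(\fa^{\lambda-\varepsilon})\supseteq I_D$, translate it via Proposition~\ref{semicont_antinef} into $D_{\lambda-\varepsilon}\leqslant D$, and then observe that two antinef divisors attaining the same minimum $\lambda$ with one dominating the other must satisfy the reverse inclusion of supports. The only difference is that the paper first replaces $D$ by the antinef closure $D'$ of $\lfloor\lambda F-K_\pi\rfloor-G_D$ (invoking Proposition~\ref{Prop3} to get $G_{D'}=G_D$) before comparing with $D_{\lambda-\varepsilon}$, whereas you compare $D$ directly; your route is slightly more streamlined but the argument is the same.
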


 \begin{proof}
Let $D'$ be the antinef closure of $\lfloor\lambda F -
K_\pi\rfloor-G_D $.  By Proposition \ref{Prop3} we have $G_D=G_{D'}$
and $\lambda=\lambda_D=\lambda_{D'}$. Since the ideals
$\J(\A^{\lambda-\varepsilon}) \supseteq I_{D'}$ are nested, their
corresponding antinef divisors satisfy $D_{\lambda-\varepsilon}
\leqslant D'$ and they reach the same minimum
$\lambda_{D_{\lambda-\varepsilon}}=\lambda_{D'}=\lambda$. Hence,
$S_{D'} \subseteq S_{D_{\lambda-\varepsilon}}$ which implies
$G_D=G_{D'} \leqslant G_\lambda$ as we wanted.
 \end{proof}

\begin{corollary}\label{min_min2}
Let $\lambda$ be a jumping number of an  ideal $\fa \subseteq
\cO_{X,O}$. Then we have $G \leqslant G_{\lambda}$ for any critical
divisor $G$ associated to $\lambda$.
\end{corollary}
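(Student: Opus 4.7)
The plan is to derive this as an immediate consequence of Proposition \ref{min_min} combined with Corollary \ref{critical_min}, by producing an antinef divisor $D$ such that $G_D = G$ and $\lambda_D = \lambda$.

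Concretely, given a critical divisor $G$ associated to $\lambda$, I would let $D$ be the antinef closure of $\lfloor \lambda F - K_\pi \rfloor - G$. Since $G$ is a contributing divisor associated to $\lambda$, we get a nested ideal
\[
\J(\A^{\lambda-\varepsilon}) \supseteq I_D = \pi_*\cO_{X'}(\lceil K_\pi - \lambda F\rceil + G) \varsupsetneq \J(\A^{\lambda}).
\]
By Corollary \ref{critical_min}, criticality of $G$ implies $G_D = G$. Moreover, the existence of a contributor to $\lambda$ with associated ideal $I_D$ forces $\lambda_D = \lambda$ (as noted in the discussion following the definition of $\lambda_D$: whenever a complete ideal $I_D$ is strictly sandwiched between two consecutive multiplier ideals $\J(\A^{\lambda-\varepsilon}) \supseteq I_D \varsupsetneq \J(\A^{\lambda})$, then $\lambda = \lambda_D$).

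With $D$ antinef and $\lambda = \lambda_D$, Proposition \ref{min_min} applies directly and yields $G_D \leqslant G_\lambda$. Combining with $G_D = G$ gives $G \leqslant G_\lambda$, which is exactly what we want. The argument is essentially a bookkeeping exercise linking the two perspectives on contributing divisors (Tucker's critical divisors and the Hyry--J\"arvilehto antinef parametrization), so there is no real obstacle — the only point requiring minor care is checking that the hypotheses of Proposition \ref{min_min} are met, namely that $\lambda_D$ equals the given jumping number $\lambda$, which follows from the strict nesting $I_D \varsupsetneq \J(\A^\lambda)$ ensured by the fact that $G$ contributes.
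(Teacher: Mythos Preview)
Your proposal is correct and follows exactly the route the paper intends: the corollary is stated without proof immediately after Proposition~\ref{min_min}, and the implicit argument is precisely to take $D$ the antinef closure of $\lfloor \lambda F - K_\pi\rfloor - G$, invoke Corollary~\ref{critical_min} to get $G_D = G$, note that the nesting forces $\lambda_D = \lambda$, and then apply Proposition~\ref{min_min}.
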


The reduced sum of all critical divisors equals the jumping divisor
$G_{\lambda}$ for simple complete ideals (see \cite[Thm. 2.3]{GM10}
for the smooth case). However this is no longer true in general.

\begin{example}  \label{ex_jump}
Let $X$ be a smooth surface and consider the $\fM$-primary ideal
$\fa \subseteq \cO_{X,O}$ whose dual graph is

\begin{center}
\begin{tabular}{ccc}

\begin{tikzpicture}[scale=0.9]
   \draw  (-3,0) -- (2,0);
   \draw [dashed,->,thick] (1,0) -- (1.75,1);
   \draw [dashed,->,thick] (1,0) -- (2,0.75);
   \draw [dashed,->,thick] (-2,0) -- (-2.75,1);
   \draw [dashed,->,thick] (-2,0) -- (-3,0.75);
   \draw (-.2,-0.3) node {{\tiny $E_1$}};
   \draw (-1.2,-0.3) node {{\tiny $E_2$}};
   \draw (-3.2,-0.3) node {{\tiny $E_3$}};
   \draw (-2.2,-0.3) node {{\tiny $E_4$}};
   \draw (1.8,-0.3) node {{\tiny $E_5$}};
   \draw (0.8,-0.3) node {{\tiny $E_6$}};
   \filldraw  (0,0) circle (2pt)
              (-1,0) circle (2pt)
              (-3,0) circle (2pt)
              (2,0) circle (2pt);
   \filldraw  [fill=white]  (-2,0) circle (3pt)
              (1,0) circle (3pt);
 \end{tikzpicture}&
   \begin{tikzpicture}[scale=0.9]
   \draw  (-3,0) -- (2,0);
   \draw [dashed,->,thick] (1,0) -- (1.75,1);
   \draw [dashed,->,thick] (1,0) -- (2,0.75);
   \draw [dashed,->,thick] (-2,0) -- (-2.75,1);
   \draw [dashed,->,thick] (-2,0) -- (-3,0.75);
   \draw (-.2,-0.3) node {$1$};
   \draw (-1.2,-0.3) node {$2$};
   \draw (-3.2,-0.3) node {$3$};
   \draw (-2.2,-0.3) node {$6$};
   \draw (1.8,-0.3) node {$2$};
   \draw (0.8,-0.3) node {$4$};
   \filldraw  (0,0) circle (2pt)
              (-1,0) circle (2pt)
              (-3,0) circle (2pt)
              (2,0) circle (2pt);
   \filldraw  [fill=white]  (-2,0) circle (3pt)
              (1,0) circle (3pt);
 \end{tikzpicture}
  &
  \begin{tikzpicture}[scale=0.9]
   \draw  (-3,0) -- (2,0);
   \draw [dashed,->,thick] (1,0) -- (1.75,1);
   \draw [dashed,->,thick] (1,0) -- (2,0.75);
   \draw [dashed,->,thick] (-2,0) -- (-2.75,1);
   \draw [dashed,->,thick] (-2,0) -- (-3,0.75);
   \draw (-.2,-0.3) node {$8$};
   \draw (-1.2,-0.3) node {$12$};
   \draw (-3.2,-0.3) node {$14$};
   \draw (-2.2,-0.3) node {$28$};
   \draw (1.8,-0.3) node {$10$};
   \draw (0.8,-0.3) node {$20$};
   \filldraw  (0,0) circle (2pt)
              (-1,0) circle (2pt)
              (-3,0) circle (2pt)
              (2,0) circle (2pt);
   \filldraw  [fill=white]  (-2,0) circle (3pt)
              (1,0) circle (3pt);
 \end{tikzpicture}

\\
Vertex ordering & $K_\pi$& $F$
\end{tabular}
\end{center}

The multiplier ideals corresponding to the consecutive jumping
numbers $\frac{5}{7} < \frac{3}{4}$ are:

\begin{center}
\begin{tabular}{ccc}

\begin{tikzpicture}[scale=0.9]
   \draw  (-3,0) -- (2,0);
   \draw [dashed,->,thick] (1,0) -- (1.75,1);
   \draw [dashed,->,thick] (1,0) -- (2,0.75);
   \draw [dashed,->,thick] (-2,0) -- (-2.75,1);
   \draw [dashed,->,thick] (-2,0) -- (-3,0.75);
   \draw (-.2,-0.3) node {$4$};
   \draw (-1.2,-0.3) node {$6$};
   \draw (-3.2,-0.3) node {$7$};
   \draw (-2.2,-0.3) node {$14$};
   \draw (1.8,-0.3) node {$5$};
   \draw (0.8,-0.3) node {$10$};
   \filldraw  (0,0) circle (2pt)
              (-1,0) circle (2pt)
              (-3,0) circle (2pt)
              (2,0) circle (2pt);
   \filldraw  [fill=white]  (-2,0) circle (3pt)
              (1,0) circle (3pt);
 \end{tikzpicture}&
 &
  \begin{tikzpicture}[scale=0.9]
   \draw  (-3,0) -- (2,0);
   \draw [dashed,->,thick] (1,0) -- (1.75,1);
   \draw [dashed,->,thick] (1,0) -- (2,0.75);
   \draw [dashed,->,thick] (-2,0) -- (-2.75,1);
   \draw [dashed,->,thick] (-2,0) -- (-3,0.75);
   \draw (-.2,-0.3) node {$5$};
   \draw (-1.2,-0.3) node {$7$};
   \draw (-3.2,-0.3) node {$8$};
   \draw (-2.2,-0.3) node {$15$};
   \draw (1.8,-0.3) node {$6$};
   \draw (0.8,-0.3) node {$11$};
   \filldraw  (0,0) circle (2pt)
              (-1,0) circle (2pt)
              (-3,0) circle (2pt)
              (2,0) circle (2pt);
   \filldraw  [fill=white]  (-2,0) circle (3pt)
              (1,0) circle (3pt);
 \end{tikzpicture}
\\
$\J(\A^{\frac{5}{7}})$ & $\supseteq$ & $\J(\A^{\frac{3}{4}})$
\end{tabular}
\end{center}

The minimal jumping divisor corresponding to $\lambda=\frac{3}{4}$
is $G_{\frac{3}{4}}= E_1+E_2+E_4+E_6$ but the only critical divisors
are $E_4$ and $E_6$. In particular $$\J(\A^{\frac{5}{7}})
\varsupsetneq \pi_* \Oc_{X'}( \lceil K_{\pi} - \frac{3}{4} F\rceil +
E_4+E_6).$$ It is worth pointing out that
$$ \pi_* \Oc_{X'}(- D_{\frac{5}{7}} - E_4 - E_6)= \pi_* \Oc_{X'}(-D_{\frac{5}{7}} - G_{\frac{3}{4}}) = \J(\A^{\frac{3}{4}})$$
where $ D_{\frac{5}{7}}$ is the antinef closure of $ \lfloor
\frac{5}{7} F - K_{\pi} \rfloor$.  So minimality is not always
achieved for the divisor $G_\lambda$ in Proposition \ref{jump2}.
\end{example}

In general, not every nested ideal between two consecutive
multiplier ideals is given by a contributing divisor. The following
result identifies them precisely.

\begin{proposition}
Any nested ideal  $\J(\A^{\lambda-\varepsilon})\supseteq I_{D'}
\varsupsetneq   \J(\A^{\lambda})$ comes from a contributing divisor
$G$ associated to $\lambda$, i.e. $ I_{D'} =
\pi_*\Oc_{X'}\left(\left\lceil K_{\pi}- \lambda F\right\rceil +
G\right)$, if and only if $D'=D$ where $D$ is the antinef closure of
$\left\lfloor \lambda F - K_{\pi}\right\rfloor - G$ and in this case
$G=G_{D'}$.

\end{proposition}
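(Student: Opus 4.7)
The plan is to reduce both implications to the identity
$$\lceil K_\pi - \lambda F\rceil + G \;=\; -(\lfloor \lambda F - K_\pi\rfloor - G),$$
which holds because $G$ is an integral reduced divisor, combined with the equivalence between a $\bQ$-divisor and its antinef closure (Proposition \ref{equiv_antinef}) and the Lipman correspondence (Theorem 2.4). These tools convert the statement about sheaf cohomology of ceilings into a statement about antinef closures, which is exactly the language of $D$.

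For the forward implication, I would assume $I_{D'} = \pi_*\Oc_{X'}(\lceil K_\pi - \lambda F\rceil + G)$ for some contributing divisor $G$ and let $D$ denote the antinef closure of $\lfloor \lambda F - K_\pi\rfloor - G$. Rewriting the ideal using the identity above and then applying Proposition \ref{equiv_antinef} gives
$$I_{D'} \;=\; \pi_*\Oc_{X'}\bigl(-(\lfloor \lambda F - K_\pi\rfloor - G)\bigr) \;=\; \pi_*\Oc_{X'}(-D) \;=\; I_D.$$
Since both $D$ and $D'$ are antinef and they define the same complete ideal, the one-to-one correspondence of Theorem 2.4 yields $D=D'$. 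Conversely, if $D' = D$ for the antinef closure of $\lfloor \lambda F - K_\pi\rfloor - G$, the same chain of equalities, read in reverse, delivers $I_{D'} = \pi_*\Oc_{X'}(\lceil K_\pi - \lambda F\rceil + G)$; the hypothesis $I_{D'}\varsupsetneq \J(\A^\lambda)$ then certifies that $G$ genuinely contributes to $\lambda$.

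Finally, to justify the assertion $G = G_{D'}$ in this case, I would first note that the strict nesting $\J(\A^{\lambda-\varepsilon}) \supseteq I_{D'} \varsupsetneq \J(\A^\lambda)$ forces $\lambda_{D'} = \lambda$, so $G_{D'}$ is well defined in the sense of Section \S\ref{mini}. Applying Theorem \ref{minimal_contributing} to the contributing divisor $G$, together with $D = D'$, one obtains $G_D = G_{D'}$ and the identity $I_{D'} = \pi_*\Oc_{X'}(\lceil K_\pi - \lambda F\rceil + G_{D'})$, exhibiting $G_{D'}$ as the canonical (minimal) contributing divisor producing $I_{D'}$. The only subtle point is the bookkeeping of the ceiling–floor identity and the remark that the nested hypothesis guarantees $\lambda_{D'} = \lambda$; the remainder is a direct application of Proposition \ref{equiv_antinef} and Theorem \ref{minimal_contributing}.
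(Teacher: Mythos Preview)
Your argument is correct and close in spirit to the paper's. The equivalence $I_{D'}=\pi_*\Oc_{X'}(\lceil K_\pi-\lambda F\rceil+G)\Longleftrightarrow D'=D$ is indeed immediate from the ceiling--floor identity, Proposition~\ref{equiv_antinef}, and the Lipman bijection, exactly as you write; the paper's first sentence (``Let $D'$ be the antinef closure of $\lfloor\lambda F-K_\pi\rfloor-G$'') is making the same observation. Where you differ is in the justification that $G_{D'}$ itself works: the paper argues directly with Propositions~\ref{G_DleqG} and~\ref{Prop3} to obtain $D\leqslant D'$ and $D'\leqslant D$ for the antinef closure of $\lfloor\lambda F-K_\pi\rfloor-G_{D'}$, whereas you invoke Theorem~\ref{minimal_contributing}, whose proof packages those same two propositions. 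So the two routes are essentially equivalent; yours is a bit more streamlined by quoting the theorem rather than reproving its content.

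One point worth flagging: the clause ``in this case $G=G_{D'}$'' cannot be read literally, since the given $G$ may strictly contain $G_{D'}$ while still defining $I_{D'}$ (Theorem~\ref{minimal_contributing} only says $G_{D'}$ is the \emph{minimal} such divisor). You handle this correctly by interpreting it as ``$G_{D'}$ is the canonical (minimal) contributing divisor producing $I_{D'}$''; that is also the sense in which the paper's proof establishes it.
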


\begin{proof}
Let $D'$ be the antinef closure of $\lfloor\lambda F -
K_\pi\rfloor-G $. By Proposition \ref{Prop3} we have $D\leqslant
D'$. On the other hand, Proposition \ref{G_DleqG} implies
$G_{D'}\leqslant G$ which gives
$$\lfloor\lambda F - K_\pi\rfloor-G \leqslant \lfloor\lambda F - K_\pi\rfloor-G_{D'}$$ and hence $D'\leqslant D$
so we get the desired result. The reverse implication is
straightforward.
\end{proof}

\begin{proposition}
Let $I_D$ be the ideal associated to an antinef divisor $D\in
\Lambda$. Then, $I_D$ is a multiplier ideal for the  ideal $\fa
\subseteq \cO_{X,O}$ if and only if $D$ is contained in the antinef
closure of $ \lfloor(\lambda_D -\varepsilon) F - K_\pi\rfloor$. If
this is the case, $D$ is also the antinef closure of
$\lfloor\lambda_D F - K_\pi\rfloor - G_D$.
\end{proposition}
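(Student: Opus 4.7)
The plan is to reduce everything to a single background observation about antinef divisors, which I would establish first. For any antinef $D = \sum d_i E_i \in \Lambda$ the chain
$$\J(\A^{\lambda_D - \varepsilon}) \supseteq I_D \varsupsetneq \J(\A^{\lambda_D})$$
always holds for $\varepsilon > 0$ small. The first inclusion is equivalent, via Proposition~\ref{semicont}, to $D_{\lambda_D - \varepsilon} \leqslant D$; this follows because the definition of $\lambda_D$ as a minimum forces $\lfloor (\lambda_D - \varepsilon) e_i - k_i \rfloor \leqslant d_i$ for every $i$, so $\lfloor (\lambda_D - \varepsilon) F - K_\pi \rfloor \leqslant D$, and then the minimality of the antinef closure (Lemma~\ref{def_antinef}) upgrades this to $D_{\lambda_D - \varepsilon} \leqslant D$. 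The strict inclusion on the right is witnessed by any $i \in S_D$, where $\lfloor \lambda_D e_i - k_i \rfloor = d_i + 1 > d_i$. In particular, $\lambda_D$ is always a jumping number of $\fa$.

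Once this is in hand, the biconditional reduces to a two-line antisymmetry argument. The hypothesis $D \leqslant D_{\lambda_D - \varepsilon}$, combined with the always-valid reverse inequality $D_{\lambda_D - \varepsilon} \leqslant D$, forces $D = D_{\lambda_D - \varepsilon}$, and therefore $I_D = \J(\A^{\lambda_D - \varepsilon})$ is a multiplier ideal. Conversely, if $I_D$ is a multiplier ideal then, since it is squeezed between the two consecutive multiplier ideals bracketing the jumping number $\lambda_D$ with strict inclusion on the right, the only possibility is $I_D = \J(\A^{\lambda_D - \varepsilon})$; by the antinef--complete ideal correspondence this is equivalent to $D = D_{\lambda_D - \varepsilon}$.

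For the final assertion, I would apply Proposition~\ref{Prop3} with $D' := D$ and $\lambda := \lambda_D$: the antinef closure $D^{*}$ of $\lfloor \lambda_D F - K_\pi \rfloor - G_D$ then satisfies $D^{*} \leqslant D$ and $\lambda_{D^{*}} = \lambda_D$. The preliminary observation applied to $D^{*}$ yields $\J(\A^{\lambda_D - \varepsilon}) = \J(\A^{\lambda_{D^{*}} - \varepsilon}) \supseteq I_{D^{*}}$, while $D^{*} \leqslant D$ gives $I_{D^{*}} \supseteq I_D = \J(\A^{\lambda_D - \varepsilon})$, so $I_{D^{*}} = I_D$ and the antinef-closure correspondence forces $D^{*} = D$. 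I do not anticipate a genuinely hard step here; the only point requiring care is the sandwich argument in the second paragraph, namely that the discreteness of the chain of multiplier ideals, together with the strict inclusion on the right of the preliminary observation, leaves $\J(\A^{\lambda_D - \varepsilon})$ as the only candidate for $I_D$ among multiplier ideals.
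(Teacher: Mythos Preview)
Your approach is essentially the paper's: establish $\J(\A^{\lambda_D-\varepsilon})\supseteq I_D$ and $I_D\not\subseteq\J(\A^{\lambda_D})$, conclude that the only multiplier ideal $I_D$ can equal is $\J(\A^{\lambda_D-\varepsilon})$, then translate this into the divisor condition via the antinef correspondence. For the last assertion the paper invokes Theorem~\ref{minimal_contributing}, while you go directly through Proposition~\ref{Prop3}; since Theorem~\ref{minimal_contributing} is itself built on Proposition~\ref{Prop3}, this is not a genuinely different route.

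There is one imprecision. Your preliminary chain $\J(\A^{\lambda_D-\varepsilon})\supseteq I_D\varsupsetneq\J(\A^{\lambda_D})$ is stated too strongly: in general $I_D$ need \emph{not} contain $\J(\A^{\lambda_D})$ (the paper points this out explicitly just before Remark in \S\ref{mini}). Your witness $i\in S_D$ only shows $d_i<e_i^{\lambda_D}$, i.e.\ $I_D\not\subseteq\J(\A^{\lambda_D})$, not the reverse containment. Fortunately this weaker statement is all you need: combined with $I_D\subseteq\J(\A^{\lambda_D-\varepsilon})$, it already forces $I_D=\J(\A^{\lambda_D-\varepsilon})$ among multiplier ideals, since every multiplier ideal contained in $\J(\A^{\lambda_D-\varepsilon})$ other than $\J(\A^{\lambda_D-\varepsilon})$ itself is contained in $\J(\A^{\lambda_D})$. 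So replace ``squeezed between\ldots with strict inclusion on the right'' by this non-containment argument and the proof goes through.
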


\begin{proof}
By definition, we have  $ \lfloor(\lambda_D -\varepsilon) F -
K_\pi\rfloor \leqslant D$ because $\J(\A^{\lambda_D-\varepsilon})
\supseteq I_D$. We also have $I_D \not\subseteq \J(\A^{\lambda_D})$
so the only possibility for $I_D$ of being a multiplier ideal is
when $\J(\A^{\lambda_D-\varepsilon}) = I_D$ so, applying Lemma
\ref{Lem1}, $D$ must be contained in the antinef closure of$
\lfloor(\lambda_D -\varepsilon) F - K_\pi\rfloor$. The rest of the
statement follows from Theorem \ref{minimal_contributing}.
\end{proof}


\end{document}